\documentclass[11pt]{amsart}
\usepackage{preamble}

\usepackage[headheight=15pt, headsep=15pt, footskip=27pt, bottom=2.4cm, left=2.4cm, right=2.4cm]{geometry}

\begin{document}

\title[On the relative Morrison-Kawamata cone conjecture (II)]{On the relative Morrison-Kawamata cone conjecture (II)}

\subjclass[2020]{14E30}

\begin{abstract}
Assuming the Morrison-Kawamata cone conjecture for the generic fiber of a Calabi-Yau fibration and the abundance conjecture, we show (1) the finiteness of minimal models, (2) the existence of a weak rational polyhedral fundamental domain under the action of birational automorphism groups, and (3) the finiteness of varieties as targets of contractions. As an application, the finiteness of minimal models and the weak Morrison-Kawamata cone conjecture in relative dimensions $\leq 2$ are established.
\end{abstract}

\author{Zhan Li}
\address[]{Department of Mathematics, Southern University of Science and Technology, 1088 Xueyuan Rd, Shenzhen 518055, China} \email{lizhan.math@gmail.com, lizhan@sustech.edu.cn}

\maketitle

\tableofcontents

\section{Introduction}

Compared with varieties of general type and Fano varieties, the birational geometry of Calabi-Yau varieties (those with trivial canonical divisors) poses the most challenge to study. The standard conjecture of the minimal model program predicts that a variety with intermediate Kodaira dimension birationally admits a Calabi-Yau fibration. The birational geometry of Calabi-Yau fibrations is largely prescribed by the Morrison-Kawamata cone conjecture \cite{Mor93, Mor96, Kaw97, Tot09}.

Let $\Gamma_B$ and $\Gamma_A$ be the images of the pseudo-automorphism group $\PsAut(X/S,\De)$ and the automorphism group $\Aut(X/S,\De)$ under the natural group homomorphism $\PsAut(X/S,\De) \to {\rm GL}(N^1(X/S)_\Rr)$, respectively.

\begin{conjecture}[Morrison-Kawamata cone conjecture]\label{conj: KM conj}
Let $(X, \De) \to S$ be a klt Calabi-Yau fiber space. 
\begin{enumerate}
\item The cone $\Mov(X/S)_+$ has a (weak) rational polyhedral fundamental domain under the action of $\Gamma_B$, and there are finitely many minimal models of $(X/S, \De)$ up to isomorphisms.
\item The cone $\Amp(X/S)_+$ has a  (weak) rational polyhedral fundamental domain under the action of $\Gamma_A$, and there are finitely many contractions from $X/S$ up to isomorphisms.
\end{enumerate}
\end{conjecture}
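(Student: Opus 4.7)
Given the paper's standing assumptions—the Morrison--Kawamata cone conjecture on the generic fiber and the abundance conjecture—the natural strategy is to reduce the relative statement on $X/S$ to the absolute one on $(X_\eta,\De_\eta)$. The first step is to analyze the restriction map $r\colon N^1(X/S)_\Rr \to N^1(X_\eta)_\Rr$, whose kernel $K$ is generated by classes supported on vertical divisors over $S$. I would show that $r$ sends $\Mov(X/S)_+$ into $\Mov(X_\eta)_+$ and $\Amp(X/S)_+$ into $\Amp(X_\eta)_+$, and conversely that given a movable (resp.\ ample) class on $X_\eta$, one can lift it modulo $K$ to a class that lies in $\Mov(X/S)_+$ (resp.\ $\Amp(X/S)_+$). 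This essentially decouples the problem into a horizontal part controlled by $X_\eta$ and a vertical part controlled by $S$.

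Next, I would lift the group action. For $g \in \PsAut(X_\eta,\De_\eta)$, spread out to an open $U \subset S$, take closure of the graph to get a birational self-map of $X/S$, then run a relative $(K_X+\De)$-MMP, which terminates at a good minimal model by abundance. Uniqueness of minimal models in codimension one would then identify the output with $X$ itself, yielding a lift in $\PsAut(X/S,\De)$; this produces a homomorphism $\Gamma_B(X_\eta) \to \Gamma_B(X/S)$ compatible with $r$, with finite cokernel coming from permutations of vertical components. Combining a rational polyhedral fundamental domain $\Pi_\eta$ for $\Gamma_B(X_\eta)$ on $\Mov(X_\eta)_+$, provided by the absolute conjecture, with a finite chamber in $K$, I would assemble a fundamental domain $\Pi \subset \Mov(X/S)_+$. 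Finiteness of minimal models of $(X/S,\De)$ follows: each restricts to one of the finitely many minimal models of $(X_\eta,\De_\eta)$, and its relative completion over $S$ is then finite by the vertical bookkeeping. Part (2) follows by the same template with $\Amp$, $\Gamma_A$, and contractions replacing $\Mov$, $\Gamma_B$, and pseudo-automorphisms.

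The main obstacle is the lifting step. Spreading out and taking closure only give a birational self-map of $X/S$; being an isomorphism in codimension one over $S$ requires running a relative MMP whose termination relies on abundance, and identifying the output with $X$ requires a careful comparison of ample models—a delicate matter when $\De$ has nontrivial vertical components. Even with this in hand, verifying that $\Pi$ is a genuine \emph{fundamental} domain (distinct $\Gamma_B$-translates having disjoint interiors) requires controlling the vertical action so that it does not merge distinct horizontal orbits. These obstructions are, I suspect, precisely what the paper's three main theorems advertised in the abstract—finiteness of minimal models, existence of a weak rational polyhedral fundamental domain, and finiteness of contraction targets—are designed to address under the stated hypotheses.
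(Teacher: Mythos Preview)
The statement you are attempting to prove is Conjecture~\ref{conj: KM conj}, which the paper does \emph{not} prove; it is stated as an open conjecture. The paper's main results (Theorems~\ref{thm: main 1}, \ref{thm: main 2}, \ref{thm: finite contractions}, \ref{thm: weak fundamental domains}) establish only the \emph{weak} fundamental domain and the finiteness statements in part~(1), and only under the extra hypotheses that $(X,\De)$ is terminal, the generic fiber satisfies Conjecture~\ref{conj: shokurov polytope}, and good minimal models exist in dimension $\dim(X/S)$. Part~(2) on $\Amp(X/S)_+$ is not treated. So there is no ``paper's own proof'' to compare against; what can be compared is your strategy versus the paper's route to its partial results.

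On that comparison, your proposal has a substantive gap at the first step. You assert that the kernel $K$ of $r\colon N^1(X/S)\to N^1(X_\eta)$ is generated by classes of vertical divisors. This holds only when $R^1f_*\Oo_X=0$; in general, if $D_\eta\equiv 0$ on $X_\eta$ but $D_\eta\not\sim_\Qq 0$ (which occurs whenever $\bPic^0(X_\eta)$ is positive-dimensional), then $[D]\in K$ is not represented by any vertical class. This is precisely the obstacle the paper is written to overcome (see the paragraph following Corollary~\ref{cor: coKodaira leq 2} in the introduction). The paper's remedy is Theorem~\ref{thm: num to linear}: using that $\bAut^0(X_\eta,\De_\eta)$ is an abelian variety (Theorem~\ref{thm: Xu}) surjecting onto the relevant $\bPic^0$, one finds $h\in\Aut^0(X_\eta,\De_\eta)$ with $h\cdot B_\eta-B_\eta\sim \ell\,\pi_\eta^*\xi$ for any numerically trivial $\xi$, thereby converting numerical equivalence on the generic fiber into linear equivalence up to the $\Aut^0$-action. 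This is then globalized across a rational polyhedral cone in Step~4 of the proof of Theorem~\ref{thm: main 1}. Your ``vertical bookkeeping'' does not see this phenomenon and would already fail for an abelian or elliptic fibration with nontrivial $\bPic^0$ on the fibers---exactly the cases \cite{Kaw97} and \cite{FHS21} had to handle by special arguments and which the present paper treats uniformly.
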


Note that there are various versions of the Morrison-Kawamata cone conjecture  (see \cite{LOP18}), and the above version is most relevant to this paper. Besides, we only study the first part of the conjecture in this paper as it is the part that concerns the birational geometry of Calabi-Yau fibrations. For simplicity, we refer to the first part of this conjecture as the (weak) cone conjecture for movable cones, or simply the cone conjecture.

In our recent work \cite{LZ22}, we approach the cone conjecture from the perspective of Shokurov polytopes. We propose a more tractable conjecture and investigate its relationship with the cone conjecture. 

\begin{conjecture}[{\cite{LZ22}}]\label{conj: shokurov polytope}
Let $f: (X, \De) \to S$ be a klt Calabi-Yau fiber space. 
\begin{enumerate}
\item There exists a polyhedral cone $P_M \subset \Eff(X/S)$ such that
\[
\PsAut(X/S, \De)\cdot P_M \supset  \Mov(X/S).
\]
\item There exists a polyhedral cone  $P_A \subset \Eff(X/S)$ such that 
\[
\Aut(X/S, \De)\cdot P_A \supset  \Amp(X/S).
\]
\end{enumerate}
\end{conjecture}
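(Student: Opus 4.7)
The plan is to construct $P_M$ as a rational polyhedral cone containing the nef cones (pulled back to $X$) of finitely many small $\mathbb{Q}$-factorial modifications (SQMs) of $X/S$, and then to argue that every movable class is carried into $P_M$ by some pseudo-automorphism. I describe the approach for part (1); part (2) is analogous, with automorphisms and ample cones replacing pseudo-automorphisms and nef cones on SQMs.

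For the construction of $P_M$, let $\Pi$ be a rational polytope of effective $\mathbb{R}$-divisors $\De'$ such that each $(X,\De+\De')$ is klt and the classes $[\De']$ span $N^1(X/S)_\Rr$. By the finiteness of log minimal models across a rational polytope of boundaries (Birkar--Cascini--Hacon--McKernan), only finitely many SQMs $\phi_i: X \dashrightarrow X_i$ over $S$ arise as $(K_X+\De+\De')$-minimal models for $\De' \in \Pi$. Take $P_M \subset \Eff(X/S)$ to be a rational polyhedral cone containing $\bigcup_i (\phi_i^{-1})_* \Nef(X_i/S)$.

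For the covering, given $\alpha \in \Mov(X/S)$ with representative $D$, run a $D$-MMP over $S$ with scaling of an ample divisor. Since $K_X+\De$ is numerically trivial over $S$ and $D$ is movable, this MMP consists of flops alone and, granting termination in the klt Calabi-Yau setting, yields an SQM $\phi: X \dashrightarrow X^+$ on which $\phi_*D$ is relatively nef. One then wants an identification $X^+ \cong X_i$ (for some $i$) extending $\phi$, so that $\phi_i^{-1}\circ\phi \in \PsAut(X/S,\De)$ sends $\alpha$ into $\Nef(X_i/S) \subset P_M$.

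The main obstacle is precisely this last identification. A priori, the SQM $X^+$ need not coincide with any $X_i$, because the boundary effectively driving the MMP may fall outside $\Pi$; one must either enlarge $\Pi$ to absorb all relevant boundaries (which demands a uniform control over the SQMs appearing) or invoke the finiteness of minimal models up to $\PsAut(X/S,\De)$ directly. In this sense Conjecture \ref{conj: shokurov polytope} is essentially a repackaging of the $\PsAut$-finiteness of minimal models, and any serious attack will require the structural inputs announced in the abstract: the Morrison--Kawamata cone conjecture for the generic fiber, together with abundance, which together permit one to descend the desired finiteness from the fiber to the total space.
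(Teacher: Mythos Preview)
The statement you are attempting to prove is a \emph{conjecture} in the paper, not a theorem: it is cited from \cite{LZ22} as an open problem, and the paper does not claim or give a proof of it. There is therefore no proof in the paper to compare your proposal against. What the paper does prove are conditional and rational versions (Theorems~\ref{thm: main 1}--\ref{thm: weak fundamental domains}), all of which \emph{assume} the analogue of Conjecture~\ref{conj: shokurov polytope} for the generic fiber together with the existence of good minimal models in the relevant relative dimension, and then lift this to a statement about $\Mov(X/S)_\Qq$ rather than the full $\Mov(X/S)$.

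Your proposal is not a proof, and you have correctly located the gap yourself. The BCHM finiteness you invoke only produces finitely many SQMs $X_i$ associated to boundaries in a fixed rational polytope $\Pi$; it does not say that every SQM of $X/S$ is isomorphic to one of these. The step ``one then wants an identification $X^+ \cong X_i$'' is precisely the content of the conjecture, equivalent (as you observe) to the finiteness of minimal models up to $\PsAut(X/S,\De)$. There is also a smaller issue earlier: taking $P_M$ to be a rational polyhedral cone in $\Eff(X/S)$ containing $\bigcup_i (\phi_i^{-1})_*\Nef(X_i/S)$ is not obviously possible, since the nef cones need not be polyhedral and their closures need not lie in $\Eff(X/S)$ (only in its closure).

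Your final paragraph is an accurate summary of the paper's strategy: the paper does not resolve the obstacle you identify in general, but rather assumes it away on the generic fiber and then, via the group-scheme arguments of Section~\ref{sec: homogeneous fibrations} and the convex geometry of Section~\ref{sec: Geometry of convex cones}, descends the conclusion to the total space.
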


Using results of \cite{Loo14} and assuming standard conjectures of the minimal model program (MMP), \cite{LZ22} showed that Conjecture \ref{conj: shokurov polytope} is almost equivalent to the cone conjecture when $R^1f_*\Oo_X=0$ (i.e., trivial relative irregularity). In fact, if $S$ is a point, then they are indeed equivalent. Furthermore, assuming trivial relative irregularity, we establish the weak Morrison-Kawamata cone conjecture and the finiteness of minimal models, assuming the abundance conjecture and the cone conjecture for the generic fiber.

However, from the Beauville-Bogomolov decomposition of varieties with trivial canonical divisors, varieties with trivial irregularities and non-trivial irregularities may have drastic geometry. In fact, people used to reserve Calabi-Yau varieties only for varieties with trivial canonical divisors and $h^i(\Oo_X)=0, 1\leq i \leq \dim X$. Nonetheless, such an exclusion is neither reasonable from a birational geometry perspective nor practical from a technical standpoint (the cone conjecture is sensitive to finite covers). This paper tackles the general case without assuming trivial relative irregularity. Our starting point is the following result.

\begin{theorem}\label{thm: main 1}
Let $f: (X,\De) \to S$ be a terminal Calabi-Yau fiber space. Assume that good minimal models exist for effective klt pairs in dimension $\dim(X/S)$. If there exists a polyhedral cone $P_\eta \subset \Eff(X_\eta)$ such that 
\[
\PsAut(X_\eta, \De_\eta)\cdot P_\eta \supset  \Mov(X_\eta),
\] then there exists a rational polyhedral cone $Q\subset \Mov(X/S)$ such that
\[
\PsAut(X,\De) \cdot (Q \cap N^1(X/S)_\Qq)= \Mov(X/S)_\Qq.
\]
\end{theorem}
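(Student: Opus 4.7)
The strategy is to build $Q$ by lifting $P_\eta$ to $N^1(X/S)_\Rr$ in a Shokurov-polytope-friendly way, then to cover $\Mov(X/S)_\Qq$ by $\Gamma_B$-orbits of rational classes in $Q$ via a two-step reduction: first apply the hypothesis on the generic fiber, then extend the resulting pseudo-automorphism of $X_\eta$ to one of $X/S$ using MMP run on a suitable model, which terminates by the good-minimal-model assumption.

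\textbf{Construction of $Q$.} Let $\rho:N^1(X/S)_\Rr \to N^1(X_\eta)_\Rr$ be the restriction map. Choose a rational section and lift the finitely many rational extremal generators of $P_\eta$ to rational classes in $N^1(X/S)_\Qq$, obtaining a rational polyhedral cone $\widetilde P$ with $\rho(\widetilde P)=P_\eta$. Enlarge it to $P:=\widetilde P+K_0$ by adding a rational polyhedral cone $K_0 \subset \ker\rho$ of $f$-vertical classes, chosen large enough that every $\Gamma_B$-orbit on the rational classes in $\ker\rho\cap\Mov(X/S)$ meets $K_0$ (finite-dimensional plus a finiteness coming from the action on vertical components of reducible fibers). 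After a harmless translation by a small rational ample one may assume $P\subset\Eff(X/S)$. By the Shokurov-polytope machinery, available under the hypothesis that good minimal models exist for effective klt pairs of dimension $\dim(X/S)$, the polytope $P\cap\Mov(X/S)_+$ decomposes into finitely many chambers $\phi_i^*\Nef(X_i/S)\cap P$ indexed by minimal models $\phi_i:X\dashrightarrow X_i$, each of which is a pseudo-isomorphism because a $D$-MMP with $D$ movable contracts no divisor. Set $Q:=P\cap\Mov(X/S)_+$; this is a rational polyhedral cone in $\Mov(X/S)$.

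\textbf{Covering $\Mov(X/S)_\Qq$.} Let $D\in\Mov(X/S)_\Qq$ and write $D_\eta:=D|_{X_\eta}\in\Mov(X_\eta)_\Qq$. The hypothesis furnishes some $\psi_\eta\in\PsAut(X_\eta,\De_\eta)$ with $\psi_\eta^*[D_\eta]\in P_\eta$. The rational self-map $\psi_\eta$ extends uniquely to a birational self-map $\overline\psi:X\dashrightarrow X$ over $S$; it is a pseudo-isomorphism over the generic point of $\eta$ but may have divisorial defects over codimension-one points of $S$. To repair this, take a $\Qq$-factorial common resolution $W$ of the graph of $\overline\psi$ and run a $(K_W+\De_W)$-MMP over $X$ with scaling by an ample, which terminates by the good-minimal-model assumption and, by genericity, produces only flips over a neighborhood of $\eta$; the output yields a genuine $\psi\in\PsAut(X,\De)$ with $\psi|_{X_\eta}=\psi_\eta$. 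Then $\psi^*[D]\in\widetilde P+\ker\rho$, and a further adjustment by the $\Gamma_B$-orbit in $\ker\rho$ (absorbed by the choice of $K_0$) places $\psi^*[D]$ in $Q$.

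\textbf{Main obstacle.} The principal hurdle is the second step of the covering argument: extending $\psi_\eta$ from the generic fiber to a pseudo-automorphism of $X/S$. In the case of trivial relative irregularity treated in \cite{LZ22} this extension is essentially automatic, whereas in general the $f$-vertical part of $N^1(X/S)$ is nontrivial and one must control divisorial behavior of $\overline\psi$ over codimension-one points of $S$; this is precisely where the assumed existence of good minimal models is used, both to run the repairing MMP and to apply the Shokurov polytope decomposition on $P$.
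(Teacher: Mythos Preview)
There is a genuine gap. Your argument hinges on the claim that once $\psi^*[D]\in\widetilde P+\ker\rho$, a further $\Gamma_B$-translate inside $\ker\rho$ lands in $K_0$, where $K_0$ is a rational polyhedral cone of \emph{$f$-vertical} classes. But $\ker\rho$ is \emph{not} spanned by vertical divisors when $R^1f_*\Oo_X\neq 0$: a class $[D]$ lies in $\ker\rho$ iff $D_\eta\equiv 0$ on $X_\eta$, and when the generic fiber has positive irregularity there are numerically trivial line bundles on $X_\eta$ that are not $\Qq$-linearly trivial, so the corresponding $[D]$ is not a vertical class (even modulo vertical classes). Thus the choice of $K_0$ cannot absorb this discrepancy, and your covering step fails exactly in the case the theorem is designed to handle. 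The paper's proof addresses this precisely: writing $V$ for the span of vertical divisors, the fiber $\pi^{-1}([B_\eta])$ of $N^1(X/S)/V\to N^1(X_\eta)$ over $[B_\eta]$ is an affine subspace $[B]+N$, and one uses the action of $\Aut^0(X_\eta,\De_\eta)$ (an abelian variety, by results of Brion and Xu) to tile $[B]+N$ by translates of a rational polytope, via the key ``numerical-to-linear'' result that for suitable $B$ and any $\xi\equiv 0$ on the appropriate base there is $h\in\Aut^0(X_\eta,\De_\eta)$ with $h\cdot B_\eta-B_\eta\sim \ell\,\pi_\eta^*\xi$. Globalizing this over the generators of $P_\eta$ produces the enlarged cone $\Pi$; only then does one apply the Shokurov-polytope machinery to cut out $Q\subset\Pi\cap\Mov(X/S)$.

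Conversely, the step you flag as the ``main obstacle'' is not one: since $(X,\De)$ is terminal with $K_X+\De\equiv 0/S$ nef, any birational self-map of $X/S$ is already a pseudo-automorphism (Lemma~\ref{le: bir=pseudoauto}), so $\psi_\eta\in\PsAut(X_\eta,\De_\eta)$ extends to $\psi\in\PsAut(X/S,\De)$ immediately, with no repairing MMP required. The good-minimal-model hypothesis is used elsewhere (Shokurov decomposition, semi-ampleness to pass to a contraction $Y\to Z$ where the numerical-to-linear argument is run), not for this extension.
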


In combination with techniques related to the Shokurov polytope, we deduce the following result:

\begin{theorem}\label{thm: main 2}
Let $f: (X,\De) \to S$ be a terminal Calabi-Yau fiber space. Assume that good minimal models exist for effective klt pairs in dimension $\dim(X/S)$. If there exists a rational polyhedral cone $P_\eta \subset \Eff(X_\eta)$ such that 
\[
\PsAut(X_\eta, \De_\eta)\cdot P_\eta \supset  \Mov(X_\eta),
\] then $(X/S,\De)$ has finite minimal models up to isomorphisms.
\end{theorem}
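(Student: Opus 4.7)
The plan is to use Theorem \ref{thm: main 1} to reduce the counting problem to a single rational polyhedral region of $\Mov(X/S)$, and then to invoke the Shokurov-polytope finiteness of log minimal models developed in \cite{LZ22}. Since a rational polyhedral cone is in particular polyhedral, Theorem \ref{thm: main 1} first supplies a rational polyhedral cone $Q \subset \Mov(X/S)$ satisfying $\PsAut(X,\De)\cdot(Q\cap N^1(X/S)_\Qq) = \Mov(X/S)_\Qq$. I would then use the Mori chamber description: each minimal model $\phi : X \dashrightarrow Y$ of $(X/S,\De)$ determines a chamber $C_\phi := \phi^*\overline{\Amp}(Y/S) \subset \overline{\Mov}(X/S)$, the interiors of distinct chambers are disjoint, and two chambers $C_{\phi_1},C_{\phi_2}$ lie in the same $\PsAut(X,\De)$-orbit exactly when the targets $Y_1,Y_2$ are $S$-isomorphic --- this uses that on a terminal Calabi-Yau fiber space any $S$-isomorphism $Y_1\cong Y_2$ lifts, via the small birational contractions $\phi_1,\phi_2$, to an element of $\PsAut(X,\De)$. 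Because the $\PsAut(X,\De)$-translates of $Q$ exhaust $\Mov(X/S)_\Qq$, every orbit of chambers contains a representative meeting $Q$ at a rational interior point, so finiteness of isomorphism classes of minimal models reduces to finiteness of chambers meeting $Q$.

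To bound the number of chambers meeting $Q$, I would run a Shokurov-polytope argument. A rational class $\alpha$ in the interior of a chamber $C_\phi \cap Q$ is represented by an effective $\Qq$-divisor since $Q \subset \Mov(X/S)\subset \Eff(X/S)$, and using $K_X+\De \equiv 0$ over $S$ one can rewrite $\alpha \equiv K_X+\De_\alpha$ for a klt boundary $\De_\alpha$ lying in a fixed rational polytope $\Pi \subset \Eff(X/S)$. The chamber $C_\phi$ then coincides with the pullback of the ample cone of the output of a $(K_X+\De_\alpha)$-MMP over $S$. Under the standing hypothesis that good minimal models exist for effective klt pairs in dimension $\dim(X/S)$, the finiteness of log minimal models as $\De_\alpha$ varies over a rational polytope --- a central result of \cite{LZ22} --- implies that only finitely many such chambers arise. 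Combined with the reduction above, this yields finiteness of isomorphism classes of minimal models of $(X/S,\De)$.

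The main obstacle is the Shokurov-polytope step: one must verify that every chamber meeting $Q$ genuinely arises as the ample cone of an MMP output for some klt pair $(X,\De_\alpha)$ with $\De_\alpha$ in a single rational polytope $\Pi$, so that polytope-finiteness of log minimal models applies uniformly. This requires packaging the perturbations $\De \leadsto \De_\alpha$ coherently across $Q$, and exploiting that $Q$ is rational polyhedral so that finitely many such $\Pi$'s suffice. A subsidiary delicacy is the $\PsAut$-equivariant identification of chambers with $S$-isomorphism classes in the first paragraph, which is standard but depends on the terminal Calabi-Yau hypothesis to ensure that birational comparisons of minimal models are small and hence lift to $\PsAut(X,\De)$.
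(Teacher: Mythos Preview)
Your chamber argument correctly handles the $\Qq$-factorial minimal models that are isomorphic to $X$ in codimension $1$, and this is essentially the first half of the paper's proof: the Shokurov decomposition of $Q$ (Theorem~\ref{thm: Shokurov-Choi}) produces finitely many such models $W^i$, and every small $\Qq$-factorial modification of $X$ is isomorphic to one of them. However, the paper's notion of minimal model is the \emph{weak log canonical model} of \cite{BCHM10} (see Section~\ref{subsec: minimal models}), which requires neither that $Y$ be $\Qq$-factorial nor that $\phi:X\dashrightarrow Y$ be small. For instance, any crepant contraction $W^i\to Z$ over $S$ makes $Z$ a minimal model of $(X/S,\De)$, and for such $Z$ the ``chamber'' $\phi^*\overline{\Amp}(Z/S)$ is lower-dimensional and lies on a wall of the Mori decomposition, so your interior-disjointness and orbit correspondence no longer apply. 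Your remark that ``birational comparisons of minimal models are small'' is true for comparisons between two terminal minimal models, but $\phi:X\dashrightarrow Y$ itself need not be small when $Y$ is merely a weak log canonical model. The paper flags this point explicitly in the introduction.

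The missing ingredient is Theorem~\ref{thm: finite contractions}/Corollary~\ref{cor: finite contractions generic case}: from the same hypothesis on $P_\eta$ one deduces that each $W^i$ admits only finitely many contraction targets over $S$. The paper's proof then concludes as follows. Given any minimal model $Y$ of $(X/S,\De)$, Lemma~\ref{le: lift to iso in codim 1} (applied with $[K_X+\De]=0\in\bMov(X/S)$) produces a $\Qq$-factorial small modification $W$ of $X$ together with a morphism $W\to Y$; by the Shokurov step $W\simeq W^i$ for some $i$; hence $Y$ is one of the finitely many contraction targets of $W^i$. Your proposal recovers the first of these two steps but omits the second, so as written it establishes finiteness only for small $\Qq$-factorial minimal models, not in the generality the theorem claims.
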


In the proof of Theorem \ref{thm: main 2}, we need to treat minimal models that are not birational to $X$ in codimension $1$. To this end, we study the finiteness of contractions from $X$. Surprisingly, we establish that the finiteness of (targets of) the contractions can be derived from the movable cone conjecture. This reinforces the fundamental nature of Conjecture \ref{conj: shokurov polytope}, which encompasses both the finiteness of models and contractions.

\begin{theorem}\label{thm: finite contractions}
Let $f: (X,\De) \to S$ be a klt Calabi-Yau fiber space. Assume that good minimal models exist for effective klt pairs in dimension $\dim(X/S)$. If there exists a rational polyhedral cone $Q \subset \Eff(X/S)$ such that 
\[
\PsAut(X/S, \De) \cdot Q \supset \Mov(X/S),
\] then the set $\{Z \mid X \to Z/S \text{~is a contraction}\}$ is finite.
\end{theorem}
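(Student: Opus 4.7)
My plan is to parametrize targets of contractions by relative ample models of classes in $Q$, using the fact that pseudo\-/automorphisms preserve section rings, and then to deduce finiteness from a Shokurov\-/polytope style argument. Given any contraction $\pi\colon X\to Z/S$, pick a relatively ample $\Qq$-divisor $A_Z$ on $Z/S$ and set $L:=\pi^*A_Z\in\Amp(X/S)\subset\Mov(X/S)$, so that
\[
Z\;\cong\;\mathrm{Proj}_S\Bigl(\bigoplus_{m\geq 0}f_*\Oo_X(mL)\Bigr).
\]
By the hypothesis $\PsAut(X/S,\De)\cdot Q\supset\Mov(X/S)$ I can write $L=\varphi_*L'$ for some $\varphi\in\PsAut(X/S,\De)$ and $L'\in Q$. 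Since $\varphi$ is an isomorphism outside a codimension-two locus and the sheaves $\Oo_X(mL)$ are reflexive, restricting sections to this open locus and pulling back along $\varphi$ yields an isomorphism of graded $\Oo_S$-algebras between the section rings of $L$ and $L'$; hence $Z$ is $S$-isomorphic to the relative Proj determined by $L'$.

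It therefore remains to show that as $L'$ ranges over the rational polyhedral cone $Q$, only finitely many $S$-isomorphism classes of relative Proj can occur. The Calabi-Yau relation $K_X+\De\sim_S 0$ identifies the $L'$-MMP with the $(K_X+\De+tL')$-MMP for small $t>0$; by the assumed existence of good minimal models in dimension $\dim(X/S)$ this MMP terminates on a small $\Qq$-factorial modification of $X$ on which $L'$ becomes semiample, and the associated ample fibration recovers the relative Proj above. A Shokurov-polytope argument, in the same spirit as the one used in the proof of Theorem \ref{thm: main 2}, then cuts $Q$ into finitely many rational polyhedral chambers on each of which this ample model is constant up to $S$-isomorphism. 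Combined with the previous paragraph this yields the required finiteness.

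The main obstacle I anticipate is executing the chamber-decomposition step under the relatively weak assumption of merely good minimal models in dimension $\dim(X/S)$, rather than full BCHM. My approach would be to cover $Q$ by finitely many rational simplices, run MMPs with scaling along rational segments joining their vertices, and invoke the finite-model principle inherent in MMP with scaling. An additional technical wrinkle appears when $L'\in Q$ fails to be big, in which case I would replace the ample model of $L'$ by that of its Iitaka fibration, carried along the relative base $S$.
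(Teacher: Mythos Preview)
Your approach is correct and essentially the same as the paper's: reduce to classes in $Q$ via the fact that pseudo-automorphisms preserve section rings, then apply a Shokurov-polytope decomposition (this is exactly the paper's Theorem~\ref{thm: Shokurov-Choi}) to obtain finitely many small $\Qq$-factorial modifications $X\dashrightarrow Y_i$ and hence finitely many ample models, read off from the finitely many faces of the polyhedral pieces $Q_i^{Y_i}\subset\bAmp(Y_i/S)$. Your anticipated obstacles dissolve once you invoke Theorem~\ref{thm: Shokurov-Choi} directly, and the non-big case needs no separate Iitaka-fibration treatment since semiampleness of $\sigma_{i*}L'$ on $Y_i$ (from Theorem~\ref{thm: HX13}) already delivers the contraction target as $\mathrm{Proj}_S R(Y_i,\sigma_{i*}L')$.
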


\begin{corollary}\label{cor: finite contractions generic case}
Let $f: (X,\De) \to S$ be a terminal Calabi-Yau fiber space. Assume that good minimal models exist for effective klt pairs in dimension $\dim(X/S)$. If there exists a polyhedral cone $P_\eta \subset \Eff(X_\eta)$ such that 
\[
\PsAut(X_\eta)\cdot P_\eta \supset  \Mov(X_\eta),
\] then the set $\{Z \mid X \to Z/S \text{~is a contraction}\}$ is finite.
\end{corollary}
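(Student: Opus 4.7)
The plan is to deduce the corollary by chaining Theorem~\ref{thm: main 1} with Theorem~\ref{thm: finite contractions}: first apply Theorem~\ref{thm: main 1} to promote the generic-fiber hypothesis into a rational polyhedral cone on $X/S$, then apply Theorem~\ref{thm: finite contractions} to that cone to obtain finiteness of contraction targets. Both theorems are already in hand, so the work reduces to verifying that the input of one matches the output of the other.

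First I would check that the hypothesis of the corollary implies that of Theorem~\ref{thm: main 1}. The only apparent discrepancy is between $\PsAut(X_\eta)$ and $\PsAut(X_\eta,\De_\eta)$. In the terminal Calabi-Yau setting any $g\in\PsAut(X_\eta)$ preserves $K_{X_\eta}$, and the relation $K_{X_\eta}+\De_\eta\equiv 0$ then forces $g^*\De_\eta\equiv\De_\eta$, so the images of $\PsAut(X_\eta)$ and $\PsAut(X_\eta,\De_\eta)$ in ${\rm GL}(N^1(X_\eta)_\Rr)$ agree. Hence the same $P_\eta$ satisfies the hypothesis of Theorem~\ref{thm: main 1}, which produces a rational polyhedral cone $Q\subset\Mov(X/S)$ with
\[
\PsAut(X,\De)\cdot (Q\cap N^1(X/S)_\Qq)=\Mov(X/S)_\Qq.
\]

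Next I would feed this $Q$ into Theorem~\ref{thm: finite contractions}: it is a rational polyhedral subcone of $\Eff(X/S)$ whose $\PsAut(X/S,\De)$-orbit covers $\Mov(X/S)$ at the rational level. Since any contraction $X\to Z/S$ is determined by the rational classes pulled back from relatively ample divisors on $Z$, this rational-level coverage is precisely what the finiteness argument of Theorem~\ref{thm: finite contractions} consumes, and the conclusion that $\{Z\mid X\to Z/S\text{ is a contraction}\}$ is finite follows.

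The hard part will be reconciling the $\Qq$-level statement produced by Theorem~\ref{thm: main 1} with the $\Rr$-level statement stipulated in Theorem~\ref{thm: finite contractions}. If the proof of the latter really does require genuine real coverage, then I would enlarge $Q$ to a rational polyhedral $Q'\supset Q$ with $\PsAut(X,\De)\cdot Q'\supset\Mov(X/S)$ over $\Rr$; such an enlargement is available via density of rational classes in $\Mov(X/S)$ together with rational polyhedrality of $Q$, and it does not affect the finiteness conclusion.
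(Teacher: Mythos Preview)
Your overall strategy---apply Theorem~\ref{thm: main 1} to obtain a rational polyhedral $Q\subset\Mov(X/S)$ and then invoke Theorem~\ref{thm: finite contractions}---is exactly the paper's two-line proof.

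Two remarks on the details. First, your resolution of the $\PsAut(X_\eta)$ versus $\PsAut(X_\eta,\De_\eta)$ discrepancy is not valid: from $g_*[\De_\eta]=[\De_\eta]$ you cannot conclude that the \emph{images} of the two groups in ${\rm GL}(N^1(X_\eta))$ coincide; the smaller group could still have a strictly smaller image, since preserving the numerical class of $\De_\eta$ is much weaker than preserving $\Supp\De_\eta$. The paper's own proof glosses over this point entirely, and the hypothesis of the corollary is almost certainly a typo for $\PsAut(X_\eta,\De_\eta)$, matching Theorem~\ref{thm: main 1} and consistent with every application of the corollary in the paper (where $\De=0$).

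Second, your concern about the $\Qq$-level output of Theorem~\ref{thm: main 1} versus the $\Rr$-level hypothesis stated in Theorem~\ref{thm: finite contractions} is well spotted but ultimately harmless: the proof of Theorem~\ref{thm: finite contractions} immediately passes to the $\Qq$-level via Lemma~\ref{le: inclusion implies equal}(1) and thereafter only tests against rational classes $[g^*H]$ with $H$ ample$/S$ Cartier. So the $\Qq$-level conclusion of Theorem~\ref{thm: main 1} feeds directly into that argument, and no enlargement of $Q$ is needed. (Your proposed enlargement ``via density'' would not work as stated anyway: density of rational points does not by itself promote $\Qq$-coverage by an orbit of a polyhedral cone to $\Rr$-coverage.)
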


As previously mentioned, the cone conjecture is essential in the study of the birational geometry of varieties with intermediate Kodaira dimensions $\kappa(X)$. As an illustrative example, we prove that $X$ only admits finitely many minimal models given $\dim X - \kappa(X) \leq 2$. It is noteworthy that such an $X$ may no longer be Calabi-Yau varieties.

\begin{corollary}\label{cor: coKodaira leq 2}
Let $X$ be a normal projective variety with canonical singularities such that $\dim X - \kappa(X) \leq 2$. Then $X$ has finitely many minimal models.
\end{corollary}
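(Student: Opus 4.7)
My plan is to reduce $X$ to a terminal Calabi--Yau fibration of relative dimension at most $2$ and apply Theorem~\ref{thm: main 2}. First I dispose of the trivial cases. If $\kappa(X)=-\infty$, no minimal model of $X$ exists and the assertion is vacuous; if $\kappa(X)=\dim X$, then $X$ is of general type and the finiteness of minimal models is classical. Hence I assume $0\leq\kappa(X)$ with $\dim X-\kappa(X)\in\{1,2\}$. A $\Qq$-factorial terminalization $\tilde X\to X$ is crepant, so finiteness of minimal models of $\tilde X$ implies finiteness for $X$; I may therefore assume that $X$ is $\Qq$-factorial terminal.

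\emph{Realizing a Calabi--Yau fibration.} Let $Y$ be a minimal model of $X$ (if none exists the claim is vacuous). By abundance, which via Kawamata's theorem reduces to abundance on the generic fiber of the Iitaka fibration and hence to classical results for curves and surfaces of Kodaira dimension zero, $K_Y$ is semiample. Let $g\colon Y\to Z$ be the associated Iitaka morphism; then $Z$ is the canonical model of $X$, $\dim Z=\kappa(X)$, and $K_Y\sim_\Qq g^*A$ for some ample $\Qq$-divisor $A$ on $Z$. Thus $(Y,0)\to Z$ is a terminal Calabi--Yau fiber space of relative dimension at most $2$.

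\emph{Matching absolute and relative minimal models.} For any $\Qq$-factorial terminal $Y'$ birational to $Y$, the generic fiber $Y'_\eta$ is birational in codimension one to $Y_\eta$; since both are terminal Calabi--Yau, $K_{Y'_\eta}\sim_\Qq 0$, so $K_{Y'}$ is numerically trivial on the general fiber of $Y'\to Z$. Therefore $K_{Y'}=(g')^*A$ up to $\Qq$-equivalence, and so $K_{Y'}$ is nef if and only if it is nef over $Z$. It follows that the minimal models of $(Y,0)/Z$ coincide with those of $Y$, and hence with those of $X$.

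\emph{Application and main obstacle.} Good minimal models for effective klt pairs in dimension $\leq 2$ are classical. The generic fiber $Y_\eta$ is an elliptic curve or a Calabi--Yau surface (K3, Enriques, abelian, or bielliptic); in each of these cases the classical Morrison--Kawamata cone conjecture (Sterk, Namikawa, Oguiso--Sakai, Kawamata, Totaro) furnishes a rational polyhedral cone $P_\eta\subset \Eff(Y_\eta)$ with $\PsAut(Y_\eta)\cdot P_\eta\supset \Mov(Y_\eta)$. Theorem~\ref{thm: main 2} now yields finiteness of minimal models of $(Y,0)/Z$, and therefore of $X$. The main technical point is the semi-ampleness of $K_Y$ (reduced via Kawamata to the low-dimensional fiber), on which the rest of the argument --- notably the pullback structure $K_Y\sim_\Qq g^*A$ that matches absolute with relative minimal models --- depends.
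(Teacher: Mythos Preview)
Your overall strategy matches the paper's: pass to a terminal Calabi--Yau fibration over the canonical model and invoke the finiteness result for relative dimension $\leq 2$. However, the ``Matching absolute and relative minimal models'' paragraph has a real gap. You assume every minimal model $Y'$ of $X$ is $\Qq$-factorial terminal, but with the paper's definition (weak log canonical model) this is not automatic. More seriously, you speak of ``the generic fiber $Y'_\eta$'' and ``the general fiber of $Y'\to Z$'' before establishing that $Y'\to Z$ is a \emph{morphism}; a priori you only have a rational map $Y'\dashrightarrow Z$. The sentence ``since both are terminal Calabi--Yau, $K_{Y'_\eta}\sim_\Qq 0$'' is circular, and even granting $K_{Y'}$ numerically trivial on a general fiber, the conclusion $K_{Y'}\sim_\Qq (g')^*A$ does not follow without controlling vertical divisors.

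The paper fills this gap by a direct comparison on a common resolution $T\to Y$, $T\to Y'$: writing $h^*K_W=p^*K_Y+E=q^*K_{Y'}+F$ and observing that $E,F$ are both the negative part of the Nakayama--Zariski decomposition of the same divisor, one gets $E=F$ and hence $p^*K_Y=q^*K_{Y'}$. This immediately yields that $K_{Y'}$ is semi-ample and that the induced morphism $Y'\to S$ exists. One then takes a $\Qq$-factorial terminalization $\tilde Y\to Y$ and checks (again via the crepancy $E=F$, now with $\Supp E=\Exc(\tilde p)$ since $\tilde Y$ is terminal) that $\tilde Y\dashrightarrow Y'$ extracts no divisors, so every absolute minimal model $Y'$ is a minimal model of $\tilde Y$ over $S$. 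Finally, for the existence of a good minimal model the precise reference is Lai \cite{Lai11} rather than ``Kawamata's theorem''; and note that the cone conjecture input is needed for $Y_\eta$ over a non-closed field (handled in \cite[Remark~2.2]{Kaw97}), not just for the geometric fiber.
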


Another consequence of Theorem \ref{thm: main 1} is the existence of a weak rational polyhedral fundamental domain under the action of the pseudo-automorphism group.

\begin{theorem}\label{thm: weak fundamental domains}
Let $f: (X,\De) \to S$ be a terminal Calabi-Yau fiber space. Assume that good minimal models exist for effective klt pairs in dimension $\dim(X/S)$. If there exists a polyhedral cone $P_\eta \subset \Eff(X_\eta)$ such that 
\[
\PsAut(X_\eta, \De_\eta)\cdot P_\eta \supset  \Mov(X_\eta),
\] then $\Mov(X/S)_+$ admits a weak rational polyhedral fundamental domain under the action of $\Gamma_B$.
\end{theorem}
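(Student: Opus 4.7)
The plan is to combine Theorem \ref{thm: main 1} with the criterion of Looijenga from \cite{Loo14} for producing weak rational polyhedral fundamental domains, in the same spirit as the passage from Conjecture \ref{conj: shokurov polytope} to the cone conjecture carried out in \cite{LZ22}.

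First, I would apply Theorem \ref{thm: main 1} directly to the given cone $P_\eta$ to obtain a rational polyhedral cone $Q \subset \Mov(X/S)$ satisfying
\[
\PsAut(X,\De) \cdot (Q \cap N^1(X/S)_\Qq) = \Mov(X/S)_\Qq.
\]
Since the $\PsAut(X,\De)$-action on $N^1(X/S)_\Rr$ factors through the natural quotient onto $\Gamma_B$, this may be rewritten as $\Gamma_B \cdot (Q \cap N^1(X/S)_\Qq) = \Mov(X/S)_\Qq$. Thus the $\Gamma_B$-orbit of the rational polyhedral cone $Q$ exhausts all rational classes of $\Mov(X/S)_+$, interpreted as the convex hull of $\overline{\Mov}(X/S) \cap N^1(X/S)_\Qq$.

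I would then invoke Looijenga's theorem (\cite{Loo14}) in the form used in \cite{LZ22}: if a group $\Gamma$ acts linearly on a finite-dimensional rational vector space while preserving an integral lattice and a convex cone $\mathcal{C}$, and if some rational polyhedral subcone $\Pi \subset \mathcal{C}$ has $\Gamma$-orbit exhausting the rational points of $\mathcal{C}$, then $\mathcal{C}_+$ admits a weak rational polyhedral fundamental domain under $\Gamma$. Taking $\Gamma = \Gamma_B$, $\mathcal{C} = \Mov(X/S)$, and $\Pi = Q$, the hypotheses are verified: $\Gamma_B$ is by construction a subgroup of the integral linear group on $N^1(X/S)$, and it preserves $\Mov(X/S)$ because pseudo-automorphisms carry movable divisors to movable divisors. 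The conclusion is exactly the desired statement.

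The main obstacle is not in this deduction but in the preparatory input: once Theorem \ref{thm: main 1} has been secured, the passage from the orbit-covering condition to an actual weak fundamental domain via Looijenga's criterion is essentially formal and parallels the trivial-irregularity case handled in \cite{LZ22}. All substantive difficulty is therefore concentrated in Theorem \ref{thm: main 1}, which the present theorem imports as a black box; the only additional care required is to ensure that the cone $Q$ produced there lies in $\Mov(X/S)$ (rather than only in some enlargement) so that Looijenga's hypotheses are met without modification.
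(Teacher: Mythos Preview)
Your reduction via Theorem \ref{thm: main 1} to a Looijenga-type criterion is exactly the paper's strategy, but you have skipped a genuine issue: Looijenga's criterion (Proposition \ref{prop: prop-def} and Lemma \ref{le: existence of fun domain}) applies only to \emph{non-degenerate} cones, and $\bMov(X/S)$ can contain a non-trivial linear subspace $W$. Your formulation of the criterion simply omits this hypothesis, and so does your verification of the hypotheses. The version ``in the form used in \cite{LZ22}'' that handles degenerate cones still requires $W$ to be defined over $\Qq$; this was left open in \cite[Question 4.5]{LZ22} and is settled here as Proposition \ref{prop: defined over Q}. That proposition is not formal: its proof for the movable cone uses a characterization of $W$ via intersections with curves whose classes cover a divisor on all small $\Qq$-factorial modifications, together with an MMP-with-scaling argument.

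Concretely, the paper proceeds as follows after obtaining $Q$ from Theorem \ref{thm: main 1}: it invokes Proposition \ref{prop: defined over Q} to know that the maximal linear subspace $W\subset\bMov(X/S)$ is defined over $\Qq$, passes to the quotient $N^1(X/S)/W$ where the image $\widetilde{\Mov(X/S)}$ is non-degenerate, applies the rational-points variant of Looijenga's criterion (Proposition \ref{prop: prop-def} (3), which is itself a mild extension proved in this paper) to conclude that $(\widetilde{\Mov(X/S)}_+,\ti\Gamma_B)$ is of polyhedral type, and finally uses Proposition \ref{prop: lift cone} to lift a fundamental domain back to $\Mov(X/S)_+$. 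Your sketch collapses these steps into a single black-box citation; to be complete you must either verify non-degeneracy (which can fail) or supply the rationality of $W$ and the quotient-and-lift argument.
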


Applying the above results to the fibrations with $\dim(X/S) \leq 2$, we obtain the following:

\begin{corollary}[{=Corollary \ref{cor: surface fibration}+Corollary \ref{cor: surface fibration fundamental domain}}]
Let $f: X \to S$ be a terminal Calabi-Yau fiber space. Suppose that $\dim(X/S) \leq 2$, then 
\begin{enumerate}
\item $X/S$ has finitely many minimal models, and 
\item $\Mov(X/S)_+$ admits a weak rational polyhedral fundamental domain under the action of $\Gamma_B$.
\end{enumerate}
\end{corollary}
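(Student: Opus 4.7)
The plan is to deduce both parts from Theorem \ref{thm: main 2} and Theorem \ref{thm: weak fundamental domains}, applied to $(X,0) \to S$. Each of these theorems has three hypotheses: that the input is a terminal Calabi-Yau fiber space (given), that good minimal models exist for effective klt pairs in dimension $\dim(X/S)$ (classical in dimension $\le 2$ via the surface minimal model program and Kodaira's abundance theorem), and the Shokurov-polytope hypothesis on the generic fiber $X_\eta$---rational polyhedral for the finiteness statement and merely polyhedral for the fundamental-domain statement. Thus the only step to carry out is the verification of the hypothesis of Conjecture \ref{conj: shokurov polytope}(1) on $X_\eta$.

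In relative dimension one, $X_\eta$ is a smooth genus-one curve over $K(S)$ and the movable, nef, and ample cones all coincide with a single ray, so the condition is trivial with $P_\eta = \Mov(X_\eta)$. In relative dimension two, terminality in dimension $2$ forces $X_\eta$ to be smooth, and $K_{X_\eta} \equiv 0$ makes it a minimal surface of Kodaira dimension $0$. After base change to $\overline{K(S)}$, the Enriques--Kodaira classification leaves exactly four possibilities (abelian, K3, Enriques, bielliptic), and for each the classical Morrison--Kawamata cone conjecture---and in particular Conjecture \ref{conj: shokurov polytope}(1) with $P$ rational polyhedral---is a theorem due to the work of Sterk, Kawamata, Totaro, and others. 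Once the polytope hypothesis is in hand for $X_\eta$, Theorem \ref{thm: main 2} produces the finiteness of minimal models and Theorem \ref{thm: weak fundamental domains} produces the weak rational polyhedral fundamental domain.

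The one non-formal step, and thus the main obstacle, is Galois descent from $\overline{K(S)}$ down to $K(S)$: the cited theorems supply a rational polyhedral cone $P$ inside $\Eff(X_{\overline{\eta}})$, whereas the hypotheses of Theorems \ref{thm: main 2} and \ref{thm: weak fundamental domains} demand one inside $\Eff(X_\eta)$. I would handle this by spreading $P$ out to a finite Galois extension $L/K(S)$ over which the relevant pseudo-automorphisms and classes are defined, forming the $\mathrm{Gal}(L/K(S))$-invariant convex hull of $P_L$ inside $N^1(X_L)_\Rr$, and identifying the result with a rational polyhedral cone $P_\eta$ in $N^1(X_\eta)_\Rr$ via the finite-index inclusion $N^1(X_\eta)_\Qq \hookrightarrow N^1(X_L)_\Qq^{\mathrm{Gal}(L/K(S))}$. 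Compatibility of the $\PsAut$-action with the Galois action, together with $\Mov(X_\eta) = \Mov(X_L) \cap N^1(X_\eta)_\Rr$, then yields the required inclusion $\PsAut(X_\eta) \cdot P_\eta \supset \Mov(X_\eta)$, completing the verification and hence the corollary.
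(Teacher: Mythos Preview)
Your overall strategy---verify the polytope hypothesis on the generic fiber and then invoke Theorem~\ref{thm: main 2} and Theorem~\ref{thm: weak fundamental domains}---is exactly what the paper does. The difference lies in how that hypothesis on $X_\eta$ is checked. The paper simply cites \cite[Remark~2.2]{Kaw97}, which records the cone conjecture for minimal surfaces with $K\equiv 0$ over a (possibly non-closed) field of characteristic~$0$; you instead propose to descend it from $X_{\bar\eta}$ by a Galois-averaging argument.

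That descent step has a genuine gap. Knowing $\PsAut(X_L)\cdot P_L\supset\Mov(X_L)$ gives, for a class $[D]\in\Mov(X_\eta)$, some $g\in\PsAut(X_L)$ with $g\cdot[D_L]\in P_L$. Compatibility with the Galois action tells you only that $\sigma(g)\cdot[D_L]\in\sigma(P_L)$ for each $\sigma$; it does not manufacture a single $\Gal(L/K(S))$-invariant pseudo-automorphism sending $[D_L]$ into your Galois-symmetrized cone $\widetilde P_L$. Pseudo-automorphism groups are not abelian, so there is no ``sum over the Galois orbit'' trick available for $g$ itself, and taking the convex hull of the $\sigma(P_L)$ does not help: $[D_L]$ may fail to land in $\widetilde P_L$ under any $K(S)$-rational automorphism. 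The paper in fact warns in the introduction that the cone conjecture is sensitive to finite covers, and after Corollary~\ref{cor: surface fibration} explicitly notes that the log version over non-closed fields is deferred to future work---precisely because this kind of descent is not automatic.

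So your reduction is correct, but the last paragraph would need to be replaced either by a direct citation (as the paper does) or by a substantive argument specific to each surface type that produces enough automorphisms already defined over $K(S)$.
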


This result generalizes both \cite{Kaw97} and \cite{FHS21} where the finiteness of minimal models are established for threefolds with $\dim(X/S) \leq 2$ and for elliptic fibrations, respectively. 

To elucidate the main idea behind the proof of Theorem \ref{thm: main 1}, we will provide a simplified overview. Let us assume that $X \to S$ is a Calabi-Yau fibration, and denote its generic fiber as $X_\eta$. Suppose that $X_\eta$ satisfies the cone conjecture. In particular, there is a rational polyhedral cone $P_\eta \subset \Eff(X_\eta)$ such that $\PsAut(X_\eta)\cdot P_\eta \supset \Mov(X_\eta)$. Our goal is to lift $P_\eta$ to a rational polyhedral cone $P \subset \Eff(X/S)$ such that $\PsAut(X/S) \cdot P \supset \Mov(X/S)$. This is essential to meet the requirements of Conjecture \ref{conj: shokurov polytope}. If $R^1f_*\Oo_X=0$, then for Cartier divisors $D, B$ on $X$ such that $D_\eta \equiv B_\eta$, we have $D_\eta \sim_\Qq B_\eta$ and thus $D \equiv B/S$ modulo a vertical divisor. However, without $R^1f_*\Oo_X=0$, it may happen that $D \not\equiv B/S$ even after modulo vertical divisors. In other words, there may exist two numerically different divisors (even modulo vertical divisors) on $X$ that look the same in $N^1(X_\eta)$. The idea to overcome this difficulty, as had already appeared in \cite{Kaw97} in special cases, is to use the natural automorphism group $\Aut^0(X_\eta)$ to ensure that, as long as $D_\eta \equiv B_\eta$, 
\[
D \in \Aut^0(X_\eta) \cdot P_B \mod {(\text{vertical divisors})}
\] where $P_B \subset N^1(X/S)$ is a rational polyhedral cone associated with $B$. This process needs to be ``globalized" across all divisors on $P_\eta$ uniformly, resulting in an enlarged rational polyhedral cone that fulfills our objective. 

Nonetheless, overcoming these challenges is not straightforward. 

(1) We are unable to show $\PsAut(X/S) \cdot P \supset \Mov(X/S)$; instead, a weaker statement is sufficient for our purposes, namely that $\PsAut(X/S) \cdot P$ contains the rational elements of $\Mov(X/S)$. This adaptation requires modifications to results concerning the geometry of convex cones, as found in \cite{Loo14} and \cite{LZ22}. 

(2) We need to vastly generalize the aforementioned construction that leverages the numerical equivalence to linear equivalence through the action of $\Aut^0(X_\eta)$. This crucial step draws upon profound results on algebraic groups associated with Calabi-Yau fiber spaces.

We discuss the contents of this paper. Section \ref{sec: preliminaries} furnishes essential background materials and introduces the notation employed throughout the paper. Section \ref{sec: Geometry of convex cones} develops the convex geometry needed in this paper. The main result is Proposition \ref{prop: defined over Q} which is of independent interest. Section \ref{sec: homogeneous fibrations} studies the group schemes associated with Calabi-Yau fibrations. The main result is Theorem \ref{thm: num to linear}. The entirety of Section \ref{sec: fibration vs generic fiber} is devoted to the proof of Theorem \ref{thm: main 1}, the core of this paper. Section \ref{sec: applications} employs the established theorems to prove the remaining results, including those on the finiteness of contractions and minimal models, and the existence of weak fundamental domains.

\subsection*{Acknowledgments} The author is grateful for enlightening discussions with Yong Hu, Jinsong Xu, and Yifei Zhu. We also recognize the substantial influence of \cite{Kaw97} on our work, as it served as our starting point and source of inspiration. It is worth noting that while Shokurov’s polytope technique is not explicitly stated in \cite{Kaw97}, it seems that certain aspects of the proof in \cite{Kaw97} bear a close resemblance to the proof of Shokurov’s polytope.

The author acknowledges partial support from grants provided by the Shenzhen municipality and the Southern University of Science and Technology.

\section{Preliminaries}\label{sec: preliminaries}

Let $f: X \to S$ be a projective morphism between normal quasi-projective varieties over an algebraically closed field of characteristic $0$. Then $f$ is called a fibration if $f$ has connected fibers. We write $X/S$ to mean that $X$ is over $S$. 

For $\mathbb K=\Zz, \Qq, \Rr$ and two $\mathbb K$-Weil divisors $A, B$ on $X$, $A \sim_{\mathbb K} B/S$ means that $A$ and $B$ are $\mathbb K$-linearly equivalent over $S$. We also refer to a $\mathbb K$-divisor as simply a divisor. If $A, B$ are $\Rr$-Cartier divisors, then $A \equiv B/S$ means that $A$ and $B$ are numerically equivalent over $S$. 

We use $\Supp E$ to denote the support of the divisor $E$. A divisor $E$ on $X$ is called a vertical divisor (over $S$) if $f(\Supp E) \neq S$. A vertical divisor $E$ is called a very exceptional divisor if for any prime divisor $P$ on $S$, over the generic point of $P$, we have $\Supp f^*P \not\subset \Supp E$ (see \cite[Definition 3.1]{Bir12b}). If $f$ is a birational morphism, then the notion of a very exceptional divisor coincides with that of an exceptional divisor. 

Let $X$ be a normal complex variety and $\De$ be an $\Rr$-divisor on $X$. Then $(X, \De)$ is called a log pair. We assume that $K_X+\De$ is $\Rr$-Cartier for a log pair $(X, \De)$. 

We call $f: (X, \De) \to S$ a Calabi-Yau fibration or a Calabi-Yau fiber space if $X$ is $\Qq$-factorial, $X \to S$ is a fibration, and $K_X+\De \sim_\Rr 0/S$. When $(X, \De)$ has lc singularities (see Section \ref{subsec: minimal models}),  then $K_X+\De \sim_\Rr 0/S$ is equivalent to the weaker condition  $K_X+\De \equiv 0/S$ by \cite[Corollary 1.4]{HX16}. Note that for an effective $\Rr$-divisor $\De$ such that $(X/S,\De)$ is klt and satisfies $K_X+\De \sim_\Rr 0/S$, then there exists a $\Qq$-divisor $\De'$ such that $(X/S,\De')$ is klt with $K_X+\De'\sim_\Qq 0/S$ and $\Supp \De= \Supp \De'$. Therefore, in terms of the cone conjecture, there is no difference to state it for $\Qq$-linear equivalent or $\Rr$-linear equivalent to $0$.

\subsection{Cones}\label{subsec: cones}

Let $V$ be a finite-dimensional real vector space. A set $C\subset V$ is called a cone if for any $x\in C$ and $\lambda\in \Rr_{>0}$, we have $\lambda \cdot x \in C$. We use $\Int(C)$ to denote the relative interior of $C$ and call  $\Int(C)$ the open cone. By convention, the origin is an open cone.  A cone is called a polyhedral cone (resp. rational polyhedral cone) if it is a closed convex cone generated by finite vectors (resp. rational vectors). If $S \subset V$ is a subset, then $\Conv(S)$ denotes the convex hull of $S$, and $\Cone(S)$ denotes the closed convex cone generated by $S$. As we are only concerned about convex cones in this paper, we also call them cones.

Let $\Pic(X/S)$ be the relative Picard group. Let $$N^1(X/S)_\Zz \coloneqq \Pic(X/S)/{\equiv}$$ be the lattice. Set $\Pic(X/S)_{\mathbb K} \coloneqq \Pic(X/S) \otimes_\Zz \mathbb K$ and $N^1(X/S)_{\mathbb K} \coloneqq N^1(X/S)_\Zz \otimes_\Zz \mathbb K$ for $\mathbb K = \Qq$ or $\Rr$. If $D$ is an $\Rr$-Cartier divisor, then $[D] \in N^1(X/S)_\Rr$ denotes the corresponding divisor class. To abuse the terminology, we also call $[D]$ an $\Rr$-Cartier divisor.

A $\Qq$-Cartier divisor $D$ on $X/S$ is called movable$/S$ if there exists $m\in \Zz_{>0}$ such that the base locus of the linear system $|mD/S|$ has codimension $>1$. An $\Rr$-Cartier divisor is called movable if it is a positive $\Rr$-linear combination of movable Cartier divisors.

We list relevant cones inside $N^1(X/S)_\Rr$ which appear in the paper (the notation is slightly different from \cite{LZ22}):
\begin{enumerate}
\item $\Eff(X/S)$: the cone generated by effective Cartier divisors;
\item $\bEff(X/S)$: the closure of $\Eff(X/S)$;
\item $\Mov(X/S)$: the cone generated by movable divisors;
\item $\bMov(X/S)$: the closure of $\Mov(X/S)$;
\item ${\Mov(X/S)_+}\coloneqq \Conv(\bMov(X/S) \cap N^1(X/S)_\Qq)$ (see Definition \ref{def: polyhedral type});
\item $\Amp(X/S)$: the cone generated by ample divisors;
\item $\bAmp(X/S)$: the closure of $\Amp(X/S)$ (i.e., the nef cone);
\end{enumerate}
To denote the rational elements in the corresponding cones, we set
\begin{enumerate}
\setcounter{enumi}{7}
\item $\Eff(X/S)_\Qq \coloneqq \Eff(X/S)\cap N^1(X/S)_\Qq$;
\item $\Mov(X/S)_\Qq \coloneqq \Mov(X/S)\cap N^1(X/S)_\Qq$;
\item $\Amp(X/S)_\Qq \coloneqq \Amp(X/S)\cap N^1(X/S)_\Qq$;
\end{enumerate}
Moreover, for simplicity, we set
\begin{enumerate}
\setcounter{enumi}{10}
\item $N^1(X/S) \coloneqq N^1(X/S)_\Rr$.
\end{enumerate}

Let $K(S)$ be the field of rational functions of $S$, and $\overline{K(S)}$ be the algebraic closure of $K(S)$. Set $\eta\coloneqq\spec K(S)$ and $\bar\eta \coloneqq \spec \overline{K(S)}$. Then $X_\eta$ and $X_{\bar\eta}$ denote the generic fiber and the geometric fiber of $X \to S$, respectively. The above cones still make sense for $X_\eta/\eta$ and $X_{\bar\eta}/\bar\eta$.

Recall that for a birational map $g: X \dto Y/S$, if $D$ is an $\Rr$-Cartier divisor on $X$, then the pushforward of $D$, denoted as $g_*D$, is defined as follows.  Let $p: W \to X, q: W \to X$ be birational morphisms such that $g \circ p=q$, then $g_*D \coloneqq q_*(p^*D)$. This is independent of the choice of $p$ and $q$. 

 Let $\De$ be a divisor on a $\Qq$-factorial variety $X$. We use $\Bir(X/S, \De)$ to denote the birational automorphism group of $(X, \De)$ over $S$. Specifically, $\Bir(X/S, \De)$ consists of birational maps $ g: X \dto X/S$ such that $g_* \Supp \De= \Supp \De$. A birational map is called a pseudo-automorphism if it is isomorphic in codimension $1$. Let $\PsAut(X/S, \De)$ be the subgroup of  $\Bir(X/S, \De)$ consisting of pseudo-automorphisms. Let $\Aut(X/S, \De)$ be the subgroup of $\Bir(X/S, \De)$ consisting of automorphisms of $X/S$. For a field $K$, if $X$ is a variety over $K$ and $\De$ is a divisor on $X$, then we will omit $\spec K$, and use $\Bir(X, \De), \PsAut(X, \De)$ and $\Aut(X, \De)$ to denote the birational automorphism group, the pseudo-automorphism group and the automorphism group of $X/K$, respectively. See \cite[Example 2.1]{LZ22} for an example of a birational map which is not a pseudo-automorphism.  On the other hand, it is well-known that if $(X/S, \De)$ has terminal singularities and $K_X+\De$ is nef$/S$, then any birational map is a pseudo-automorphism (see Lemma \ref{le: bir=pseudoauto}).

Let $g\in \Bir(X/S, \De)$ and $D$ be an $\Rr$-Cartier divisor on a $\Qq$-factorial variety $X$. Because the pushforward map $g_*$ preserves numerical equivalence classes, there is a linear map (recall that $N^1(X/S)$ denotes $N^1(X/S)_\Rr$ under our convention)
\[
g_*: N^1(X/S) \to N^1(X/S), \quad [D] \mapsto [g_*D].
\] However, if $g\in \Bir(X/S)$ is not isomorphic in codimension $1$, then for a $[D] \in \Mov(X/S)$, $[g_*D]$ may not be in $\Mov(X/S)$. Moreover, $(g , [D]) \mapsto [g_*D]$ is not a group action of $\Bir(X/S, \De)$ on $N^1(X/S)$. For instance, if $D$ is a divisor contracted by $g$, then $g^{-1}_*(g_*[D])= 0 \neq (g^{-1}\circ g)_*[D]$.

On the other hand, it can be verified that
\[
\begin{split}
\PsAut(X/S, \De) \times N^1(X/S) &\to N^1(X/S)\\
(g, [D])& \mapsto [g_*D],
\end{split}
\] is a group action. Note that if $g\in \Aut(X/S,\De)$, then $g_*D = (g^{-1})^*D$. Although for any $g\in \PsAut(X/S)$, we can still define the pullback map on an $\Rr$-Cartier divisor, in order to make $\PsAut(X/S,\De)$ acting on $N^1(X/S)_\Rr$ from the left, we use the pushforward map.

We use $g \cdot D$ and $g \cdot [D]$ to denote $g_*D$ and $[g_*D]$, respectively. Let $\Gamma_B$ and $\Gamma_A$ be the images of $\PsAut(X/S, \De)$ and $\Aut(X/S, \De)$ under the natural group homomorphism $$\iota: \PsAut(X/S, \De) \to {\rm GL}(N^1(X/S)).$$ Because $\Gamma_B, \Gamma_A \subset {\rm GL}(N^1(X/S)_\Zz)$, $\Gamma_B$ and $\Gamma_A$ are discrete subgroups. By abusing the notation, we also write $g$ for $\iota(g) \in \Gamma_B$, and denote $\iota(g)([D])$ by $g\cdot [D]$. Then the cones $\Mov(X/S)_\Qq, \bMov(X/S)$ and $\Mov(X/S)_+$ are all invariant under the action of $\PsAut(X/S,\De)$. Similarly, $\Amp(X/S)_\Qq, \Amp(X/S)$ and $\Amp(X/S)_+$ are all invariant under the action of $\Aut(X/S,\De)$.

We use $\bAut(X/S,\De)$ and $\bPic(X/S)$ to denote the group schemes that represent the automorphism functor and the Picard functor, respectively. See Section \ref{sec: fibration vs generic fiber} for details.

\subsection{Minimal models of varieties}\label{subsec: minimal models}

Let $(X,\De)$ be a log pair. For a divisor $D$ over $X$, if $f: Y \to X$ is a birational morphism from a smooth variety $Y$ such that $D$ is a divisor on $Y$, then the log discrepancy of $D$ with respect to $(X, \De)$ is defined to be $$a(D; X, \De) \coloneqq\mult_{D}(K_Y-f^*(K_X+\De))+1.$$ This definition is independent of the choice of $Y$. A log pair $(X, \De)$ (or its singularity) is called sub-klt (resp. sub-lc) if the log discrepancy of any divisor over $X$ is $>0$ (resp. $\geq 0$). If $\De \geq 0$, then a sub-klt (resp. sub-lc) pair $(X, \De)$ is called klt (resp. lc). If $\De \geq 0$ and the log discrepancy of any exceptional divisor over $X$ is $>1$, then $(X,\De)$ is said to have terminal singularities. A fibration/fiber space $(X, \De) \to S$ is called a klt (resp. terminal) fibration/fiber space if $(X, \De)$ is klt (resp. terminal).

Let $X \to S$ be a projective morphism of normal quasi-projective varieties. Suppose that $(X, \De)$ is klt. Let $\phi: X \dto Y/S$ be a birational contraction (i.e., $\phi$ does not extract divisors) of normal quasi-projective varieties over $S$, where $Y$ is projective over $S$. We write $\De_Y \coloneqq \phi_*\De$ for the strict transform of $\De$. Then $(Y/S, \De_Y)$ is a minimal model of $(X/S, \De)$ if $K_Y+\De_Y$ is nef$/S$ and $a(D; X, \De) \geq a(D;Y, \De_Y)$ for any divisor $D$ over $X$. Note that a minimal model here is called a weak log canonical model in \cite{BCHM10}.

A minimal model $(Y/S, \De_Y)$ of $(X/S, \De)$ is called a good minimal model of $(X/S, \De)$ if $K_Y+\De_Y$ is semi-ample$/S$. It is well-known that the existence of a good minimal model of $(X/S, \De)$ implies that any minimal model of $(X/S, \De)$ is a good minimal model (for example, see \cite[Remark 2.7]{Bir12b}).

By saying that ``good minimal models of effective klt pairs exist in dimension $n$", we mean that for any projective variety $X$ of dimension $n$ over an algebraically closed field of characteristic $0$, if $(X, \De)$ is klt and the Kodaira dimension $\ka(K_X+\De) \geq 0$, then $(X, \De)$ has a good minimal model. 

The following lemma allows to lift a minimal model so that it is isomorphic to the original variety in codimension $1$.

\begin{lemma}[{\cite[Lemma 2.2]{LZ22}}]\label{le: lift to iso in codim 1}
Let $(X/S, \De)$ be a klt pair with $[K_X+\De] \in \bMov(X/S)$. Suppose that $g: (X/S, \De) \dto (Y/S, \De_Y)$ is a minimal model of $(X/S, \De)$. Then $(X/S, \De)$ admits a minimal model $(Y'/S, \De_{Y'})$ such that 
\begin{enumerate}
\item $Y'$ is $\Qq$-factorial,
\item $X, Y'$ are isomorphic in codimension $1$, and
\item there exists a morphism $\nu: Y' \to Y/S$ such that $K_{Y'}+\De_{Y'} = \nu^*(K_Y+\De_Y)$.
\end{enumerate}
\end{lemma}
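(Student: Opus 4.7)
The plan is to build $Y'$ by running a perturbed MMP on $(X,\De)$ that is forced to stay in the interior of the movable cone, and then to promote the resulting birational map to $Y$ into a morphism by comparing the two minimal models on a common resolution.

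First I would reduce to the case that $Y$ is $\Qq$-factorial. Applying BCHM to the klt pair $(Y,\De_Y)$ yields a small $\Qq$-factorialization $\mu\colon \tilde Y\to Y$; since $\mu$ is small, $K_{\tilde Y}+\De_{\tilde Y}=\mu^*(K_Y+\De_Y)$ remains nef$/S$ and $(\tilde Y,\De_{\tilde Y})$ is again a minimal model of $(X,\De)$. Replacing $Y$ by $\tilde Y$, I may assume $Y$ is $\Qq$-factorial.

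Next I would pick an ample$/S$ divisor $A$ on $X$ and a small rational $\epsilon>0$ so that $(X,\De+\epsilon A)$ is klt. Since $[K_X+\De]\in\bMov(X/S)$ and $[\epsilon A]\in\Int(\Amp(X/S))\subset\Int(\Mov(X/S))$, convexity places $[K_X+\De+\epsilon A]$ in $\Int(\Mov(X/S))$. Running a $(K_X+\De+\epsilon A)$-MMP$/S$ with scaling by a general ample, termination is driven by the existence of the fixed minimal model $Y$ of $(X,\De)$ via standard MMP-with-scaling arguments. At each step the corresponding class stays in the interior of the movable cone of the current model (using the identification of $N^1$ across flips), and a divisorial contraction is impossible: pulling back across such a contraction would introduce a positive multiple of the exceptional divisor as a fixed part, contradicting movability. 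Hence every step is a flip, the output $Y'$ is $\Qq$-factorial, and $X$ and $Y'$ are isomorphic in codimension one. Shrinking $\epsilon$ while keeping the output fixed (possible because the set of intermediate MMP outputs is discrete) forces $K_{Y'}+\De_{Y'}$ itself to be nef$/S$, so $(Y',\De_{Y'})$ is a minimal model of $(X,\De)$.

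For the morphism $\nu$: the induced birational map $Y'\dto Y$ is a birational contraction (the map $Y'\dto X$ is an isomorphism in codimension one, and $g\colon X\dto Y$ is a birational contraction by hypothesis). Both $(Y',\De_{Y'})$ and $(Y,\De_Y)$ are minimal models of $(X,\De)$, so by standard comparison (two applications of the negativity lemma on a common resolution) the pullbacks of $K_{Y'}+\De_{Y'}$ and of $K_Y+\De_Y$ coincide; consequently $K_{Y'}+\De_{Y'}$ is numerically $\nu$-trivial and nef$/S$, and the base-point-free theorem relative to $Y$ promotes $\nu$ to a genuine morphism with $K_{Y'}+\De_{Y'}=\nu^*(K_Y+\De_Y)$. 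The hardest part of the argument will be the termination of the perturbed MMP together with the descent from the nefness of $K_{Y'}+\De_{Y'}+\epsilon A_{Y'}$ to that of $K_{Y'}+\De_{Y'}$; both rely crucially on the given minimal model $Y$ as a termination anchor and as the target for the small-$\epsilon$ identification.
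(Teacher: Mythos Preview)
The paper does not reproduce the proof (it is cited from \cite{LZ22}), but the standard argument there works from $Y$, not from $X$. Your steps toward (1) and (2) are essentially fine: after $\Qq$-factorializing $Y$, a $(K_X+\De)$-MMP with scaling of an ample divisor terminates (because a minimal model exists) and consists only of flips, since a divisorial step would force $\sigma_E(K_{X_i}+\De_{X_i})>0$ for the contracted divisor, contradicting $[K_{X_i}+\De_{X_i}]\in\bMov(X_i/S)$. This yields a $\Qq$-factorial $Y'$ isomorphic to $X$ in codimension~$1$ with $K_{Y'}+\De_{Y'}$ nef$/S$. Your ``shrinking $\epsilon$'' device can be made rigorous via finiteness of models, but is more cleanly replaced by this direct MMP.

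The genuine gap is in your last paragraph. Asserting that $K_{Y'}+\De_{Y'}$ is ``numerically $\nu$-trivial'' and then invoking the base-point-free theorem ``relative to $Y$'' is circular: both presuppose a morphism to $Y$, which is precisely what you are trying to produce. What the negativity lemma actually gives is $p^*(K_{Y'}+\De_{Y'})=q^*(K_Y+\De_Y)$ on a common resolution $W$; hence any $p$-contracted curve $C$ satisfies $(K_Y+\De_Y)\cdot q_*C=0$. If $K_Y+\De_Y$ were ample this would force $q_*C=0$ and give the morphism, but $K_Y+\De_Y$ is only nef, so $q(C)$ may be a $(K_Y+\De_Y)$-trivial curve on $Y$, and then $Y'\dashrightarrow Y$ is undefined at $p(C)$. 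Equivalently: among the $\Qq$-factorial minimal models of $(X,\De)$ isomorphic to $X$ in codimension~$1$ (which are mutually connected by flops), only some admit a morphism to the given $Y$, and nothing in your MMP selects one of those.

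The argument in \cite{LZ22} avoids this by building $Y'$ over $Y$ from the start. One takes a small $\Qq$-factorialization $\tilde Y\to Y$ and then uses BCHM to extract from $\tilde Y$ exactly the divisors on $X$ contracted by $g$. The hypothesis $[K_X+\De]\in\bMov(X/S)$ forces $F\coloneqq p^*(K_X+\De)-q^*(K_Y+\De_Y)$ to coincide with $N_\sigma(p^*(K_X+\De)/S)$, hence $\mult_E F=\sigma_E(K_X+\De/S)=0$ for every such $E$; thus $a(E;Y,\De_Y)=a(E;X,\De)\le 1$ and the extraction is crepant, producing $\nu\colon Y'\to\tilde Y\to Y$ with $K_{Y'}+\De_{Y'}=\nu^*(K_Y+\De_Y)$ and $X\dashrightarrow Y'$ small by construction.
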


\begin{theorem}[{\cite[Theorem 2.12]{HX13}}]\label{thm: HX13}
Let $f: X \to S$ be a surjective projective morphism and $(X, \De)$ a klt pair such that for a very general closed point $s\in S$, the fiber $(X_s, \De_s=\De|_{X_s})$ has a good minimal model. Then $(X, \De)$ has a good minimal model over $S$.
\end{theorem}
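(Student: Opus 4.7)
The plan is to run an appropriate minimal model program over $S$ whose termination is controlled by the existence of a good minimal model on the generic fiber. After replacing $(X,\De)$ by a log resolution, I may assume the pair is log smooth and $X$ is $\Qq$-factorial; this preserves both the hypothesis on very general fibers and the desired conclusion, since a good minimal model of the resolution descends to one of $(X,\De)$.

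First, I would upgrade the hypothesis on very general fibers to the generic fiber $(X_\eta,\De_\eta)$ over $K(S)$. Fix a very general $s\in S$ admitting a good minimal model $X_s\dto Y_s$ with semi-ample contraction $Y_s\to Z_s$. Spreading out the finite sequence of MMP steps, the variety $Y_s$, and the induced contraction to an open neighbourhood of $s$ and restricting to the generic point, one obtains a good minimal model $(X_\eta,\De_\eta)\dto(Y_\eta,\De_{Y_\eta})$ over $K(S)$ together with an Iitaka-type contraction $\pi_\eta\colon Y_\eta\to Z_\eta$ such that $K_{Y_\eta}+\De_{Y_\eta}\sim_\Qq \pi_\eta^* H_\eta$ for some ample $H_\eta$ on $Z_\eta$.

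Second, taking projective closures over $S$ produces rational maps $X\dto Y\dto Z$ extending the generic-fiber picture. I would then run a $(K_X+\De)$-MMP over $Z$ with scaling of a sufficiently general ample divisor. On the generic fiber this is an MMP on $X_\eta/Z_\eta$ along which $K_X+\De$ is relatively numerically trivial, so the generic fiber stabilizes after finitely many steps at the already-constructed good minimal model.

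The main obstacle is termination over $S$ once the generic fiber has stabilized. Beyond that point, every further flip or divisorial contraction is supported in the locus vertical over $Z$. To handle this I would invoke special termination for log smooth klt pairs, combined with the fact that over $Z$ the log canonical divisor has Iitaka dimension zero on each geometric fiber, so that the MMP with scaling reduces to finitely many strictly vertical steps. The resulting limit $(Y^\star/S,\De_{Y^\star})$ is then a minimal model of $(X/S,\De)$, and its semi-ampleness over $S$ follows by pulling back the relative semi-ample linear system from $Z_\eta$ and extending it via the relative basepoint-free theorem, producing the desired good minimal model over $S$.
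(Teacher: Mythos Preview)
The paper does not prove this statement: it is quoted from \cite[Theorem 2.12]{HX13}, and the only content the paper adds is the remark immediately following it, explaining how to pass from $\Qq$-divisors to $\Rr$-divisors by replacing $\proj_S \bigoplus_{m} R^0f_*\Oo_X(m(K_X+\De))$ with the canonical model of $(X/S,\De)$ supplied by \cite{Li22}. So there is no in-paper proof to compare your proposal against beyond that one sentence.

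That said, your sketch has a genuine gap in the termination step. Invoking ``special termination for log smooth klt pairs'' does not do anything here: special termination is a statement about flipping loci meeting lc centers, and for klt pairs it is vacuous. The further claim that once the generic fiber stabilizes ``the MMP with scaling reduces to finitely many strictly vertical steps'' is exactly what needs to be proved and is not justified by anything you wrote. In the actual Hacon--Xu argument this is handled by a different mechanism: one first establishes the existence of the relative canonical model $Z=\proj_S \bigoplus_m f_*\Oo_X(m(K_X+\De))$ (this is where the hypothesis on very general fibers enters, via invariance of plurigenera and canonical-bundle-formula type reductions), and then runs the MMP with scaling over $Z$. Over $Z$ one can control the negative part of the Nakayama--Zariski decomposition of $K_X+\De$, and after finitely many steps it is contracted; nefness and then semi-ampleness follow. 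Your outline never produces or uses this canonical model, which is precisely the object the paper's remark singles out as the key input.

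A secondary issue: spreading out a sequence of MMP steps from one very general closed point $s$ to the generic point $\eta$ is delicate, because $K(S)$ is not algebraically closed and a birational modification performed over the residue field at $s$ need not descend to $K(S)$. The Hacon--Xu approach avoids this by working with relative invariants (plurigenera, the canonical ring) rather than transporting explicit birational maps from a closed fiber.
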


\cite[Theorem 2.12]{HX13} states for a $\Qq$-divisor $\De$. However, it still holds for an $\Rr$-divisor $\De$: in the proof of \cite[Theorem 2.12]{HX13}, one only needs to replace $\proj_S \oplus_{m\in\Zz_{>0}}R^0f_*\Oo_X(m(K_X+\De))$ by the canonical model of $(X/S, \De)$ whose existence is known for effective klt pairs by \cite{Li22}. Indeed, because $\kappa(K_{X_s}+\De_s) \geq 0$ for a very general $s\in S$ by assumption, $K_X+\De \sim_\Rr E/S$ with $E \geq 0$ by \cite[Theorem 3.15]{Li22}.

Let $V$ be a finite-dimensional $\Rr$-vector space. A polytope (resp. rational polytope) $P\subset V$ is the convex hull of finite points (resp. rational points) in $V$. In particular, a polytope is always closed and bounded. We use $\Int(P)$ to denote the relative interior of $P$ and call $\Int(P)$ the open polytope. By convention, a single point is an open polytope.  Therefore, $\Rr_{>0}\cdot P$ is an open polyhedral cone iff $P$ is an open polytope. See \cite{LZ22} for the proof of the following results:

\begin{theorem}[{\cite[Theorem 3.4]{SC11}}]\label{thm: Shokurov}
Let $X$ be a $\Qq$-factorial variety and $f: X \to S$ be a  fibration. Assume that good minimal models exist for effective klt pairs in dimension $\dim(X/S)$. Let $D_i, i=1,\ldots, k$ be effective $\Qq$-divisors on $X$. Suppose that $P \subset \oplus_{i=1}^k [0,1) D_i$ is a rational polytope such that for any $\De \in P$, $(X, \De)$ is klt and $\ka(K_F+ \De|_F)\geq 0$, where $F$ is a general fiber of $f$. 

Then $P$ can be decomposed into a disjoint union of finitely many open rational polytopes $P = \sqcup_{i=1}^m Q^\circ_i$ such that for any $B, D \in Q^\circ_i$, if $(Y/S, B_Y)$ is a minimal model of $(X/S, B)$, then $(Y/S, D_Y)$ is also a minimal model of $(X/S, D)$.
\end{theorem}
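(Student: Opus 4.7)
The plan is to combine Theorem \ref{thm: HX13} with a local-constancy analysis. First I assert the finiteness, as $B$ ranges over $P$, of the birational contractions $\phi : X \dto Y/S$ that arise as minimal models of $(X/S,B)$. Then I show that the locus $P_\phi \subset P$ on which a given such contraction continues to give a minimal model is a closed rational polytope. Finally, I take the polyhedral arrangement cut out by these finitely many polytopes and declare the open cells to be the $Q_i^\circ$.

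For each $\De \in P$ the pair $(X,\De)$ is klt with $\ka(K_F+\De|_F) \geq 0$ on a general fibre $F$, so Theorem \ref{thm: HX13} furnishes a good minimal model $(X/S,\De) \dto (Y_\De/S,\De_{Y_\De})$; by Lemma \ref{le: lift to iso in codim 1} I may assume $Y_\De$ is $\Qq$-factorial. Finiteness, up to isomorphism over $S$, of the set of $Y_\De$'s obtained in this way is the relative analogue of \cite[Corollary 1.1.5]{BCHM10} and follows from a standard BCHM-style argument under the existence of good minimal models in dimension $\dim(X/S)$; equivalently one can extract it a posteriori from the compactness step described below.

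For every birational contraction $\phi : X \dto Y/S$ arising in this way, set
\[
P_\phi := \{\, B \in P \mid (Y/S,\phi_*B) \text{ is a minimal model of } (X/S,B) \,\}.
\]
I fix $\De \in P_\phi$ and describe $P_\phi$ near $\De$. The log-discrepancy inequality $a(D;X,B) \geq a(D;Y,\phi_*B)$ is affine linear in $B$, is an equality for divisors $D$ non-exceptional for $\phi$, and is strict at $B=\De$ on every $\phi$-exceptional divisor by the negativity lemma applied to $\phi$; so it carves out a rational polytope neighbourhood of $\De$. Klt-ness of $(Y,\phi_*B)$ is an open condition on $B$. The nef condition $K_Y+\phi_*B$ nef$/S$ is controlled by the relative cone theorem applied to the klt pair $(Y,\De_Y)$: only finitely many extremal rays of the relative Mori cone of $Y/S$ are $(K_Y+\De_Y)$-trivial, while on all other extremal rays $K_Y+\De_Y$ is strictly positive and small perturbations of $B$ preserve positivity there. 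Demanding non-negativity of $K_Y+\phi_*B$ on the $(K_Y+\De_Y)$-trivial rays gives a finite set of rational linear inequalities in $B$. Combining the three descriptions produces a rational polyhedral neighbourhood of $\De$ inside $P_\phi$; since $\De$ was arbitrary and only finitely many $\phi$ occur, $P_\phi$ is globally a rational polytope.

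The conclusion is now immediate: take the common refinement in $P$ of the boundaries of the finitely many $P_\phi$, and let the $Q_i^\circ$ be the relative interiors of the cells of the resulting polyhedral decomposition. For every $\phi$ from the finite list and every cell, either $Q_i^\circ \subset P_\phi$ or $Q_i^\circ \cap P_\phi = \emptyset$, which is exactly the required constancy of minimal-model structure. The main obstacle is the nef step in the polyhedral description of $P_\phi$: one must rule out new $(K_Y+\phi_*B)$-negative extremal rays appearing as $B$ moves away from $\De$. This is precisely where the klt hypothesis on $(Y,\De_Y)$ and the relative cone theorem enter, reducing the question to a finite collection of rational linear inequalities supplied by the $(K_Y+\De_Y)$-trivial rays. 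The remaining ingredients---discrepancy comparison via the negativity lemma, openness of klt-ness, and the compactness covering---are standard.
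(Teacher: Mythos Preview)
The paper does not contain a proof of this theorem; it is quoted from \cite[Theorem 3.4]{SC11} with a pointer to \cite{LZ22} for details, so there is no in-paper argument to compare your attempt against.

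On its own merits, your plan has the right shape but two genuine gaps. First, the finiteness of the contractions $\phi$ is not established. The BCHM corollary you cite requires $\De$ (or $K_X+\De$) to be big over $S$, which is not assumed here, and your compactness fallback only produces a finite \emph{cover} of $P$ by polytopes attached to one \emph{chosen} minimal model per point. The theorem, however, demands constancy of the \emph{entire} set of minimal models on each cell: if $\psi$ is some other minimal model of $(X/S,B)$ not on your finite list, you have not shown that $\psi$ remains a minimal model of $(X/S,D)$ for all $D$ in the same cell. Since already a single $B$ with $K_X+B\sim_{\Qq}0/S$ admits infinitely many minimal models (every small $\Qq$-factorial modification of $X$), the collection of all $\phi$'s is genuinely infinite, and the common-refinement step does not terminate as written.

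Second, the nef step is incorrect. When $K_Y+\De_Y$ is nef$/S$ the cone theorem says nothing about the $(K_Y+\De_Y)$-trivial face of $\overline{\mathrm{NE}}(Y/S)$; that face is $\overline{\mathrm{NE}}(Y/Z)$ for the semi-ample fibration $g\colon Y\to Z$, and since $K_Y+\De_Y\equiv 0/Z$ this is the Mori cone of a relative Calabi-Yau, which need not have finitely many extremal rays. Likewise ``small perturbations preserve positivity on the other rays'' is not uniform: rays $R$ with $(K_Y+\De_Y)\cdot R$ arbitrarily small can flip sign under arbitrarily small perturbations of $B$. Both issues are repaired by the idea missing from your sketch---using semi-ampleness of $K_Y+\De_Y$ (this is where the good-minimal-model hypothesis enters) to pass to the canonical model $Z$ and work relatively over $Z$, where one can absorb $g^*(\text{ample})$ into the boundary, recover bigness, and then invoke genuine BCHM finiteness; an induction on $\dim P$ closes the argument.
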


\begin{theorem}[{\cite[Theorem 2.6]{LZ22}}]\label{thm: Shokurov-Choi}
Let $(X,\De)\to S$ be a klt Calabi-Yau fiber space. Assume that good minimal models of effective klt pairs exist in dimension $\dim(X/S)$. Let $P \subset \Eff(X/S)_\Qq$ be a rational polyhedral cone. Then $P$ is a finite union of open rational polyhedral cones $P = \sqcup_{i=0}^m P^\circ_i$ such that whenever
\begin{enumerate}
\item $B, D$ are effective divisors with $[B], [D] \in P^\circ_i$, and
\item $(X, \De+\ep B), (X, \De+\ep D)$ are klt for some $\ep \in \Rr_{> 0}$,
\end{enumerate} 
then if $(Y/S, \De_Y+\ep B_Y)$ is a minimal model of $(X/S, \De+\ep B)$, then $(Y/S, \De_Y+\ep D_Y)$ is a minimal model of $(X/S, \De+\ep D)$.
\end{theorem}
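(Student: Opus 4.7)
The plan is to reduce to the (absolute) Shokurov polytope theorem (Theorem \ref{thm: Shokurov}) by packaging $P$ as a rational polytope of boundary divisors, and then to radialize the resulting polytope decomposition into a cone decomposition of $P$ via a scale-invariance property of minimal models specific to the Calabi-Yau setting. First I will choose effective $\Qq$-divisor generators $D_1,\dots,D_k$ of $P$ and, after rescaling each $D_i$ by a small positive rational constant (which does not alter the cone $P$), arrange that $(X,\De+\sum_i D_i)$ is klt. Fix a small rational $\delta\in(0,1]$, set
\[
\Sigma_\delta \coloneqq \Bigl\{(t_1,\dots,t_k)\in\Rr_{\geq 0}^k : \sum_i t_i = \delta\Bigr\},
\qquad
P'' \coloneqq \Bigl\{\De+\sum_{i=1}^k t_i D_i : (t_i)\in\Sigma_\delta\Bigr\}.
\]
Expanding into prime components, every $\Theta\in P''$ has coefficients in $[0,1)$, so $P''\subset \oplus_j [0,1)F_j$; moreover $(X,\Theta)$ is klt, and $K_X+\Theta\sim_\Rr \sum t_i D_i/S$ restricts to an effective divisor on the generic fiber, giving $\ka(K_F+\Theta|_F)\geq 0$.

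Theorem \ref{thm: Shokurov} therefore decomposes $P''=\sqcup_m Q_m^\circ$ into open rational polytopes on which the minimal model (and its birational contraction $X\dto Y$) is constant. Translating yields $\Sigma_\delta=\sqcup_m R_m^\circ$ with $R_m^\circ = Q_m^\circ-\De$; taking open cones over these cells produces a decomposition $\Rr_{\geq 0}^k\setminus\{0\}=\sqcup_i C_i^\circ$ into open rational polyhedral cones, and pushing forward along $(s_i)\mapsto [\sum_i s_i D_i]$ yields the desired $P=\sqcup_i P_i^\circ$. To verify the required property, given $[B],[D]\in P_i^\circ$ and $\ep>0$ with $(X,\De+\ep B),(X,\De+\ep D)$ klt, write $B=\sum s_i D_i,\ D=\sum s_i' D_i$ with $(\ep s_i),(\ep s_i')\in C_i^\circ$. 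Normalizing to $\Sigma_\delta$ by rescaling with $\la\coloneqq \delta/(\ep\sum_i s_i)$ and $\la'\coloneqq \delta/(\ep\sum_i s_i')$ places both $\la\ep B$ and $\la'\ep D$ into the common cell $R_m^\circ$ (by construction of the cone $C_i^\circ$), so Theorem \ref{thm: Shokurov} matches their minimal models. The scale-invariance claim below then transfers the match back to $\ep B$ and $\ep D$.

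The substantive part of the argument, and the expected main obstacle, is the scale-invariance claim: if $\phi:X\dto Y$ is a birational contraction and $(Y/S,\De_Y+\mu B_Y)$ is a minimal model of $(X/S,\De+\mu B)$ for some $\mu>0$, then the same holds for every $\mu'>0$ for which $(X,\De+\mu'B)$ is klt. To prove it, I will first establish $K_Y+\De_Y\sim_\Rr 0/S$ by pushing $K_X+\De$ through $\phi$. On a common resolution $p:W\to X$, $q:W\to Y$, the divisor $\Theta\coloneqq p^*(K_X+\De)-q^*(K_Y+\De_Y)$ satisfies $q_*\Theta=0$ (so $\Theta$ is $q$-exceptional) and $\Theta\sim_\Rr 0/S$; since every $q$-contracted curve projects to a point in $S$, one deduces $\Theta\equiv 0/Y$, and applying the negativity lemma to $\pm\Theta$ (both $q$-nef) forces $\Theta=0$. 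Consequently $p^*(K_X+\De)=q^*(K_Y+\De_Y)$ as $\Rr$-divisors on $W$, whence $a(F;X,\De)=a(F;Y,\De_Y)$ for every valuation $F$. The nefness of $K_Y+\De_Y+\mu' B_Y\sim_\Rr \mu' B_Y/S$ then reduces to nefness of $B_Y/S$ (independent of $\mu'>0$), and the discrepancy inequality $a(F;X,\De+\mu'B)\geq a(F;Y,\De_Y+\mu'B_Y)$ reduces to $\mult_F p^*B\geq \mult_F q^*B_Y$ (also independent of $\mu'>0$). This establishes the scale invariance and completes the proof.
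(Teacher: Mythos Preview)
The paper does not give its own proof of this theorem; it is quoted from \cite[Theorem~2.6]{LZ22} with the reader referred there for the argument. Your overall strategy—packaging $P$ into a rational polytope of boundaries, invoking Theorem~\ref{thm: Shokurov}, and then radializing via a scale-invariance property that exploits $K_X+\De\sim_\Rr 0/S$—is the standard route and matches how the result is obtained in the cited reference.

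There is, however, a genuine gap in your verification step. The theorem requires the conclusion for \emph{arbitrary} effective divisors $B,D$ with $[B],[D]\in P_i^\circ$, but when you ``write $B=\sum s_iD_i$'' you are tacitly replacing $B$ by the specific representative $\sum s_iD_i$; in general one only has $B\equiv\sum s_iD_i/S$, and these need not coincide as divisors. A related issue: the linear map $(s_i)\mapsto[\sum s_iD_i]$ need not be injective when the classes $[D_i]$ are linearly dependent (which happens whenever $P$ is not simplicial), so pushing forward your cells $C_i^\circ$ need not produce a \emph{disjoint} decomposition $P=\sqcup_iP_i^\circ$.

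Both problems are repaired by a \emph{numerical} invariance of minimal models, proved by exactly the same negativity trick you use for scale invariance: if $B\equiv B'/S$ are effective and $(X,\De+\ep B)$, $(X,\De+\ep B')$ are klt, set $\Xi\coloneqq p^*(B-B')-q^*(B_Y-B'_Y)$; then $q_*\Xi=0$ and $\Xi\equiv 0/S$ (hence $\equiv 0/Y$), so $\Xi=0$ by the negativity lemma, and both the nefness condition and the discrepancy inequality transfer from $B$ to $B'$. With this in hand you may legitimately pass from an arbitrary $B$ to $\sum s_iD_i$, and for the decomposition you can either triangulate $P$ into simplicial subcones first (so that on each piece the generators are independent and the map to $N^1(X/S)$ is injective), or argue that the Shokurov cells, being constant along numerical fibers, descend to a well-defined finite decomposition of $P$. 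One minor slip: in your final sentence the discrepancy inequality reduces to $\mult_F p^*B\le \mult_F q^*B_Y$, not $\ge$.
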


We need the following consequence of Theorem  \ref{thm: Shokurov-Choi}.

\begin{lemma}\label{le: inclusion implies equal}
Let $f: (X,\De) \to S$ be a klt Calabi-Yau fiber space. Assume that good minimal models exist for effective klt pairs in dimension $\dim(X/S)$. Let $Q \subset \Eff(X/S)$ be a rational polyhedral cone, and $\Gamma \subset \PsAut(X/S, \De)$ be a subgroup. Then there exists a rational polyhedral subcone $P \subset Q \cap \Mov(X/S)$ such that 
\begin{enumerate}
\item $\Gamma \cdot (P \cap N^1(X/S)_\Qq) = (\Gamma \cdot Q)\cap \Mov(X/S)_\Qq$, and
\item $\Gamma \cdot P = (\Gamma \cdot Q)\cap \Mov(X/S)$.
\end{enumerate}
\end{lemma}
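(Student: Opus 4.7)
The plan is to exploit the Shokurov--Choi chamber decomposition of $Q$ together with the $\PsAut(X/S,\De)$-invariance of $\Mov(X/S)$. Because every $g\in\PsAut(X/S,\De)$ is an isomorphism in codimension one, the pushforward $g_*$ preserves movability, so $\Mov(X/S)$ and $\Mov(X/S)_\Qq$ are $\Gamma$-invariant, and hence $(\Gamma\cdot Q)\cap\Mov(X/S)=\Gamma\cdot(Q\cap\Mov(X/S))$, with the analogous identity on rational points. Thus the lemma reduces to exhibiting a rational polyhedral cone $P\subset Q\cap\Mov(X/S)$ satisfying $Q\cap\Mov(X/S)\subset P$; the plan is to take $P=Q\cap\bMov(X/S)$ and to show that this intersection is rational polyhedral.

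First, apply Theorem \ref{thm: Shokurov-Choi} to $Q$, producing a finite disjoint decomposition $Q=\bigsqcup_{i=0}^m P_i^\circ$ into open rational polyhedral cones such that, for any effective $B$ with $[B]\in P_i^\circ$ and small $\ep>0$, the minimal model $g_i:X\dto Y_i/S$ of $(X/S,\De+\ep B)$ depends only on $i$. Next, establish a dichotomy for each chamber. If $g_i$ is an isomorphism in codimension one, then Lemma \ref{le: lift to iso in codim 1} lets one take $Y_i$ isomorphic to $X$ in codimension one; combined with $K_X+\De\sim_\Rr 0/S$, this forces $g_i\cdot[B]$ to be nef on $Y_i$ and therefore movable, so $\overline{P_i^\circ}\subset\bMov(X/S)$. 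If instead $g_i$ contracts a divisor, then $P_i^\circ\cap\Mov(X/S)_\Qq=\emptyset$: any rational movable class in $P_i^\circ$ is representable by an effective movable $\Qq$-divisor, and an MMP on this representative cannot contract divisors, contradicting that $g_i$ is the common minimal model for $P_i^\circ$ provided by Theorem \ref{thm: Shokurov-Choi}.

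Set $I=\{i:g_i\text{ is iso in codim one}\}$, so that $Q\cap\bMov(X/S)=\bigcup_{i\in I}\overline{P_i^\circ}$. Being a convex finite union of rational polyhedral cones, this set is itself a rational polyhedral cone by the Minkowski--Weyl theorem applied to its finitely many extreme rays. Taking $P$ to be this cone, equalities (1) and (2) both follow from the invariance observation of the first paragraph together with $Q\cap\Mov(X/S)\subset P\subset Q$. The main obstacle is the dichotomy in the second step, which demands a careful translation between numerical movability of a class and divisorial behaviour of an associated MMP; in particular, producing an effective movable representative of a movable rational class and verifying that the corresponding MMP consists only of flips are the key technical points. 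A secondary subtlety is monitoring the distinction between $\Mov(X/S)$ and $\bMov(X/S)$ in passing from~(1) to~(2), which is handled via density of rational points in the relevant rational polyhedral cones.
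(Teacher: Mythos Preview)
Your approach is essentially the same as the paper's: apply the Shokurov--Choi decomposition of $Q$ and use Lemma \ref{le: lift to iso in codim 1} to sort the chambers. However, there is a genuine gap in your dichotomy. You fix one minimal model $g_i$ per chamber and then argue that if $g_i$ contracts a divisor, a rational movable class in $P_i^\circ$ would produce (via an MMP consisting only of flips) a small minimal model, ``contradicting that $g_i$ is the common minimal model.'' This is not a contradiction: Theorem \ref{thm: Shokurov-Choi} only says that any minimal model for one class in the chamber serves for all of them, not that the minimal model is unique. In the weak sense used here there can simultaneously be a divisorial minimal model $g_i$ and a small one $Y'$ (indeed Lemma \ref{le: lift to iso in codim 1} manufactures the latter from the former). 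The correct dichotomy---and this is what the paper does---is intrinsic to the chamber: either $P_i^\circ\cap\Mov(X/S)_\Qq\neq\emptyset$, in which case Lemma \ref{le: lift to iso in codim 1} supplies a small model and the semi-ampleness from Theorem \ref{thm: HX13} gives $Q_i\subset\Mov(X/S)$; or that intersection is empty. Equivalently, define $I$ by the existence of \emph{some} small minimal model, not by the nature of a fixed $g_i$.

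Two smaller points. First, ``nef therefore movable'' is not automatic; you need Theorem \ref{thm: HX13} to pass from nef to semi-ample on $Y_i$, and only then conclude that the strict transform on $X$ is movable (so that $\overline{P_i^\circ}\subset\Mov(X/S)$, not merely $\bMov(X/S)$, which is what the lemma requires). Second, your equality $Q\cap\bMov(X/S)=\bigcup_{i\in I}\overline{P_i^\circ}$ needs the stronger complementary statement that for $i\notin I$ one has $P_i^\circ\cap\bMov(X/S)=\emptyset$, not merely $P_i^\circ\cap\Mov(X/S)_\Qq=\emptyset$; this does follow, but from the contrapositive of Lemma \ref{le: lift to iso in codim 1} applied to any $[D]\in P_i^\circ$ (rational or not), rather than from your MMP argument. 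Once the dichotomy is corrected in this way, your construction of $P$ and the paper's $\Cone(\bigcup_{i\in I}Q_i)$ coincide and the rest of your outline goes through.
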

\begin{proof}
We only show the first item as the second item can be proved by the same argument. The argument below is similar to the proof of \cite[Lemma 5.2]{LZ22}.

Let $Q = \sqcup_{i=1}^m Q_i^\circ$ be the decomposition as in Theorem \ref{thm: Shokurov-Choi} such that for effective divisors $B, D$ with $[B], [D] \in Q_i^\circ$ and $1 \gg \ep >0$, the klt pairs $(X, \De+\ep B)$ and $(X, \De+\ep D)$ share same minimal models. We claim that if $Q^\circ_i \cap \Mov(X/S)_\Qq \neq \emptyset$, then $(Q_i \cap N^1(X/S)_\Qq)\subset \Mov(X/S)_\Qq$. Let $h: X \dto Y/S$ be a minimal model of $(X, \De+\ep D)$ for some $[D] \in Q^\circ_i \cap \Mov(X/S)_\Qq \neq \emptyset$ and $1\gg \ep>0$. By Lemma \ref{le: lift to iso in codim 1}, we can assume that $h$ is isomorphic in codimension 1. Then for any $B \geq 0$ such that $[B] \in Q_i \cap N^1(X/S)_\Qq$, $h$ is also a minimal model for $(X, \De+\ep' B)$ with $1\gg\ep'>0$. By Theorem \ref{thm: HX13}, $K_Y+\De_Y+\ep'B'_Y \sim_\Qq \ep' B_Y/S$ is semi-ample over $S$. Thus, $B$, as the strict transform of $B_Y$, is movable over $S$.  

Set 
\[
P \coloneqq \Cone(\cup_i Q_i \mid (Q^\circ_i \cap \Mov(X/S)_\Qq \neq \emptyset) \subset Q.
\] Then $P$ is a rational polyhedral cone and $P \subset \Mov(X/S)$ by the previous claim. If $[D] \in (\Gamma \cdot Q) \cap \Mov(X/S)_\Qq$, then there exists $\gamma \in \Gamma$ such that $\gamma\cdot [D] \in Q$ and thus lies in $Q^\circ_i$ for some $i$. In particular, $[D] \in \gamma^{-1}\cdot P$. This shows $\Gamma\cdot (P\cap N^1(X/S)_\Qq) \supset (\Gamma \cdot Q) \cap\Mov(X/S)_\Qq$. The inverse inclusion follows from $P \subset Q$.
\end{proof}

The following result is well-known.

\begin{lemma}\label{le: bir=pseudoauto}
If $(X/S, \De)$ has terminal singularities and $K_X+\De$ is nef$/S$, then $\Bir(X) = \PsAut(X)$.
\end{lemma}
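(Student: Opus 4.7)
The plan is to argue by contradiction via the negativity lemma on a common resolution. Suppose $g\in\Bir(X/S,\De)$ is not a pseudo-automorphism; replacing $g$ by $g^{-1}$ if necessary, $g$ contracts a prime divisor $D\subset X$. By the hypothesis $g_{*}\Supp\De=\Supp\De$, $g$ permutes the prime components of $\De$ and therefore does not contract any of them, so $D\not\subset\Supp\De$. Take a smooth common resolution $p\colon W\to X$ and $q:=g\circ p\colon W\to X$ over $S$, and let $\tilde D\subset W$ be the strict transform of $D$ under $p$. Then $\tilde D$ is not $p$-exceptional (since $p(\tilde D)=D$) but is $q$-exceptional (since $g(D)$ has codimension $\geq 2$ in $X$).

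The core comparison is the $\Rr$-Cartier divisor
\[
F := p^{*}(K_{X}+\De) - q^{*}(K_{X}+g_{*}\De)
\]
on $W$. Using the standard log-discrepancy expansion
\[
p^{*}(K_{X}+\De) = K_{W} + p^{-1}_{*}\De - \sum_{E\;p\text{-exc}}\bigl(a(E;X,\De)-1\bigr)E
\]
and its $q$-analogue, the coefficient of $\tilde D$ in $F$ works out to
\[
\mathrm{mult}_{D}(\De) + \bigl(a(\tilde D;X,g_{*}\De)-1\bigr) = a(\tilde D;X,g_{*}\De)-1,
\]
since $D\not\subset\Supp\De$. This quantity is strictly positive because $(X,g_{*}\De)$ is terminal (it shares the support of $\De$ with permuted multiplicities, and one reduces to the case $g_{*}\De=\De$ either by replacing $g$ by a suitable power or by a $\langle g\rangle$-averaging argument preserving both terminality and nefness) and $\tilde D$ is exceptional over $X$ via $q$.

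On the other hand, $p^{*}(K_{X}+\De)$ is $p$-numerically trivial while $q^{*}(K_{X}+g_{*}\De)$ is $p$-nef: any curve $C$ contracted by $p$ satisfies $f\circ q(C)=f\circ p(C)=\text{pt}$, so $q(C)$ lies in an $f$-fibre, where $K_{X}+g_{*}\De$ is nef (being numerically equivalent over $S$ to $K_{X}+\De$). Hence $-F$ is $p$-nef, and a direct calculation using that $g$ contracts no component of $\De$ yields $p_{*}F=0$. The negativity lemma (\cite[Lemma~3.39]{KM98}) then forces $-F\geq 0$, i.e.\ $F\leq 0$, contradicting the strictly positive coefficient of $\tilde D$ computed above. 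The main obstacle is precisely the parenthetical reduction to $g_{*}\De=\De$---ensuring that both terminality and nefness survive the passage from $\De$ to $g_{*}\De$ when only $\Supp\De$ is preserved a priori---which is a standard manoeuvre leveraging the permutation action of $g$ on the components of $\De$ but needs to be spelled out before the log-discrepancy identities can be invoked as written.
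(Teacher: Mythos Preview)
Your strategy—common resolution plus the negativity lemma—is the paper's strategy, but the execution breaks at the final step.

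\textbf{The sign in the negativity lemma is wrong.} From $-F$ being $p$-nef and $p_*F\geq 0$ (or $=0$), Lemma~3.39 of Koll\'ar--Mori gives $F\geq 0$, \emph{not} $-F\geq 0$. So there is no contradiction with the positive coefficient of $\tilde D$ you computed; that coefficient being positive is perfectly compatible with $F\geq 0$. Relatedly, your claim $p_*F=0$ is not correct: a short calculation (using $q^{-1}_*(g_*\De)=p^{-1}_*\De$) shows
\[
p_*F=\sum_{E\ q\text{-exc, not }p\text{-exc}}\bigl(a(E;X,g_*\De)-1\bigr)\,p(E),
\]
which is effective (assuming terminality of $(X,g_*\De)$) but vanishes only if $\Exc(q)\subset\Exc(p)$—exactly what you are trying to prove.

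\textbf{How the paper closes the argument.} The paper compares $p^*(K_X+\De)$ with $q^*(K_X+\De)$ (same boundary on both sides), writing $K_W+\De_W=p^*(K_X+\De)+E=q^*(K_X+\De)+F$ with $E,F\geq 0$ supported exactly on $\Exc(p),\Exc(q)$ by terminality. Then negativity is applied \emph{twice}: over $p$ one gets $F\geq E$, and symmetrically over $q$ one gets $E\geq F$; hence $E=F$ and $\Exc(p)=\Exc(q)$. Your one-sided application cannot produce the needed two-sided bound. Your detour through $q^*(K_X+g_*\De)$ is unnecessary and is what generates the acknowledged side-issue about terminality of $(X,g_*\De)$; working with $\De$ on both sides, as the paper does, avoids this entirely. (Your suggested reduction ``replace $g$ by a power'' so that $g_*\De=\De$ is valid—if $g$ contracts a divisor then so does $g^N$—and after that reduction your $F$ coincides with the paper's $F-E$; but you still need the symmetric negativity step.) The averaging alternative you mention does not obviously preserve terminality, since that would require each $(X,g^k_*\De)$ to be terminal, which is the very point at issue.
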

\begin{proof}
Replacing $(X/S, \De)$ by a small $\Qq$-factorial modification, we can assume that $X$ is $\Qq$-factorial. Let $g: X \dto X$ be birational and $p, q: W \to X$ be birational resolutions such that $g=q \circ p^{-1}$. Let $\De_W$ be the strict transform of $\De$. Let $\Exc(-)$ denote the exceptional locus of a birational morphism. As $(X/S, \De)$ has $\Qq$-factorial terminal singularities, $K_W+\De_W=p^*(K_X+\De) +E$ and  $K_W+\De_W=q^*(K_X+\De) +F$ with $E, F \geq 0$ and $\Supp E =\Exc(p), \Supp F =\Exc(q)$. Thus $p^*(K_X+\De) =q^*(K_X+\De)$ by the negativity lemma, and $\Exc(p)=\Exc(q)$. Hence 
 \[
X \backslash p(\Exc(p))\stackrel{p^{-1}}{\simeq} W \backslash \Exc(p)=W \backslash \Exc(q) \stackrel{q}{\simeq} X \backslash q(\Exc(q)),
 \] and $g$ is isomorphic in codimension 1.
 \end{proof}

\section{Geometry of convex cones}\label{sec: Geometry of convex cones}

Let $V(\Zz)$ be a lattice and $V(\Qq) \coloneqq V(\Zz) \otimes_\Zz \Qq$, $V  \coloneqq V(\Qq) \otimes_\Qq \Rr$. A cone $C \subset V$ is non-degenerate if it does not contain an affine line. This is equivalent to saying that its closure $\bar C$ does not contain a non-trivial vector space.

In the following, we assume that $\Gamma$ is a group and $\rho: \Gamma \to {\rm GL(V)}$ is a group homomorphism. The group $\Gamma$ acts on $V$ through $\rho$. For a $\gamma \in \Gamma$ and an $x \in V$, we write $\gamma \cdot x$ or $\gamma x$ for the action. For a set $S\subset V$, set $\Gamma \cdot S \coloneqq \{\gamma \cdot x \mid  \gamma \in \Gamma, x\in S\}$. Suppose that this action leaves a convex cone $C$ and some lattice in $V(\Qq)$ invariant. We assume that $\dim C=\dim V$. The following definition slightly generalizes \cite[
Proposition-Definition 4.1]{Loo14}.

\begin{definition}\label{def: polyhedral type} 
Under the above notation and assumptions.
\begin{enumerate}
\item Suppose that $C \subset V$ is an open convex cone (may be degenerate). Let
\[
C_+ \coloneqq \Conv(\bar C \cap V(\Qq))
\] be the convex hull of rational points in $\bar C$. 

\item We say that $(C_+, \Gamma)$ is of polyhedral type if there is a polyhedral cone $\Pi \subset C_+$ such that $\Gamma \cdot \Pi \supset C$.
\end{enumerate}
\end{definition}

For a set $S \subset V$, $[S]$ denotes the convex hull of $S$ in $V$. A point of a convex set that makes up a face by itself is also called an extreme point.

\begin{proposition}[{\cite[Proposition-Definition 4.1]{Loo14}}]\label{prop: prop-def}
Under the above notation and assumptions. If $C$ is an open non-degenerate cone, then the following conditions are equivalent:
\begin{enumerate}
\item there exists a polyhedral cone $\Pi \subset C_+$ with $\Gamma \cdot  \Pi = C_+$;
\item there exists a polyhedral cone $\Pi \subset C_+$ with $\Gamma \cdot  \Pi \supset C$;
\item there exists a polyhedral cone $\Pi \subset C_+$ with $\Gamma \cdot  \Pi \supset C\cap V(\Qq)$;
\item For every $\Gamma$-invariant lattice $L \subset V(\Qq)$, $\Gamma$ has finitely many orbits in the set of extreme points of $[C \cap L]$.
\end{enumerate}
Moreover, in case (2), we necessarily have $\Gamma \cdot  \Pi = C_+$.
\end{proposition}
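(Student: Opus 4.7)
The equivalence (1) $\Leftrightarrow$ (2) $\Leftrightarrow$ (4) is essentially Looijenga's original Proposition-Definition 4.1 in \cite{Loo14}; the new content in this refinement is the rational-point condition (3), which sits strictly between (2) and (4). The plan is to close the cycle (1) $\Rightarrow$ (2) $\Rightarrow$ (3) $\Rightarrow$ (4) $\Rightarrow$ (1), invoking Looijenga for (4) $\Rightarrow$ (1) and the closing equality, and supplying the remaining implications here.

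For (1) $\Rightarrow$ (2), I would first verify the inclusion $C \subset C_+$. Given $x \in C$, openness of $C$ produces a neighborhood $B \subset C$ of $x$, inside of which one can choose finitely many rational points whose convex hull contains $x$ in its relative interior; since these rational points lie in $C \subset \bar{C}$, $x$ is a convex combination of elements of $\bar{C} \cap V(\Qq)$, hence $x \in C_+$. Combined with $\Gamma \cdot \Pi = C_+$, this gives $\Gamma \cdot \Pi \supset C$. The implication (2) $\Rightarrow$ (3) is then immediate from $C \cap V(\Qq) \subset C$.

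For (3) $\Rightarrow$ (4), fix a $\Gamma$-invariant lattice $L \subset V(\Qq)$ and let $p$ be an extreme point of $[C \cap L]$. Because $p \in C \cap L \subset C \cap V(\Qq)$, condition (3) supplies $\gamma \in \Gamma$ with $\gamma \cdot p \in \Pi$, and since $\Gamma$ preserves both $C$ and $L$ it permutes the extreme points of $[C \cap L]$; hence every orbit of extreme points meets $\Pi \cap L$. It therefore suffices to show $\Pi \cap L$ contains only finitely many extreme points of $[C \cap L]$. Here I would exploit the non-degeneracy of $C$: the cone $\bar{C}$ is pointed, so it admits a rational linear functional $\ell \in V(\Qq)^*$ that is strictly positive on $\bar{C} \setminus \{0\}$. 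For a lattice point $p$ in $\Pi \cap C$ with $\ell(p)$ sufficiently large, the polyhedral shape of $\Pi$ guarantees a neighborhood of $p$ inside $\Pi \cap C$ whose radius grows with $\ell(p)$; for $\ell(p)$ above a threshold depending on the generators of $L$ and the shape of $\Pi$, this neighborhood contains a nontrivial lattice vector $w$ with $p \pm w \in C \cap L$, contradicting the extremity of $p$. This bounds $\ell$ uniformly on the extreme points in $\Pi$, and discreteness of $L$ yields finiteness.

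The remaining implication (4) $\Rightarrow$ (1), together with the concluding equality in case (2), follows from Looijenga's original construction in \cite[Proposition-Definition 4.1]{Loo14}: one selects representatives $p_1, \ldots, p_N$ of the finitely many orbits of extreme points of $[C \cap L]$ and assembles $\Pi$ from the rational polyhedral cones spanned by subcollections of the $p_i$ that correspond to faces of $[C \cap L]$; an arbitrary point of $C_+$ is then written as a rational convex combination of extreme points of $[C \cap L]$ and each summand is transported into $\Pi$ by a suitable element of $\Gamma$. The main obstacle is the finiteness step inside (3) $\Rightarrow$ (4); the delicate point there is that $\Pi \subset C_+$ can touch $\partial \bar{C}$ along rays through rational boundary generators, so the perturbation vector $w$ must be chosen in a way that simultaneously respects the polyhedral shape of $\Pi$, the openness of $C$, and the discreteness of $L$.
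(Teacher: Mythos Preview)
Your overall plan matches the paper's: the equivalence of (1), (2), (4) and the final equality are deferred to \cite{Loo14}, and the only new content is (3) $\Rightarrow$ (4), which you correctly reduce to showing that the set $S\cap\Pi$ of extreme points of $[C\cap L]$ lying in $\Pi$ is finite.

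The gap is in your finiteness argument. Your claim that ``the polyhedral shape of $\Pi$ guarantees a neighborhood of $p$ inside $\Pi\cap C$ whose radius grows with $\ell(p)$'' is false as stated: take $C$ the open first quadrant in $\Rr^2$, $\Pi=\bar C$, $\ell(x,y)=x+y$, and $p=(n,1)$; then $\ell(p)\to\infty$ but the largest ball about $p$ inside $\Pi\cap C$ has radius $1$. You recognise the difficulty in your last paragraph but do not resolve it; the linear-functional heuristic gives no control on how close $p$ may sit to $\partial C$.

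The paper (following Looijenga) bypasses this by choosing the perturbation vector \emph{canonically from $\Pi$} rather than from a ball. Let $v_1,\dots,v_r$ be primitive lattice generators of the extremal rays of $\Pi$ and write $e=\sum_i\lambda_i v_i$ with $\lambda_i\ge 0$. If $e\in S\cap\Pi$ is extreme and some $\lambda_j>1$, then $e+v_j\in C$ (interior plus closure point of a convex cone) and
\[
e-v_j=\Bigl(1-\tfrac{1}{\lambda_j}\Bigr)e+\tfrac{1}{\lambda_j}\sum_{i\ne j}\lambda_i v_i
\]
is a convex combination of $e\in C$ and a point of $\Pi\subset\bar C$, hence $e-v_j\in C$ as well. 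Both $e\pm v_j$ lie in $C\cap L$, contradicting extremity of $e$. Thus every $\lambda_i\le 1$, so $S\cap\Pi$ lies in the compact set $\{\sum_i\lambda_i v_i:0\le\lambda_i\le 1\}$ and is finite by discreteness of $L$. This is exactly the ``delicate point'' you flag: the fix is not a cleverer ball, but the specific choice $w=v_j$ along an extremal ray of $\Pi$, which forces $e-w\in C$ via the displayed convex-combination identity.
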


\begin{proof}
\cite[Proposition-Definition 4.1]{Loo14} has shown that (1) (2) (4) are equivalent. It is straightforward to get (3) from (2). Hence it suffices to show that (3) implies (4). This can be shown by the same argument as \cite[Proposition-Definition 4.1]{Loo14} in showing (2) implies (4). We copy the argument below (with minor modifications and some explanations) for the convenience of the reader.

We prove that (3) implies (4). Without loss of generality, we may assume that $\Pi$ is rationally polyhedral. Let $S$ denote the set of extreme points of $[C \cap L]$. In Step 1 of the proof of \cite[Theorem 2.2]{Loo14}, it is shown that $[C \cap L] = [C \cap L]+\bar C$. Then \cite[Lemma 1.6]{Loo14} implies that every extreme point of $[C \cap L]$ belongs to $C \cap L \subset V(\Qq)$. 

As $S$ is $\Gamma$-invariant and $\Gamma \cdot \Pi \supset C\cap V(\Qq)\supset S$, to show (4), it suffices to show that $S \cap \Pi$ is finite. Let $v_1,\cdots, v_r$ denote the set of primitive integral generators of the extremal rays of $\Pi$. Any $e \in S \cap \Pi$ has the property that 
\begin{equation}\label{eq: not in}
e-v_i \notin C \cap\Pi
\end{equation} for all $i$. Otherwise, suppose that $e-v_i \in C \cap\Pi\cap L$. By $v_i \in \Pi \subset C_+$ and $e \in C$ with $C$ open, we have $e+v_i \in C \cap L$. Hence 
\[
e = \frac 1 2 (e-v_i)+\frac 1 2 (e+v_i) \in [C \cap L].
\] This contradicts that $e$ is an extreme point of $[C\cap L]$.

Now \eqref{eq: not in} implies that if we write $e = \sum_{i=1}^r \lambda_iv_i$ with $\lambda_i \geq 0$, then $\lambda_i \leq 1$ for all $i$. So $S\cap \Pi$ is contained in a compact set and hence finite.

\end{proof}

\begin{definition}\label{def: fundamental domain}
Let $\rho: \Gamma \hookrightarrow {\rm GL}(V)$ be an injective group homomorphism and $C \subset V$ be a cone (may not necessarily be open). Let $\Pi \subset C $ be a (rational) polyhedral cone. Suppose that $\Gamma$ acts on $C$. Then $\Pi$ is called a weak (rational) polyhedral fundamental domain for $C$ under the action $\Gamma$ if 
\begin{enumerate}
\item $\Gamma \cdot \Pi = C$, and
\item for each $\gamma \in \Gamma$, either $\gamma \Pi = \Pi$ or $\gamma\Pi \cap \Int(\Pi) = \emptyset$.
\end{enumerate}

Moreover, let $\Gamma_\Pi\coloneqq \{\gamma \in \Gamma \mid \gamma \Pi = \Pi\}$. If $\Gamma_\Pi=\{{\rm Id}\}$, then $\Pi$ is called a (rational) polyhedral fundamental domain.
\end{definition}

See \cite[Lemma 3.5]{LZ22} for the following application of {\cite[Theorem 3.8 \& Application 4.14]{Loo14}}.

\begin{lemma}[{\cite[Theorem 3.8 \& Application 4.14]{Loo14}}]\label{le: existence of fun domain}
Under the notation and assumptions of Definition \ref{def: polyhedral type}. Suppose that $\rho: \Gamma \hookrightarrow {\rm GL}(V)$ is injective. Let $(C_+, \Gamma)$ be of polyhedral type with $C$ non-degenerate. Then under the action of $\Gamma$, $C_+$ admits a rational polyhedral fundamental domain.
\end{lemma}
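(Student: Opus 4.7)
The plan is to deduce the lemma from Looijenga's general machinery in \cite[Thm.~3.8 \& App.~4.14]{Loo14}, with Proposition \ref{prop: prop-def} as the key combinatorial input. Since $(C_+, \Gamma)$ is of polyhedral type, Proposition \ref{prop: prop-def} immediately supplies a rational polyhedral cone $\Pi_0 \subset C_+$ with $\Gamma \cdot \Pi_0 = C_+$, and guarantees that for every $\Gamma$-invariant lattice $L \subset V(\Qq)$, the extreme points of $[C \cap L]$ all lie in $V(\Qq)$ and comprise only finitely many $\Gamma$-orbits. This gives both the rational covering and the finiteness needed downstream.

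Next I would check that $\Gamma$ acts properly discontinuously on $\Int(\bar C)$. Since $C$ is open and non-degenerate, $\bar C$ contains no affine line, so $\Int(\bar C)$ carries a canonical $\Gamma$-invariant Riemannian structure, for instance the Hessian of the logarithm of the Koszul--Vinberg characteristic function of $\bar C$. Combined with the injectivity of $\rho$ and the discreteness of $\Gamma \subset {\rm GL}(V(\Zz))$, this forces the action on $\Int(\bar C)$ to be properly discontinuous; in particular the collision set $F \coloneqq \{\gamma \in \Gamma \mid \gamma \Pi_0 \cap \Pi_0 \neq \emptyset\}$ is finite.

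With these inputs, the Dirichlet construction of \cite[Thm.~3.8]{Loo14} produces the desired $\Pi$: pick a rational point $x_0 \in \Int(\Pi_0)$ with trivial $\Gamma$-stabilizer (generic, by discreteness and injectivity of $\rho$), and define
\[
\Pi \coloneqq \{v \in C_+ \mid \langle v, x_0 \rangle \geq \langle v, \gamma x_0 \rangle \text{ for all } \gamma \in F\}
\]
relative to the invariant bilinear form. The finiteness of $F$ and the rationality of $x_0$ and of the form make $\Pi$ a rational polyhedral cone. The covering property $\Gamma \cdot \Pi = C_+$ follows from $\Gamma \cdot \Pi_0 = C_+$ together with the defining Dirichlet inequalities; the tiling condition $\gamma \Pi \cap \Int(\Pi) = \emptyset$ whenever $\gamma \Pi \neq \Pi$, as well as $\Gamma_\Pi = \{{\rm Id}\}$, then follow from the standard Dirichlet argument plus the choice of $x_0$.

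The hardest step is the last one: passing from the covering $\Pi_0$ to an honest rational polyhedral tiling. A generic $\Pi_0$ exhibits wild overlaps under $\Gamma$-translates, and most smoothing procedures destroy either rationality or polyhedrality. The crucial ingredient is that Proposition \ref{prop: prop-def}(4) forces only finitely many walls $\langle \,\cdot\,, x_0 - \gamma x_0 \rangle = 0$ to be active in the Dirichlet construction, keeping $\Pi$ polyhedral; this is precisely what \cite[Thm.~3.8 \& App.~4.14]{Loo14} package in the required form.
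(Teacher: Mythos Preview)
The paper does not give a proof here; it simply cites \cite[Lemma 3.5]{LZ22} and \cite[Theorem 3.8 \& Application 4.14]{Loo14}. Your outline is in the right spirit---verify the finiteness hypotheses of Looijenga via Proposition~\ref{prop: prop-def}, then invoke his Dirichlet-type construction---and this is indeed how the cited references proceed.

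However, your explicit Dirichlet formula has a genuine error. You write $\Pi = \{v \in C_+ \mid \langle v, x_0 \rangle \geq \langle v, \gamma x_0 \rangle \text{ for all } \gamma \in F\}$ ``relative to the invariant bilinear form,'' with $x_0 \in \Int(\Pi_0) \subset V$. There is no $\Gamma$-invariant positive definite bilinear form on $V$ in general: if there were, $\Gamma \subset {\rm GL}(V(\Zz))$ would be contained in a compact orthogonal group and hence be finite, which is exactly the case we do not care about. The Hessian metric you mention earlier is $\Gamma$-invariant as a Riemannian metric on $C$, but it is not a single bilinear form on $V$, so it cannot be substituted into your linear inequalities. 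Looijenga's actual construction takes a rational point $\xi$ in the interior of the \emph{dual} cone $C^\vee \subset V^*$ with trivial stabilizer and uses the natural pairing $V \times V^* \to \Rr$, setting $\Pi = \{v \in C_+ \mid \langle v, \xi \rangle \leq \langle v, \gamma^{-1}\cdot\xi \rangle \text{ for all } \gamma\}$. The polyhedrality then comes from the fact that the linear form $\langle -, \xi\rangle$ is bounded on $[C\cap L]$ precisely when only finitely many extreme-point orbits exist (Proposition~\ref{prop: prop-def}(4)), which cuts the a priori infinite system of inequalities down to a finite one.

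A smaller gap: you assert $F = \{\gamma \mid \gamma\Pi_0 \cap \Pi_0 \neq \emptyset\}$ is finite from proper discontinuity on $\Int(\bar C)$, but $\Pi_0 \subset C_+$ may meet $\partial C$, where the action is typically not properly discontinuous. Finiteness of the relevant wall set is again a consequence of the polyhedral-type hypothesis, not of proper discontinuity alone; you gesture at this in your final paragraph, but the earlier claim as stated does not hold.
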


For a possibly degenerate open convex cone $C$, let $W \subset \overline C$ be the maximal $\Rr$-linear vector space. We say that $W$ is defined over $\Qq$ if $W = W(\Qq) \otimes_\Qq \Rr$ where $W(\Qq) = W \cap V(\Qq)$. In this case, $V/W=(V(\Qq)/W(\Qq)) \otimes_\Qq \Rr$ has a nature lattice structure, and we denote everything in $V/W$ by $\ti{(-)}$. For example, $\widetilde{(C_+)}$ is the image of $C_+$ under the projection  $p:V \to V/W$. By the maximality, $W$ is $\Gamma$-invariant, and thus $V/W, \ti C$ admit natural $\Gamma$-actions.

\begin{lemma}[{\cite[Lemma 3.7]{LZ22}}]\label{le: induced polyhedral type}
Under the above notation and assumptions, 
\begin{enumerate}
\item $\overline{\ti C} = \ti{\overline C}$,
\item $({\ti C})_+ = \widetilde{(C_+)}$, which is denoted by $\ti C_+$, and
\item if $(C_+, \Gamma)$ is of polyhedral type, then $(\ti C_+, \Gamma)$ is still of polyhedral type. More precisely, if $\Pi \subset C_+$ is a polyhedral cone with $\Gamma \cdot \Pi \supset C$, then $\ti \Pi \subset \ti C_+$ and $\Gamma \cdot \ti\Pi \supset \ti C$. 
\end{enumerate}
\end{lemma}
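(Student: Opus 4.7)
The plan is to organize the proof around a single structural observation: since $W$ is a linear subspace contained in the closed convex cone $\overline C$, the identity $\overline C + W = \overline C$ holds (a convex cone is closed under addition), so $\overline C$ is saturated under the projection $p: V \to V/W$. With the quotient topology on $V/W$, this forces $p(\overline C)$ to be closed. All three parts will fall out of this observation combined with the rationality hypothesis $W = W(\Qq) \otimes_\Qq \Rr$.

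For part (1), continuity of $p$ gives $p(\overline C) \subset \overline{p(C)} = \overline{\ti C}$, while $p(\overline C) \supset p(C) = \ti C$ together with closedness of $p(\overline C)$ gives the reverse containment.

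For part (2), since $p$ is linear it commutes with convex hulls, so it suffices to establish $p(\overline C \cap V(\Qq)) = \overline{\ti C} \cap (V/W)(\Qq)$. The inclusion $\subset$ is immediate. For $\supset$, given a rational $\ti x \in \overline{\ti C}$, the rationality of $W$ guarantees that $p$ sends $V(\Qq)$ onto $(V/W)(\Qq)$, so I lift $\ti x$ to some $y \in V(\Qq)$. By part (1), $\ti x = p(x)$ for some $x \in \overline C$, and then $y - x \in W \subset \overline C$, so $y = x + (y - x) \in \overline C + W = \overline C$ by the saturation observation. Hence $y \in \overline C \cap V(\Qq)$ is the desired rational lift, completing the argument.

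For part (3), the maximality of $W$ makes it $\Gamma$-invariant, so $p$ is automatically $\Gamma$-equivariant. Applying $p$ to $\Pi \subset C_+$ yields $\ti\Pi \subset p(C_+) = \ti C_+$ by part (2), and applying $p$ to $\Gamma \cdot \Pi \supset C$ yields $\Gamma \cdot \ti\Pi \supset \ti C$. The main conceptual obstacle is the rational lifting step in part (2): without $W$ being defined over $\Qq$, a rational point of $\overline{\ti C}$ need not admit any rational preimage in $V$, and the entire induction-on-degeneracy strategy (which this lemma exists to enable) would break down. Everything else is a routine consequence of linearity, continuity, and saturation.
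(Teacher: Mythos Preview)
Your argument is correct. The saturation observation $\overline C + W = \overline C$ is the right organizing principle: it makes $p(\overline C)$ closed (since $p$ is open and the complement of a saturated closed set maps to an open set), which gives part (1) immediately; the rational lifting in part (2) is handled exactly as needed, using both the closedness from (1) and the hypothesis that $W$ is defined over $\Qq$; and part (3) is a direct consequence of $\Gamma$-equivariance and the previous parts.

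Note that the paper does not supply its own proof of this lemma: it is quoted verbatim from \cite[Lemma 3.7]{LZ22} and used as a black box. So there is no in-paper argument to compare against; your write-up is a self-contained and clean proof of the cited result.
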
 

The following result can be shown by the same argument as \cite[Lemma 3.7]{LZ22}.

\begin{proposition}\label{prop: lift cone}
Let $W \subset \overline C$ be the maximal vector space. Suppose that $W$ is defined over $\Qq$. Let $\ti \Gamma$ be the image of the natural group homomorphism $\Gamma \to {\rm GL}(V/W)$. If $(\ti C_+, \ti\Gamma)$ is of polyhedral type, then there is a rational polyhedral cone $\Pi \subset C_+$ such that $\Gamma \cdot \Pi = C_+$, and for each $\gamma \in \Gamma$, either $\gamma \Pi\cap \Int(\Pi) = \emptyset$ or $\gamma \Pi = \Pi$. Moreover, 
\[
\{\gamma \in \Gamma \mid \gamma \Pi = \Pi\} = \{\gamma \in \Gamma \mid \gamma \text{~acts trivially on ~}V/W\}.
\]
\end{proposition}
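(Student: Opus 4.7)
The plan is to pass to the quotient $V/W$, invoke the existing fundamental domain result (Lemma \ref{le: existence of fun domain}) there, and pull the answer back via $p\colon V \to V/W$. Because $W$ is $\Gamma$-invariant (by maximality inside $\bar C$) and defined over $\Qq$ by hypothesis, both the group action and the rational structure descend and lift cleanly; the non-degeneracy needed for Lemma \ref{le: existence of fun domain} in the quotient is automatic from the maximality of $W$ (its image in $\overline{\tilde C}$ is zero).

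First I would record two preliminary facts. (a) Since $W \subset \bar C$ and $W$ is a $\Qq$-rational subspace, $W \subset C_+$; combined with the convex cone structure of $C_+$ this forces $C_+ + W = C_+$, equivalently $C_+ = p^{-1}(\tilde C_+)$. (b) Applying Lemma \ref{le: existence of fun domain} to the non-degenerate cone $\tilde C$ with $(\tilde C_+, \tilde \Gamma)$ of polyhedral type yields a rational polyhedral fundamental domain $\tilde \Pi \subset \tilde C_+$ for $\tilde \Gamma$; in particular $\tilde \gamma \tilde \Pi = \tilde \Pi$ only when $\tilde \gamma = \mathrm{Id}$.

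Next I would set $\Pi \coloneqq p^{-1}(\tilde \Pi)$; by (a) this lies in $C_+$. Choosing rational generators $\tilde v_1, \ldots, \tilde v_r$ of $\tilde \Pi$, rational lifts $v_1, \ldots, v_r \in C_+$, and a rational basis $w_1, \ldots, w_s$ of $W$ exhibits $\Pi$ as the rational polyhedral cone generated by $v_1, \ldots, v_r, \pm w_1, \ldots, \pm w_s$, with $\Int(\Pi) = p^{-1}(\Int(\tilde \Pi))$. The three conclusions then follow by transport through $p$: given $v \in C_+$, decomposing $\tilde v = \tilde \gamma \cdot \tilde u$ with $\tilde u \in \tilde \Pi$ and lifting $\tilde \gamma$ to $\gamma \in \Gamma$ gives $\gamma^{-1}\cdot v \in p^{-1}(\tilde \Pi) = \Pi$, hence $\Gamma \cdot \Pi = C_+$. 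Since every $\gamma \in \Gamma$ preserves $W$, we have $\gamma \Pi = p^{-1}(\tilde \gamma \tilde \Pi)$, so the dichotomy $\tilde \gamma \tilde \Pi = \tilde \Pi$ versus $\tilde \gamma \tilde \Pi \cap \Int(\tilde \Pi) = \emptyset$ transports to the corresponding dichotomy for $\gamma \Pi$ and $\Pi$. Finally $\gamma \Pi = \Pi$ iff $\tilde \gamma \tilde \Pi = \tilde \Pi$ iff $\tilde \gamma = \mathrm{Id}$ iff $\gamma$ acts trivially on $V/W$.

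The main technical point is the rationality and convex-cone structure of the lifted domain $\Pi$: without $W$ being defined over $\Qq$ the preimage $p^{-1}(\tilde \Pi)$ would not be rational polyhedral in $V$, and one could not complete rational lifts of the $\tilde v_i$ to generators of $\Pi$ by adjoining a rational basis of $W$. Once the rational structure is in place, the remaining verifications parallel those of \cite[Lemma 3.7]{LZ22}, transplanted to the present setting where the polyhedral-type hypothesis is imposed on the quotient $\tilde C_+$ rather than on $C_+$ itself.
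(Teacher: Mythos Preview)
Your proposal is correct and follows essentially the same approach as the paper: pass to the quotient $V/W$, apply Lemma~\ref{le: existence of fun domain} to obtain a rational polyhedral fundamental domain $\tilde\Pi$ for $\tilde\Gamma$, and lift to $\Pi = p^{-1}(\tilde\Pi)$ (the paper writes this as $\Pi' + W$ for a rational polyhedral lift $\Pi'$ of $\tilde\Pi$, which is the same set). Your verification of the dichotomy and of the stabilizer via $\gamma\Pi = p^{-1}(\tilde\gamma\tilde\Pi)$ and $\Int(\Pi) = p^{-1}(\Int(\tilde\Pi))$ is slightly more streamlined than the paper's elementwise check, but the substance is identical.
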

\begin{proof}
By Lemma \ref{le: existence of fun domain}, there is a rational polyhedral cone $\ti \Pi$ as a fundamental domain of $\ti C_+$ under the action of $\ti\Gamma$. By Lemma \ref{le: induced polyhedral type} (2), let $\Pi' \subset C_+$ be a  rational polyhedral cone such that $p(\Pi')=\ti\Pi$, where $p: V \to V/W$. Let $\Pi \coloneqq  \Pi'+W$ which is a rational polyhedral cone. As $W$ is defined over $\Qq$, we have $W \subset C_+$ and $\Pi \subset C_+$. As $\gamma (\Pi'+W) = (\gamma \Pi')+W$, we have $\Gamma \cdot \Pi = C_+$ by Lemma \ref{le: induced polyhedral type} (2). 

If $\gamma \ti\Pi \cap \Int(\ti\Pi) = \emptyset$, then $\gamma \Pi \cap \Int(\Pi) = \emptyset$ as $\Int(\Pi)$ maps to $\Int(\ti\Pi)$. If $\gamma \ti\Pi =\ti\Pi$, then we claim that $\gamma \Pi = \Pi$. In fact, for some $a\in \Pi'$, we have $\widetilde{(\gamma  \cdot  a)}=\gamma  \cdot \ti a \in \ti \Pi$ and thus $\gamma  \cdot  a = b+w$ for some $b\in \Pi', w\in W$. Thus $\gamma  \Pi \subset \Pi$. Similarly, $\gamma^{-1}  \Pi \subset \Pi$. This shows the claim. Moreover, $\gamma \Pi = \Pi$ iff $\gamma$ acts trivially on $\ti \Pi$ iff $\gamma$ acts trivially on $V/W$ because $\ti \Pi$ is a fundamental domain under the action of $\ti\Gamma$.
\end{proof}

There are natural maps between N\'eron-Severi spaces of fibrations and generic/geometric fibers. 

\begin{lemma}\label{le: natural maps}
Let $X \to S$ be a fiber space, and $U\subset S$ be an open set. Then there exist the following natural maps
\begin{enumerate}
\item $N^1(X/S) \to N^1(X_{\eta})$ with $[D] \mapsto [D_{\eta}]$,
\item $N^1(X/S) \to N^1(X_U/U)$ with $[D] \mapsto [D_U]$,
\item $N^1(X/S) \to N^1(X_{\bar\eta})$ with $[D] \mapsto [D_{\bar\eta}]$, and
\item $N^1(X_\eta) \to N^1(X_{\bar\eta})$ with $[D] \mapsto [D_{\bar\eta}]$.
\end{enumerate}
Moreover, for any sufficiently small $U$, we have $N^1(X_U/U) \simeq N^1(X_\eta)$, and they both map to $N^1(X_{\bar\eta})$ injectively.
\end{lemma}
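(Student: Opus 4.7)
The plan is to verify well-definedness of each of the four maps on numerical equivalence classes, and then establish the stabilization $N^1(X_U/U) \simeq N^1(X_\eta)$ and the injection into $N^1(X_{\bar\eta})$ for $U$ sufficiently small.

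\textbf{Well-definedness.} I would first show that if $D$ is an $\Rr$-Cartier divisor on $X$ with $D \equiv 0/S$, then $D_\eta \equiv 0$ on $X_\eta$. Given a curve $C' \subset X_\eta$, take its scheme-theoretic closure in $X$ and restrict to an open $U_0 \subset S$ over which it becomes a flat family $\mathcal{C} \to U_0$ with generic fiber $C'$. Deformation invariance of intersection numbers in flat families then gives $D_\eta \cdot C' = D|_{X_s} \cdot \mathcal{C}_s$ for closed $s \in U_0$, and the right side vanishes by hypothesis. The same argument handles (2), and (3) will follow by composing (1) with (4). For (4), I would invoke the standard fact that for a proper $K$-variety $Y$ and a field extension $K \subset L$, the map $N^1(Y) \to N^1(Y_L)$ is well-defined and injective: for $D$ Cartier on $Y$ and any curve $C \subset Y$, flat base change yields $D \cdot C = D_L \cdot C_L$, so $D_L \equiv 0$ forces $D \cdot C = 0$ for every $C \subset Y$, i.e.\ $D \equiv 0$.

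\textbf{Stabilization.} For surjectivity of $N^1(X_U/U) \to N^1(X_\eta)$, any Cartier divisor on $X_\eta$ is defined by finitely many local data over a finitely generated $K(S)$-subalgebra, hence extends to a Cartier divisor on $X_U$ for some open $U \subset S$; since $N^1(X_\eta)$ is finitely generated, finitely many generators already lift to some $N^1(X_{U_1}/U_1)$, and surjectivity persists upon shrinking $U \subset U_1$. For injectivity, after further shrinking $U$ I would use that the relative Picard scheme $\bPic(X_U/U)$ has a locally constant component-group sheaf (a consequence of generic flatness and finite-type results for the Picard scheme), so the assignment $s \mapsto N^1(X_s)$ is locally constant on $U$. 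Combined with deformation invariance of intersection numbers, a class $[D] \in N^1(X_U/U)$ mapping to $0$ in $N^1(X_\eta)$ then satisfies $[D|_{X_s}] = 0$ in $N^1(X_s)$ for every $s \in U$, hence $D \equiv 0/U$. Finite generation of $N^1(X_U/U)$ stabilizes the kernel under further shrinking, producing a single $U$ on which the map is an isomorphism. Composing this isomorphism with the injection from (4) embeds both $N^1(X_U/U)$ and $N^1(X_\eta)$ into $N^1(X_{\bar\eta})$.

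The main obstacle will be the injectivity step: rigorously establishing local constancy of the relative N\'eron-Severi of fibers requires careful use of the relative Picard scheme and its component-group sheaf, together with generic flatness to reduce to a setting where the Picard scheme is well-behaved. All remaining assertions are then either direct consequences of the above or standard flat-base-change arguments.
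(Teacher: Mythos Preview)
Your proposal is correct and covers all the claims, but the paper takes a more economical route for the ``moreover'' clause. Rather than arguing injectivity of $N^1(X_U/U) \to N^1(X_\eta)$ directly via local constancy of the component-group sheaf of the relative Picard scheme, the paper simply quotes \cite[Proposition 4.3]{LZ22} for the injectivity of $N^1(X_U/U) \to N^1(X_{\bar\eta})$ for $U$ small, and then observes that since $N^1(X_U/U) \to N^1(X_\eta)$ is surjective (your spreading-out argument) and factors the injection into $N^1(X_{\bar\eta})$, it must itself be an isomorphism. Your approach is self-contained but heavier; the paper's is shorter because the hard work was done in the companion paper. For item (4), the paper gives the well-definedness direction explicitly by descending a curve on $X_{\bar\eta}$ to a finite extension $F/K(S)$ and applying the projection formula for $X_F \to X_\eta$, whereas you treat well-definedness as standard and write out only the (easier) injectivity direction; both are fine, but note that your flat-base-change identity $D\cdot C = D_L\cdot C_L$ only tests $D_L$ against curves pulled back from $Y$, so it does not by itself give well-definedness---that really does need the descent/projection-formula step the paper sketches.
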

\begin{proof}
The (2) follows from the definition directly, the remaining items can be proved by a similar method as \cite[Proposition 4.3]{LZ22}. Hence we just sketch the proof of (4) below.

First, as $N^1(X_\eta)$ is defined over $\Qq$, it suffices to show that for a Cartier divisor $D$ on $X_\eta$ such that $D \equiv 0/\eta$, we have $D_{\bar\eta} \equiv 0/\bar\eta$. Let $\ti C$ be a curve on $X_{\bar\eta}$. Then $\ti C$ is defined over a finite extension of $K(S)$. In other words, there exist a field extension $F/K(S)$ with $[F: K(S)]=d<\infty$ and a curve $C$ on $X_F=X \times_\eta \spec F$ such that $C_{\bar\eta} = \ti C$. By the property of flat base extension, we have
\[
\chi(\ti C, mD_{\bar\eta}) = \chi(C, mD_F) \text{~for all~} m\in \Zz,
\] where $D_F = D \times_\eta \spec F$. By the definition of the intersection, we have $D_{\bar\eta} \cdot \ti C = D_F \cdot C$. 
Let $C'$ be the image of $C$ under the natural morphism $X_F \to X_{\eta}$. Then $C'$ is a curve on $X_{\eta}$. By the same proof of the projection formula (see \cite[Proposition 1.10]{Deb01}), we have $D_F \cdot C = d(D\cdot C')$. This shows the claim.

In \cite[Proposition 4.3]{LZ22}, we showed that $N^1(X_U/U) \to N^1(X_{\bar\eta})$ is injective for any sufficiently small $U$. As $N^1(X_U/U) \to N^1(X_{\eta})$ is surjective and $N^1(X_U/U) \to N^1(X_{\bar\eta})$ factors through $N^1(X_U/U) \to N^1(X_{\eta})$, we see that $N^1(X_U/U) \to N^1(X_{\eta})$ is injective. Hence $N^1(X_U/U) \simeq N^1(X_\eta)$, and they both map to $N^1(X_{\bar\eta})$ injectively.
\end{proof}

The following proposition is useful to study Calabi-Yau fibrations with non-trivial $R^1f_*\Oo_X$. It partially answers a question in \cite[Question 4.5]{LZ22}.

\begin{proposition}\label{prop: defined over Q}
Let $(X, \De) \to S$ be a klt Calabi-Yau fiber space. Let $E$ and $M$ be the maximal vector spaces in $\bEff(X/S)$ and $\bMov(X/S)$, respectively. Then $E$ and $M$ are defined over $\Qq$.
\end{proposition}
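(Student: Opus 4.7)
One natural approach is to reduce the rationality of $E$ and $M$ to a structural analysis inside the kernel of the restriction map to the generic fiber. Let $\pi \colon N^1(X/S) \to N^1(X_\eta)$ be this map, which is $\Qq$-linear by Lemma \ref{le: natural maps}, and set $K := \ker \pi$. Then $K$ is automatically $\Qq$-defined as the kernel of a $\Qq$-linear map between the $\Rr$-extensions of $\Qq$-vector spaces. My first step is to show $E \subset K$ and $M \subset K$. If $[D] \in E$, then both $\pm[D]$ are pseudo-effective over $S$, and by continuity of $\pi$ both $\pm[D_\eta]$ are pseudo-effective on the projective variety $X_\eta$. Pairing with $A_\eta^{\dim X_\eta - 1}$ for an ample $A_\eta$ on $X_\eta$ gives $[D_\eta] \cdot A_\eta^{\dim X_\eta - 1} = 0$; letting $A_\eta$ range over the open ample cone yields enough linear conditions to force $[D_\eta] = 0$. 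Hence $E \subset K$, and since $\bMov(X/S) \subset \bEff(X/S)$ forces $M \subset E$, also $M \subset K$.

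The second step is to pin down the rational structure of $E$ (and analogously $M$) inside $K$. The key observation is that an effective divisor $D$ on $X$ with $[D] \in K$ must be vertical, because $D|_{X_\eta}$ is an effective numerically trivial divisor on the projective variety $X_\eta$ and therefore vanishes as a Weil divisor. Consequently $\bEff(X/S) \cap K$ equals the closed convex cone $C$ generated by the classes of effective vertical divisors on $X/S$, and $E = C \cap (-C)$ is its lineality. I then plan to exhibit $E$ as the $\Rr$-span of finitely many rational classes: only finitely many prime divisors $\Sigma \subset S$ carry reducible or non-reduced fibers under $f$ (by generic smoothness and Noetherianity of $S$), and each such $\Sigma$ contributes the rational relation $[f^*\Sigma] = 0$ among the classes of the codimension-one vertical components. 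A negativity-lemma argument should then rule out contributions from vertical primes $W$ exceptional over loci of codimension $\geq 2$ in $S$: for such $W$, $-[W]$ fails to be pseudo-effective over $S$, so $[W] \notin E$. Hence $E$ is the $\Rr$-span of finitely many rational classes coming from the codimension-one fiber relations, and the same reasoning applied to $\bMov(X/S) \cap K$ yields the conclusion for $M$.

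The main obstacle I anticipate is the negativity step: confirming that exceptional vertical contributions over higher-codimension loci in $S$ do not enter the lineality, and that no unexpected cancellations occur between exceptional and codimension-one vertical classes. This likely requires a careful application of the negativity lemma, perhaps after passing to a $\Qq$-factorial small modification of $X/S$, to decouple the exceptional directions from the codimension-one fiber directions. Once that is settled, $E$ and $M$ are each the $\Rr$-span of a finite rational set attached to codimension-one reducible fibers, and are therefore defined over $\Qq$.
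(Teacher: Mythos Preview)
Your route for $E$ both misses the short argument and contains an actual error. You prove $E \subset K := \ker\pi$, but you do not notice that the reverse inclusion $K \subset E$ is immediate: if $[D]\in K$ then $(\pm D + tA)_{\bar\eta} \equiv tA_{\bar\eta}$ is big for every relatively ample $A$ and every $t>0$, hence $\pm D + tA$ is big over $S$, and letting $t\to 0$ gives $\pm[D]\in\bEff(X/S)$. Thus $E = K$ is the kernel of a $\Qq$-linear map and the rationality of $E$ follows at once. This is precisely the paper's argument.

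Because you miss $K\subset E$, you try to identify $\bEff(X/S)\cap K$ with the closed cone $C$ of vertical divisor classes and then analyze its lineality via fiber relations. This step is wrong, and the error is exactly in your anticipated ``negativity'' obstacle: for a vertical prime $W$ mapping to codimension $\geq 2$ in $S$ you assert that $-[W]$ is not pseudo-effective over $S$, but the perturbation argument above shows $-[W]\in K = E \subset \bEff(X/S)$. The negativity lemma gives non-\emph{nefness} of $-W$ on the exceptional fiber, not non-pseudo-effectivity over $S$. In particular $\bEff(X/S)\cap K = K$ is the whole subspace, generally strictly larger than $C$, so your plan to read off $E$ from codimension-one fiber relations cannot succeed as stated.

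For $M$, ``the same reasoning'' is not a proof and the paper's method is genuinely different. It characterizes
\[
M \;=\; E \;\cap\; \bigcap_{X\dashrightarrow X',\,C}\bigl\{[D]\,:\, D'\cdot C = 0\bigr\},
\]
the intersection taken over all $\Qq$-factorial varieties $X'/S$ isomorphic to $X$ in codimension $1$ and all curve classes $C$ on $X'$ whose members cover a divisor. The inclusion $\subset$ is clear; for $\supset$ one runs a $D$-MMP with scaling over $S$ (legitimate since $K_X+\De\equiv 0/S$) and observes that the hypothesis forbids any divisorial contraction or Mori fiber space, so the scaling thresholds tend to $0$ and $[D]\in\bMov(X/S)$. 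Each condition $D'\cdot C=0$ is a $\Qq$-hyperplane in $N^1(X/S)$, so together with the rationality of $E$ this gives that $M$ is defined over $\Qq$. Your proposal does not engage with this mechanism.
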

\begin{proof}
By Lemma \ref{le: natural maps} (3), there exists the natural map 
\[
\theta: N^1(X/S) \to N^1(X_{\bar\eta}), \quad [D] \mapsto [D_{\bar\eta}].
\] We claim that $\Ker (\theta) = E$. Hence, $E$ is defined over $\Qq$. For $[D] \in \Ker (\theta)$, we have $D_{\bar\eta} \equiv 0$ on $X_{\bar\eta}$, hence $D_{\bar\eta}+tA_{\bar\eta}$ is big for any ample$/S$ divisor $A$ on $X$ and $t \in \Rr_{>0}$. Thus $D+tA$ is also big over $S$. Take $t \to 0$, we have $[D] \in \bEff(X/S)$. For the same reason, $[-D] \in \bEff(X/S)$. Hence $[D]\in E$. Conversely, if $[D] \in E$, then $\pm [D_{\bar\eta}] \in \bEff(X_{\bar\eta})$. Elements in $\bEff(X_{\bar\eta})$ intersect with movable curves non-negatively (see \cite{BDPP13}). As movable curves form a full dimensional cone in $N_1(X_{\bar\eta})$, we have $D_{\bar\eta} \equiv 0$.

To show that $M$ is defined over $\Qq$. Suppose that $X' \to S$ is a fiber space such that $X'$ is $\Qq$-factorial and $X \dto X'/S$ is isomorphic in codimension $1$. If $D$ is a divisor on $X$, then let $D'$ be the push-forward of $D$ on $X'$. If $C$ is an irreducible curve on $X'$, then we say that its class covers a divisor if the Zariski closure 
\[
\overline{\bigcup_{[C']=[C] \in N_1(X'/S)} C'}
\] in $X'$  has codimension $\leq 1$. 

We claim that 
\begin{equation}\label{eq: 1}
\begin{split}
M = E \cap &\{[D] \mid D'\cdot C=0, \text{~where~} X \dto X'/S \text{~is isomorphic in codimension~}1, \\
&\qquad X' \text{~is~} \Qq\text{-factorial, and~} C \text{~is a curve whose class covers a divisor}\}.
\end{split}
\end{equation}

For ``$\subset$", if $[D] \in M$, then $\pm [D'] \in \bMov(X'/S)$, and thus $\pm D' \cdot C \geq 0$ for any curve $C$ whose class covers a divisor in $X'$. Hence, $D' \cdot C=0$.

For ``$\supset$", let $[D]$ belong to the right-hand side of \eqref{eq: 1}. As $[D] \in E$ and $K_X+\De \equiv 0/S$, we can run a $D$-MMP with scaling of an ample divisor $A$ over $S$ (e.g. \cite{BCHM10}). If this MMP consists of divisorial contractions or Mori fibrations, then let $X'$ be the first variety where this occurs. Hence $X \dto X'/S$ is isomorphic in codimension $1$ and there is a curve $C$ on $X'$ whose class covers a divisor such that $D'\cdot C<0$. This contradicts the choice of $D$. Hence the MMP only consists of flips. If this MMP terminates with $X'$, then $D'$ is nef$/S$, and thus $[D]\in \bMov(X/S)$. If this MMP does not terminate, then the nef threshold $\lambda_i$ in the scaling must approach $0$ by \cite{BCHM10}, thus $[D]=\lim_{i \to \infty}[D+\lambda_i A] \in \bMov(X/S)$. The same argument shows that $[-D]\in\bMov(X/S)$. Therefore, $[D]\in M$.  As $N^1(X/S) \to N^1(X'/S)$ is a linear map defined over $\Qq$, and the curve class $[C]$ in $N_1(X'/S)$ is an integral element, the vector space
\[
\begin{split}
&\{[D] \mid D'\cdot C=0, \text{~where~} X \dto X'/S \text{~is isomorphic in codimension~}1, \\
&\qquad X' \text{~is~} \Qq\text{-factorial, and~} C \text{~is a curve whose class covers a divisor}\}.
\end{split}
\] is defined over $\Qq$. As $E$ is defined over $\Qq$, $M$ is also defined over $\Qq$.
\end{proof}

\section{Groups schemes associated to log Calabi-Yau varieties}\label{sec: homogeneous fibrations}

The main result of this section is Theorem \ref{thm: num to linear} that rectifies the numerical equivalence to linear equivalence with the help of automorphism groups. 

Let $X$ be a scheme over a scheme $S$ and $\Aut(X/S)$ be the set of automorphism of $X$ over $S$. Let $\CAut_{X/S}$ be the automorphism functor defined by
\[
\CAut_{X/S}(T) \coloneqq \Aut(X_T/T),
\] where $T$ is a scheme over $S$ and $X_T = X \times_S T$. If $S$ is noetherian and $X \to S$ is a flat projective morphism, then $\CAut_{X/S}$ is representable by a group scheme $\bAut(X/S)$ which is locally of finite type over $S$ (e.g. see \cite[\S 5.6.2]{Nit05}). In particular, we have $\bAut(X_\eta/\eta)$ and $\bAut(X_{\bar\eta}/{\bar\eta})$ that represent the automorphism functors on the generic and geometric fibers, respectively. For simplicity, we write $\bAut(X_\eta)$ for $\bAut(X_\eta/\eta)$ and $\bAut(X_{\bar\eta})$ for $\bAut(X_{\bar\eta}/{\bar\eta})$. For a $k$-scheme $V$, and a field $K$ over $k$, define $V_K \coloneqq V \times_{\spec k} \spec K$, and
\[
V(\spec K) \coloneqq \Hom_{\spec K}(\spec K, V_K)
\] as the set of $(\spec K)$-point of $V$. Thus, $\bAut(X_\eta)(\eta)=\Aut(X_\eta)$. We also use $V(K)$ to denote $V(\spec K)$ when there is no ambiguity. We use $[g] \in V_K$ to denote the $(\spec K)$-point of $V_K$ that corresponds to $g\in V(K)$. Thus, $\bAut(X_\eta)(\eta)=\Aut(X_\eta)$. Let $\De$ be a divisor on $X/S$, and $\De_T \coloneqq \De \times_S T$ for a base extension $T\to S$. Define  $\CAut_{X/S,\De}$ as the sub-functor of $\CAut_{X/S}$ such that
\[
\CAut_{X/S,\De}(T) \coloneqq \{g\in \Aut(X_T/T)\} \mid g(\Supp \De_T) = \Supp \De_T\}. 
\] If $S$ is noetherian and $X \to S$ is a flat projective morphism, then $\CAut_{X/S,\De}$ is also representable by a subgroup scheme $\bAut(X/S, \De)$. 

When $S =\spec K$ with $K$ a field, we use $\bAut^0(X/S,\De)$ to denote the connected component of $\bAut(X/S,\De)$ that contains the identity element (see \cite[\S 9.5]{Kle05}), and set
\[
\Aut^0(X/S,\De) \coloneqq \bAut^0(X/S,\De)(K).
\] 

Let $X$ be a scheme over a scheme $S$. Following \cite[Definition 9.2.2]{Kle05}, we define the relative Picard functor $\CPic_{X/S}$ to be 
\[
\CPic_{X/S}(T) \coloneqq \Pic(X_T)/\Pic(T).
\]Denote the associated sheaf in the \'etale topology by
\[
\CPic_{X/S, \text{(\'et)}}.
\] See \cite[\S 2.3.7]{Vis05} for the sheafification of a functor. By \cite[Theorem 9.4.8]{Kle05}, if $X \to S$ is projective Zariski locally over $S$, and is flat with integral geometric fibers, then $\CPic_{X/S, \text{(\'et)}}$ is representable by a separated scheme which is locally of finite type over $S$, denoted by $\bPic(X/S)$. When $S =\spec K$ with $K$ a field, we use $\bPic^0(X/S)$ to denote the connected component of $\bPic(X/S)$ that contains the identity element. When there is no risk of ambiguity, $S$ will be omitted.

In particular, for a fibration $X \to S$, $\CPic_{X_\eta/\eta, \text{(\'et)}}$ and $\CPic_{X_{\bar\eta}/\bar\eta, \text{(\'et)}}$ are representable by $\bPic(X_\eta/\eta)$ and $\bPic(X_{\bar\eta}/\bar\eta)$, respectively. One should notice that although
\[
\Hom_{\bar\eta}(\bar\eta, \bPic(X_{\bar\eta}/\bar\eta))=\CPic_{X_{\bar\eta}/\bar\eta, \text{(\'et)}}(\bar\eta) = \Pic(X_{\bar\eta}),
\] the $\Hom_{\eta}(\eta, \bPic_{X_{\eta}/\eta})=\CPic_{X_{\eta}/\eta, \text{(\'et)}}(\eta)$ may contain more elements than $\Pic(X_\eta)$ due to the sheafification (see \cite[Exercise 9.2.4]{Kle05}).

The results presented in the rest of this section will be applied to the generic fiber of a Calabi-Yau fibration. Hence, from now on until the end of the section, we work with a projective log pair over a field of characteristic $0$ (but may not be algebraically closed).

The following theorem is base on \cite{Bri10, Bri13} (see \cite{Kaw85, Amb16} for similar results).

\begin{theorem}[{\cite{Xu20}}]\label{thm: Xu}
Let $(X, \De)$ be a projective log pair with klt singularities over an algebraically closed field of characteristic $0$. Assume that the log canonical divisor $K_X + \De \sim_\Rr 0$. Then
\begin{enumerate}
\item {\rm (\cite[heorem 4.5]{Xu20})}. The connected component of the group scheme $\bAut(X, \De)$ containing the identity morphism, $B\coloneqq \bAut^0(X, \De)$, is an abelian variety of dimension $h^1(X, \Oo_X)$.

\item {\rm (\cite[Theorem 4.5]{Xu20})}. The Albanese morphism $ X \to A$ is isomorphic to the homogeneous fibration induced by the
action of $B$ on $X$, thus $A = B/K$ for some finite subgroup $K \subset B$.

\item {\rm (\cite[Theorem 1.13]{Xu20})}. There exists an \'etale group homomorphism $\bAut^0(X,\De) \to A(X)$ (not canonical) such that
\[
X\simeq \bAut^0(X,\De) \times^K F,
\] where $K =\Ker(\bAut^0(X,\De) \to A(X))$ and $F$ is the fiber of the Albanese morphism $\alb_X$ over the identity element. Here $\bAut^0(X,\De) \times^K F$ denotes the quotient of $\bAut^0(X,\De) \times F$ by $K$ under the action $(xg,g^{-1}y)$, where $[g]\in K$ and $(x,y)\in \bAut^0(X,\De) \times F$ are closed points. 

\item {\rm (\cite[Theorem 3.1 (2)]{Xu20})}. The Albanese morphism $\alb_X: X \to A(X)$ is identified with the natural morphism
\[
\bAut^0(X,\De) \times^K F \to \bAut^0(X,\De)/K.
\]
\end{enumerate}
\end{theorem}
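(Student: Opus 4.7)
The plan is to establish item~(1) first---that $B \coloneqq \bAut^0(X,\De)$ is an abelian variety of dimension $h^1(X,\Oo_X)$---and then to read off items~(2)--(4) from the resulting $B$-action on $X$ together with the universal property of the Albanese.

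For~(1), I would invoke Chevalley's structure theorem to write the connected group $B$ as an extension
\[
1 \to L \to B \to A' \to 1,
\]
with $L$ a connected linear algebraic group and $A'$ an abelian variety. The key is to show $L$ is trivial. A nontrivial $\Gg_a$ or $\Gg_m$ in $L$ acting on $(X,\De)$ would produce a family of rational orbit closures and, after passing to an $L$-equivariant log resolution, allow us to construct a curve $C$ with $(K_X+\De)\cdot C < 0$ (using that $\Supp \De$ is preserved so that none of the positive contribution to $K_X+\De$ concentrates on $C$), contradicting $K_X+\De \equiv 0$. Once $B$ is abelian, its dimension is the dimension of its Lie algebra, which injects into the space of global vector fields on $X$ tangent to $\Supp \De$; for klt Calabi-Yau pairs this can be matched with $H^1(X,\Oo_X)$ via Serre duality applied to $K_X+\De \sim_\Rr 0$ together with Kodaira-type vanishing coming from klt singularities.

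For~(2)--(4), the natural action map $B \times F \to X$, where $F = \alb_X^{-1}(e)$, is equivariant for the $K$-action $(g,y)\mapsto(gk^{-1},ky)$ with $K$ the stabilizer of $F$, and therefore descends to
\[
\mu: B \times^K F \longrightarrow X.
\]
Since $B$ acts transitively on the Albanese image (the orbit closure of $e$ is a subabelian variety universal among morphisms to abelian varieties, hence equals $A(X)$), the map $\mu$ is surjective and finite. A tangent space computation at a general point, together with normality of both sides, yields that $\mu$ is an isomorphism. The Albanese morphism is then identified with the obvious quotient projection $B \times^K F \to B/K = A(X)$, proving (4), while (2) follows because fibers of $\alb_X$ coincide with $B$-orbits up to $K$, and the étale homomorphism $B \to A(X)$ in (3) is the quotient by $K$.

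The principal obstacle is excluding the linear part $L$ under klt (rather than smooth) hypotheses. In the smooth case one has direct holomorphic vector field arguments à la Beauville--Bogomolov, but for klt pairs one must argue on a suitable equivariant log resolution and carefully track how $L$ permutes the discrepancy divisors while preserving $\Supp \De$. A secondary technical point is passing from the $\Rr$-boundary $\De$ to a $\Qq$-boundary $\De'$ with $\Supp \De = \Supp \De'$ (as recorded in Section~\ref{sec: preliminaries}), so that finiteness and semi-ampleness tools of the MMP are directly applicable when constructing the contradicting curve $C$.
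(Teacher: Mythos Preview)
The paper does not prove this theorem: it is quoted verbatim from \cite{Xu20}, and the only addition is the remark immediately following it, reducing the $\Rr$-divisor case to the $\Qq$-divisor case by perturbing $\De$ to a $\Qq$-divisor $\De'$ with the same support. So there is nothing to compare your argument against in this paper.

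That said, your sketch for item~(1) contains a genuine error. You propose to exclude the linear part $L$ of $B$ by producing a curve $C$ with $(K_X+\De)\cdot C<0$, thereby ``contradicting $K_X+\De\equiv 0$.'' But $K_X+\De\equiv 0$ already forces $(K_X+\De)\cdot C=0$ for \emph{every} curve $C$, so no such curve can possibly be constructed; the proposed contradiction is vacuous. The actual mechanism is different: a nontrivial $\Gg_a$ or $\Gg_m$ action produces a covering family of rational curves, and the contradiction comes not from intersection numbers with $K_X+\De$ but from comparing this uniruledness (or the induced structure on the MRC quotient/Albanese image) with the constraints imposed by the klt Calabi-Yau condition---for instance, in the boundary-free case, $K_X\sim_\Qq 0$ gives $\kappa(X)=0$, incompatible with $X$ being uniruled. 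In the log case one must genuinely use that $\Supp\De$ is preserved, and the argument is more delicate than a single negativity computation. Your Serre duality heuristic for the dimension count is also too loose: $\mathrm{Lie}(\bAut^0(X))=H^0(X,T_X)$ is not in general dual to $H^1(X,\Oo_X)$; the equality $\dim B=h^1(X,\Oo_X)$ is established by comparing $B$ with $A(X)$ via Blanchard's lemma rather than by a direct duality. Your outline for (2)--(4) is essentially correct in spirit.
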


\begin{remark}
The original statement of \cite[Theorem 4.5]{Xu20} is for $\Qq$-divisors, but it still holds for $\Rr$-divisors. In fact, a klt pair $(X,\De)$ satisfying $K_X+\De \sim_\Rr 0$ implies the existence of a $\Qq$-divisor $\De'$ such that $(X,\De')$ is klt with $K_X+\De'\sim_\Qq 0$ and $\Supp \De= \Supp \De'$.
\end{remark}

The following is a version of a result of Blanchard about holomorphic transformation groups (see \cite{Bla56}). Blanchard's lemma will be used repetitively in the sequel and will be generalized in Lemma \ref{le: Blanchard}.

\begin{lemma}[{Blanchard's Lemma (\cite[Proposition 4.2.1]{BSU13})}]\label{le: Blanchard 0}
Let $f: X \to Y$ be a proper morphism of varieties over $k$ which is an algebraically closed field of characteristic $0$. Suppose that $f_*\Oo_X = \Oo_Y$. Let $G$ be a connected group scheme acting on $X$. Then there exists a unique $G$-action on $Y$ such that $f$ is $G$-equivariant. 

In particular, there exists a natural group homomorphism
\begin{equation}\label{eq: Blanchard}
\nu: \bAut^0(X) \to \bAut^0(Y), \quad [g] \mapsto [g_Y]
\end{equation} such that $g_Y\circ f = f\circ g$.
\end{lemma}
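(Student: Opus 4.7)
My plan is to split the lemma into existence, uniqueness, and the group-action verification, following the classical rigidity argument of Blanchard. Uniqueness is immediate: if $\sigma_Y$ makes $f$ equivariant, then it must satisfy $\sigma_Y \circ (\mathrm{id}_G \times f) = f \circ \sigma_X$, and since $\mathrm{id}_G \times f$ is surjective and $Y$ is separated, this determines $\sigma_Y$. So the heart is existence.

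Let $\sigma_X: G \times X \to X$ be the given action. Put $\Phi := f \circ \sigma_X: G \times X \to Y$ and $\tilde f := \mathrm{id}_G \times f: G \times X \to G \times Y$. I want to factor $\Phi = \sigma_Y \circ \tilde f$. The map $\tilde f$ is proper, and since $G$ is flat over $k$, flat base change applied to $f_*\mathcal{O}_X = \mathcal{O}_Y$ gives $\tilde f_* \mathcal{O}_{G \times X} = \mathcal{O}_{G \times Y}$. By the standard universal property of such ``Stein-type'' morphisms, it suffices to prove that $\Phi$ is set-theoretically constant on the fibers of $\tilde f$; the scheme-theoretic factorization then follows from $\tilde f_* \mathcal{O} = \mathcal{O}$.

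To prove constancy on fibers, fix a closed point $y \in Y$ and set $F_y := f^{-1}(y)$, which is proper and geometrically connected by $f_*\mathcal{O}_X = \mathcal{O}_Y$. Restrict $\Phi$ to obtain $\psi_y: G \times F_y \to Y$. On $\{e\} \times F_y$ this is $f|_{F_y}$, the constant morphism with value $y$. Apply the rigidity lemma (in the form valid for a proper connected family, as in Mumford's \emph{Abelian Varieties}, suitably extended to connected but possibly reducible fibers via the version that only requires $\tilde f_*\mathcal{O} = \mathcal{O}$): the restriction to $\{e\}\times F_y$ being constant, together with connectedness of $G$ and properness and connectedness of $F_y$, forces $\psi_y$ to factor through the projection to $G$. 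Hence $\Phi(\{g\} \times F_y)$ is a single point for each $(g, y)$, producing the desired morphism $\sigma_Y: G \times Y \to Y$.

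For the group-action axioms, I would check $\sigma_Y \circ (e_G \times \mathrm{id}_Y) = \mathrm{id}_Y$ and $\sigma_Y \circ (m_G \times \mathrm{id}_Y) = \sigma_Y \circ (\mathrm{id}_G \times \sigma_Y)$ by pulling back along $\tilde f$ (or $\mathrm{id}_G \times \tilde f$); both then reduce to the corresponding identities for $\sigma_X$, and uniqueness lets us descend. The ``in particular'' statement then applies the main claim to $G = \bAut^0(X)$ acting tautologically on $X$: one obtains a group homomorphism $\bAut^0(X) \to \bAut(Y)$, and since $\bAut^0(X)$ is connected and sends the identity to the identity, its image lies in $\bAut^0(Y)$. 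The step I expect to be most delicate is the rigidity argument applied to the possibly non-reduced, non-irreducible connected fiber $F_y$; the safest route is to invoke the sheaf-theoretic form that only uses $\tilde f_*\mathcal{O}_{G\times X} = \mathcal{O}_{G\times Y}$ and the connectedness of $G$, thereby bypassing any irreducibility hypothesis entirely.
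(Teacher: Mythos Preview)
The paper does not supply its own proof of this lemma; it simply records the statement and cites \cite[Proposition~4.2.1]{BSU13}. So there is no ``paper's proof'' to compare against beyond the reference itself.

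Your sketch is the standard Blanchard/rigidity argument and is essentially correct, and it is in fact the line of reasoning in the cited source. A couple of small comments. First, the rigidity step is cleanest if you apply it in the form: $F_y$ is proper and geometrically connected (from $f_*\Oo_X=\Oo_Y$ via Stein factorization), $G$ is connected, and $\psi_y|_{\{e\}\times F_y}$ is constant; hence $\psi_y$ factors through the projection to $G$. Your worry about non-reduced or reducible fibers is handled already by this version, since the relevant input is only that any morphism from a proper connected $k$-scheme to an affine $k$-scheme is constant, which needs neither reducedness nor irreducibility. Second, once set-theoretic constancy on fibers of $\tilde f$ is known, the scheme-theoretic factorization through $\tilde f$ follows from $\tilde f_*\Oo_{G\times X}=\Oo_{G\times Y}$ together with separatedness of $Y$, exactly as you indicate. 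The deduction of the homomorphism $\nu:\bAut^0(X)\to\bAut^0(Y)$ and the landing in the identity component is fine.
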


By our convention, if $f: X \to Y$ is a fibration, then $f_*\Oo_X=\Oo_Y$. To simplify the expression, for a map $\mu: A \to B$, if $A_0 \subset A$ or $\mu(A) \subset B_0 \subset B$, we still use $\mu$ to denote the natural restriction maps $A_0 \to B$, $A \to B_0$, etc.

\begin{lemma}\label{le: big divisor}
Let $(X, \De)$ be a projective log Calabi-Yau pair with klt singularities over an algebraically closed field of characteristic $0$, and $D$ be a nef and big Cartier divisor on $X$. Let $G \subset \bAut^0(X)$ be a connected sub-abelian variety, then
\begin{equation}\label{eq: theta}
\vartheta_D: G \to \bPic^0(X), \quad [g] \mapsto [g^*D -D]
\end{equation} is a homomorphism of algebraic groups whose kernel is finite.
\end{lemma}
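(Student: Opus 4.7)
The plan is first to verify that $\vartheta_D$ is a morphism of algebraic groups, and then to deduce the finiteness of its kernel from the nef and big hypothesis on $D$. For the first part, consider the action morphism $\mu\colon G\times X\to X$ and the line bundle $\mathcal{L}\coloneqq \mu^*\Oo_X(D)\otimes p_X^*\Oo_X(-D)$ on $G\times X$. Since $\mathcal{L}$ restricts to $\Oo_X$ on $\{e\}\times X$ and $G$ is connected, the universal property of $\bPic(X)$ yields a morphism of schemes $G\to \bPic^0(X)$ which is precisely $\vartheta_D$. The identity $(gh)^*D-D=h^*(g^*D-D)+(h^*D-D)$ shows that $\vartheta_D$ is a homomorphism provided $G$ acts trivially on $\bPic^0(X)$ by pullback, and this follows because pullback produces a morphism from the connected group $G$ to $\Aut_{\mathrm{grp}}(\bPic^0(X))$, a discrete arithmetic group (since $\bPic^0(X)$ is an abelian variety), whose image must therefore be trivial.

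For the finiteness of $\ker\vartheta_D$, let $K^0$ denote its identity component, a closed sub-abelian variety of $G$ on which $g^*D\sim D$ for every $g\in K^0$. Since $(X,\De)$ is klt with $K_X+\De\sim_\Rr 0$ and $D$ is nef and big, the base point free theorem yields an $m\gg 0$ such that $|mD|$ is base point free and its associated birational morphism factors as $\phi\colon X\to Y$ onto a normal projective variety $Y$ with $\phi_*\Oo_X=\Oo_Y$, embedded in $\mathbb{P}^N=\mathbb{P}(H^0(X,mD)^\vee)$ through an ample line bundle. For each $g\in K^0$, the relation $g^*(mD)\sim mD$ means that $\phi\circ g$ corresponds to the same complete linear system $|mD|$ as $\phi$; more conceptually, Blanchard's Lemma (Lemma \ref{le: Blanchard 0}) applied to $\phi$ descends the $K^0$-action on $X$ to an algebraic $K^0$-action on $Y$, yielding a morphism of algebraic groups $K^0\to \Aut(Y)\hookrightarrow \mathrm{PGL}_{N+1}$.

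Since $K^0$ is complete and $\mathrm{PGL}_{N+1}$ is affine, this morphism is constant and (because it sends $e$ to $\mathrm{id}$) trivial, so every $g\in K^0$ acts trivially on $Y$. As $\phi$ is birational, $g$ then fixes a dense open subset of $X$, and separatedness forces $g=\mathrm{id}_X$; combined with the inclusion $G\hookrightarrow \bAut^0(X)$, this gives $K^0=\{e\}$, so $\ker\vartheta_D$ is finite. I expect the main technical point to be the upgrade of the set-theoretic map $g\mapsto \sigma_g$ to an honest morphism of algebraic groups; this is cleanly handled by arranging $\phi_*\Oo_X=\Oo_Y$ (for instance by passing to the Stein factorization or to $\mathrm{Proj}\,\bigoplus_{k\ge 0}H^0(X,kmD)$) and then invoking Blanchard's Lemma, after which the rigidity principle ``complete to affine implies constant'' finishes the argument.
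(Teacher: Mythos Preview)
Your argument is correct and tracks the paper's closely: both use the base-point-free theorem to pass to a birational contraction $\phi\colon X\to Y$ with $D=\phi^*H$ for $H$ ample, and both finish with the rigidity principle that an abelian variety mapping to a linear (affine) group has finite image. One imprecision: you write $\Aut(Y)\hookrightarrow\mathrm{PGL}_{N+1}$, which is false in general (think of translations when $Y$ is an abelian variety); what embeds is the subgroup preserving the class of $H$, and your $K^0$ lands there precisely because $K^0\subset\ker\vartheta_D$ and $\phi^*$ is injective on Picard groups. The paper organizes the reduction slightly differently: it first sets up a commutative square identifying $\vartheta_D$ with $\vartheta_H$ via the isomorphism $\bPic^0(Y)\xrightarrow{\sim}\bPic^0(X)$ (which uses that $Y$ has rational singularities, hence $h^1(\mathcal{O}_Y)=h^1(\mathcal{O}_X)$), and then argues on $Y$ with the full kernel rather than its identity component. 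Your route via $K^0$ and Blanchard descent is a mild simplification in that it sidesteps the $\bPic^0$ comparison. For the homomorphism claim, the paper simply invokes Mumford's rigidity (a morphism of abelian varieties sending identity to identity is a homomorphism), which is quicker than your argument through the triviality of the $G$-action on $\bPic^0(X)$, though both are valid.
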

\begin{proof}
By the universal property of the Picard functor, there is a morphism 
\[
\vartheta_D: G \to \bPic(X), \quad [g] \mapsto [g^*D -D].
\] As $G$ is connected and contains the identity element, $\vartheta_D (G) \subset \bPic^0(X)$. As $\vartheta_D([{\rm Id}]) = [0]$, $\vartheta_D$ is a group homomorphism (see \cite[Page 41, Corollary 1]{Mum70}).

By the base-point free theorem, $D$ is semi-ample. Let $h: X \to Y$ be the birational morphism induced by $D$. Let $D=h^*H$ with $H$ an ample Cartier divisor on $Y$. Moreover, $K_X+\De= h^*(K_Y+\De_Y)$ where $\De_Y= h_*\De$. Thus $(Y, \De_Y)$ is still a projective Calabi-Yau pair with klt singularities. In particular, $Y$ has rational singularities and thus $h^1(Y, \Oo_Y)=h^1(X, \Oo_X)$. By Blanchard's lemma, there is a morphism $\bAut^0(X,\De) \to \bAut^0(Y)$. It is straightforward to see that this is an injective map. Let $G_Y$ be the image of $G$ under this homomorphism. As the base field is of characteristic $0$, both $\bPic^0(X)$ and $\bPic^0(Y)$ are smooth varieties (\cite[Page 95, Theorem]{Mum70}). Hence $\dim \bPic^0(X) = h^1(X, \Oo_X)= h^1(Y, \Oo_Y) =\dim \bPic^0(Y)$ by \cite[Corollary 9.5]{Kle05}. Thus the natural injective morphism of algebraic groups
\[
\bPic^0(Y) \to \bPic^0(X), \quad [L] \mapsto [h^*L]
\] is an isomorphism. We have the following commutative diagram, where $\vartheta_H([g]) = [g^*H - H]$ with $H$ an ample divisor on $Y$:
\[
\xymatrix{
G \ar[d]_\simeq \ar[r]^{\vartheta_D} &\bPic^0(X) \\
G_Y \ar[r]^{\vartheta_H} & \bPic^0(Y).\ar[u]_\simeq}
\] 

To show that $\vartheta_D$ has a finite kernel, it suffices to show that $\vartheta_H$ has a finite kernel. This can be done similarly to \cite[Theorem 4.6]{Han87}. As $\vartheta_{kH} = (\vartheta_H)^{\otimes k}$, it suffices to show that $\vartheta_{kH}$ has a finite kernel. Assume that $kH$ is very ample, and $Y \to \Pp^n$ is the closed embedding induced by $|kH|$. If $\vartheta_{kH}([g])=[0]$, then $g^*(kH) \sim kH$. Hence $g$ induces an isomorphism $H^0(Y, \Oo_Y(g^*(kH))) \simeq H^0(Y, \Oo_Y(kH))$. Therefore, $g$ corresponds to an automorphism of $\Pp^n$ which leaves $Y$ invariant. In other words, we can identify $g$ with a closed point in $\bAut(\Pp^n, Y)$, where $\bAut(\Pp^n, Y)$ is the group scheme that represents the automorphisms of $\Pp^n$ leaving $Y$ invariant. Let $j$ be this natural group homomorphism 
\[
j: \vartheta_{kH}^{-1}([0]) \to \bAut(\Pp^n, Y).
\] If $j([g])=[h]$, then $h|_Y=g$. Thus $j$ is injective. Since $\bAut(\Pp^n, Y)$ is a subgroup of $\bAut(\Pp^n)$, $\vartheta_{kH}^{-1}([0])$ is a linear group scheme. On the other hand, $G_Y$ is an abelian variety as $G$ is an abelian variety. Hence $\vartheta_{kH}^{-1}([0])$ is a subgroup scheme of an abelian variety. Therefore, $\vartheta_{kH}^{-1}([0])$ must be a finite group. This completes the argument.
\end{proof}

Lemma \ref{le: big divisor} is well-known when $D$ is ample, and in this case, the Calabi-Yau condition is not necessary. We partially generalize Lemma \ref{le: big divisor} to a semi-ample divisor $D$.

\begin{theorem}\label{thm: general divisor}
Let $(X, \De)$ be a projective klt Calabi-Yau pair over an algebraically closed field of characteristic $0$. Let $f: X \to Y$ be a fibration to a normal projective variety. Let $H$ be a nef and big Cartier divisor on $Y$.

\begin{enumerate}
\item The natural morphism
\[
{\vartheta_H} \circ \nu: \bAut^0(X, \De) \to \bAut^0(Y) \to \bPic^0(Y), \quad [g] \mapsto [g_Y] \mapsto [g_Y^*H -H]
\] is surjective.
\item The image of 
\[
\vartheta_{f^*H}: \bAut^0(X, \De) \to \bPic^0(X), \quad [g] \mapsto [g^*(f^*H) - (f^*H)]
\] is $f^*\bPic^0(X) \coloneqq \{[f^*L] \in \bPic^0(X) \mid [L] \in \bPic^0(Y)\}$.
\end{enumerate}
\end{theorem}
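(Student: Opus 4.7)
The plan is to prove (1) by a dimension count and then derive (2) from $\nu$-equivariance. Set $B := \bAut^0(X,\De)$, an abelian variety of dimension $h^1(X,\Oo_X)$ by Theorem \ref{thm: Xu}(1), and $G := \nu(B) \subset \bAut^0(Y)$; as the image of an abelian variety, $G$ is itself an abelian variety. Because $G$ is connected and the group scheme of endomorphisms of the abelian variety $\bPic^0(Y)$ is discrete, the pullback action of $G$ on $\bPic^0(Y)$ is trivial, so $\vartheta_H|_G : G \to \bPic^0(Y)$ is a bona fide group homomorphism. It then suffices to establish (a) this homomorphism has finite kernel, and (b) $\dim G \geq \dim \bPic^0(Y)$; the image will then be a closed connected subgroup of the connected $\bPic^0(Y)$ of the correct dimension, hence equal to $\bPic^0(Y)$.

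For (a), if $[g] \in G$ satisfies $g^*H \sim H$, then since $H$ is big we may choose $m \gg 0$ so that $|mH|$ defines a birational map $Y \dto \Pp^N$, and any isomorphism $g^*(mH) \simeq mH$ gives an action of $g$ on $V := H^0(Y, \Oo_Y(mH))$ up to scalar, yielding a group homomorphism $\ker(\vartheta_H|_G) \to {\rm PGL}(V)$. A $g$ in this kernel acts trivially on the birational image of $|mH|$, hence is the identity on a dense open subset of $Y$, hence identically; so the homomorphism is injective. Since an abelian subvariety of a linear algebraic group is trivial, the identity component of $\ker(\vartheta_H|_G)$ is trivial, and the kernel is finite.

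For (b), for each $g \in \bAut^0(Y)$ the Albanese functoriality produces an induced morphism $A(Y) \to A(Y)$; being part of a connected family of abelian-variety morphisms through the identity, it must be translation by some $a(g) \in A(Y)$, and this defines a group homomorphism $a : \bAut^0(Y) \to A(Y)$. By Blanchard's lemma $f$ is $B$-equivariant under $\nu$, and the functoriality of Albanese (with compatible base-point choices) then yields that $a \circ \nu : B \to A(Y)$ equals the composition $B \to A(X) \to A(Y)$, in which the first arrow is surjective by Theorem \ref{thm: Xu}(2) and the second is surjective since $f$ is. Hence $a|_G$ is surjective, so $\dim G \geq \dim A(Y) = \dim \bPic^0(Y)$, completing (1).

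For (2), the $\nu$-equivariance of $f$ gives $g^*(f^*H) = f^*(\nu(g)^*H)$, so $\vartheta_{f^*H} = f^* \circ \vartheta_H \circ \nu$. Since $f_*\Oo_X = \Oo_Y$ the pullback $f^* : \bPic^0(Y) \hookrightarrow \bPic^0(X)$ is injective, and combined with the surjectivity in (1) the image of $\vartheta_{f^*H}$ equals $f^*\bPic^0(Y) = f^*\bPic^0(X)$ of the statement. The principal difficulty is that $Y$ is not assumed to admit a klt log Calabi-Yau structure, so Lemma \ref{le: big divisor} cannot be applied to $(Y, H)$ directly; our workaround uses only the bigness of $H$ and the birationality of $|mH|$ in (a), and transfers the dimension bound (b) from $X$ to $Y$ via Albanese functoriality, so that no base-point-free input on $Y$ is needed.
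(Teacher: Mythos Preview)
Your argument follows the same skeleton as the paper's: show $\vartheta_H|_{G}$ has finite kernel, bound $\dim G$ from below by $\dim A(Y)$ via Albanese functoriality, and conclude by a dimension count. The paper, however, first invokes Ambro's canonical bundle formula to equip $Y$ with a klt Calabi--Yau boundary $\De_Y$; this buys two things at once. First, Lemma~\ref{le: big divisor} can then be quoted verbatim for $(Y,\De_Y,H)$, so the finite-kernel step is a one-line citation. Second, Theorem~\ref{thm: Xu} applied to $Y$ gives that $\alb_Y$ is a fibration and that $Y$ has rational singularities, which the paper uses to justify $\dim A(Y)=\dim\bPic^0(Y)$. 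Your route trades the canonical bundle formula for a direct bigness argument in (a), which is a genuine simplification: you only need $|mH|$ to be birational, not a morphism, so no base-point-freeness on $Y$ is required.

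Two points deserve tightening. In (b) you use $\dim A(Y)=\dim\bPic^0(Y)$ without comment; for a normal projective variety over an algebraically closed field of characteristic $0$ this is true (since $\bPic^0(Y)$ is then a bona fide abelian variety and the Albanese is its dual), but it is not as immediate as for smooth varieties and merits a reference, especially since you are explicitly avoiding the rational-singularities input that the paper uses here. Also, your justification that $\vartheta_H|_G$ is a genuine group homomorphism via discreteness of $\End(\bPic^0(Y))$ is correct but heavier than needed: since $G$ is an abelian variety and $\vartheta_H$ sends the identity to $0$, Mumford's rigidity lemma already forces it to be a homomorphism, as in the paper's Lemma~\ref{le: big divisor}. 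The commutativity $a\circ\nu = (B\to A(X)\to A(Y))$ in (b) is the content of the paper's diagram~\eqref{eq: commutative 1} and deserves the same care; ``compatible base-point choices'' hides the verification that the two induced translation actions on $A(Y)$ agree.
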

\begin{proof}
We proceed with the argument in several steps.

Step 1. 

Let $\alb_X: X \to A(X)$ and $\alb_Y: Y \to A(Y)$ be the Albanese morphisms. By the universal property of Albanese morphisms, there exists $g: A(X) \to A(Y)$ such that the following diagram commutes
\begin{equation}\label{eq: commutativity of alb}
\xymatrix{
X \ar[d]_f \ar[r]^{\alb_X} &A(X)\ar[d]^g \\
Y \ar[r]^{\alb_Y} & A(Y).}
\end{equation} By the canonical bundle formula \cite{Amb05}, there exists a $\De_Y$ such that $(Y, \De_Y)$ is still a klt Calabi-Yau pair. By Theorem \ref{thm: Xu} (2), $\alb_X$ and $\alb_Y$ are both fibrations, and thus $g$ is also a fibration. By Blanchard's lemma again, there exists a group homomorphism
\[
\tau: \bAut^0(A(X)) \to \bAut^0(A(Y)).
\] 

We claim that $\tau$ is surjective. Let $\beta\in A(Y)$ be a closed point, and $[\beta] \in \bAut^0(A(Y))$ be the corresponding morphism $[\beta]: y \mapsto y+\beta$ for each $y\in A(Y)$. Take a closed point $y_0\in A(Y)$, and let $x_0, x_0'\in A(X)$ be closed points such that $g(x_0) = y_0, g(x_0')=y_0+\beta$, and $\alpha = x_0'-x_0$. Then we have $\tau([\alpha]) =[\beta]$. In fact, as $g$ is a homomorphism of abelian varieties, 
\[
g([\alpha](x)) = g(x+\alpha)=g(x)+g(x'_0)-g(x_0) = g(x)+\beta=[\beta](g(x)).
\] Hence $\tau([\alpha]) = [\beta]$ by the construction of $\tau$.

Step 2. 

By the universal property of the Albanese morphisms, an isomorphism of $X$ induces an isomorphism of $A(X)$. Hence there exist group homomorphisms
\[
\iota_X: \bAut^0(X) \to \bAut^0(A(X)), \quad \iota_Y: \bAut^0(Y) \to \bAut^0(A(Y)).
\] Moreover, these morphisms sit in the following diagram
\begin{equation}\label{eq: commutative 1}
\xymatrix{
\bAut^0(X) \ar[d]_{\nu} \ar[r]^{\iota_X} & \bAut^0(A(X))\ar[d]^\tau \\
\bAut^0(Y) \ar[r]^{\iota_Y} & \bAut^0(A(Y)).}
\end{equation} To check the commutativity of this diagram, it suffices to check the commutativity of the diagram below with $\phi \in \bAut^0(X)$,
\[
\xymatrix{A(X) \ar[d]_{\iota_X(\phi)} \ar[r]^{g} & A(Y)\ar[d]^{\iota_Y(\nu(\phi))} \\
A(X) \ar[r]^{g} & A(Y).}
\] This follows from \eqref{eq: commutativity of alb} and the surjectivity of $\alb_X, \alb_Y$.

By Theorem \ref{thm: Xu} (3), there is an \'etale group homomorphism $\bAut^0(X,\De) \to A(X)$ (not canonical) such that
\[
X\simeq \bAut^0(X,\De) \times^K F,
\] where $K =\Ker(\bAut^0(X,\De) \to A(X))$ and $F$ is the fiber of $\alb_X$ over the identity element. By Theorem \ref{thm: Xu} (4), the Albanese morphism $\alb_X: X \to A(X)$ is identified with the natural morphism
\[
\bAut^0(X,\De) \times^K F \to \bAut^0(X,\De)/K.
\] Therefore, the natural homomorphism
\[
\bAut^0(X,\De)\xhookrightarrow{\iota_X} \bAut^0(A(X)) \simeq \bAut^0( \bAut^0(X,\De)/K) \simeq  \bAut^0(X,\De)/K
\] is just the quotient map. In particular,
\begin{equation}\label{eq: iota_X surj}
\iota_X(\bAut^0(X,\De)) = \bAut^0(A(X)).
\end{equation}

By Step 1, $\tau: \bAut^0(A(X)) \to \bAut^0(A(Y))$ is surjective, hence, by \eqref{eq: commutative 1}, we have
\[
\iota_Y \circ \nu(\bAut^0(X,\De))= \tau\circ\iota_X(\bAut^0(X,\De)) = \tau(\bAut^0(A(X))) = \bAut^0(A(Y)). 
\] Let 
\[
G_Y\coloneqq \nu(\bAut^0(X,\De)) \subset \bAut^0(Y).
\] It is an abelian variety as $\bAut^0(X,\De)$ is an abelian variety by Theorem \ref{thm: Xu} (1). Then
\[
\dim G_Y \geq \dim \bAut^0(A(Y)) = \dim A(Y).
\] 

Step 3. 

By Lemma \ref{le: big divisor}, the group homomorphism
\[
G_Y \xrightarrow{\vartheta_H} \bPic^0(Y)
\] has a finite kernel. Thus
\[
\dim \vartheta_H(G_Y)=\dim G_Y \geq \dim A(Y) =\dim \bPic^0(Y).
\] Hence $\vartheta_H$ is surjective. This shows the first claim.

It is straightforward to check that the following diagram commutes, 
\[
\xymatrix{
\bAut^0(X, \De) \ar[d]_{\nu} \ar[r]^{\vartheta_{f^*H}} & \bPic^0(X) \\
\bAut^0(Y) \ar[r]^{\vartheta_H} & \bPic^0(Y)\ar[u]_{f^*}.}
\] Combining it with the first claim, we have the second claim.
\end{proof}

Recall that for a fibration $X \to S$, we set $\eta\in S$ as the generic point and $X_{\eta}$ as the generic fiber. By our convention, $\bAut(X_\eta) = \bAut(X_\eta/\eta)$ and $\bPic(X_\eta) =\bPic(X_\eta/\eta)$, and each $g\in \Aut(X/S)$ acts on $L \in \Pic(X/S)$ by $g \cdot L \coloneqq g_*L$, where the pushforward $g_*L$ is the same as $(g^{-1})^*L$.

\begin{lemma}\label{le: natural map}
Let $X \to S$ be a fibration. There exists a natural morphism
\[
\alpha:  \bAut(X_\eta) \times_\eta\bPic(X_\eta)  \to \bPic(X_\eta)
\] such that $\bAut(X_\eta)$ is a left action on $\bPic(X_\eta)$ as an $\eta$-group scheme. The same claim also holds for $\bAut(X_{\bar\eta})$ and $\bPic(X_{\bar\eta})$.
\end{lemma}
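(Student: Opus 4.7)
The plan is to construct $\alpha$ via Yoneda's lemma: I would first define the action functorially on $T$-points, then pass to the \'etale sheafification, and finally invoke representability to descend the action to a morphism of $\eta$-schemes. The group-action axioms will then follow formally from Yoneda.

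For any $\eta$-scheme $T$, I would define a set-theoretic map
\[
\alpha^\circ_T: \Aut(X_T/T) \times \bigl(\Pic(X_T)/\Pic(T)\bigr) \longrightarrow \Pic(X_T)/\Pic(T),\qquad (g,[L]) \longmapsto [(g^{-1})^*L],
\]
where $X_T = X_\eta \times_\eta T$. The only point that truly needs verification is well-definedness on the quotient: since $g$ is a $T$-automorphism, $f_T\circ g^{-1} = f_T$ (writing $f_T: X_T \to T$), hence $(g^{-1})^*(f_T^* M) = f_T^* M$ for every $M \in \Pic(T)$, and $(g^{-1})^*$ descends through the quotient by $f_T^*\Pic(T)$. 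The identity $((gh)^{-1})^* = (g^{-1})^*(h^{-1})^*$ together with the obvious compatibility under base change $T'\to T$ then shows that $\alpha^\circ$ defines a left action of the presheaf $T\mapsto\Aut(X_T/T)$ on the presheaf $T\mapsto\Pic(X_T)/\Pic(T)$.

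Next, I would \'etale-sheafify. Since sheafification is a left adjoint it preserves finite products, and since $\CAut_{X_\eta/\eta}$ is already an \'etale sheaf (being representable), $\alpha^\circ$ descends to a morphism of \'etale sheaves
\[
\CAut_{X_\eta/\eta}\times \CPic_{X_\eta/\eta,\text{(\'et)}}\longrightarrow \CPic_{X_\eta/\eta,\text{(\'et)}}.
\]
By the representability results recalled at the start of this section, the two functors on the left are represented by $\bAut(X_\eta)$ and $\bPic(X_\eta)$ respectively, and their product functor is represented by $\bAut(X_\eta)\times_\eta\bPic(X_\eta)$. Yoneda's lemma therefore yields a unique morphism of $\eta$-schemes $\alpha$ inducing the above natural transformation, and the unit and associativity diagrams for $\alpha$ follow from the corresponding identities at the presheaf level, again via Yoneda.

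The statement for the geometric generic fiber is proved identically, replacing $\eta$ by $\bar\eta$ throughout. I do not expect a genuine obstacle in this lemma; the only subtlety worth flagging is that one must pass through \'etale sheafification before identifying the presheaf quotient with $\bPic$, but because sheafification commutes with finite products and with the action at the presheaf level, this step is harmless.
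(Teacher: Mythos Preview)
Your argument is correct and follows the same Yoneda-based strategy as the paper: define the action on $T$-points and then descend via representability. The only difference is packaging---the paper carries out the sheafification step by hand (picking an \'etale cover $\{U_i\to T\}$, acting on local representatives $L_i\in\Pic(X_{U_i})/\Pic(U_i)$, and gluing), whereas you invoke the abstract fact that sheafification commutes with finite products. One small correction: this commutation holds not because sheafification is a left adjoint (left adjoints preserve colimits, and products are limits) but because sheafification on any site is left exact; the conclusion you use is right, only the stated reason is off.
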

\begin{proof}
We only establish the first claim as the second claim can be shown similarly.

Let $T$ be a locally noetherian $\eta$-scheme, then by the definition of $\bAut(X_\eta)$ and $\bPic(X_\eta)$, we have 
\[
\Hom_\eta(T, \bAut(X_\eta)) \simeq \CAut_{X_\eta}(T)= \Aut(X_T/T)
\] and 
\[
\Hom_\eta(T, \bPic(X_\eta) \simeq \CPic_{X_\eta, \text{(\'et)}}(T).
\] For $L \in \CPic_{X_\eta, \text{(\'et)}}(T)$, by the definition of the sheafification of the Picard functor in the \'etale topology \cite[Definition 2.63]{Vis05}, there exist an \'etale covering $\{\sigma_i: U_i \to T\}$ and an $L_i\in \Pic(X_{U_i})/\Pic(U_i)$ such that $L_i =\sigma_i^*L$. Note that in the above expression, we identify $\Pic(X_{U_i})/\Pic(U_i)$ as a subset of $\CPic_{X_\eta, \text{(\'et)}}(T)$ by \cite[Theorem 2.64(iii)]{Vis05} as $\CPic_{X_\eta}$ is a separated functor (see \cite[Definition 2.37]{Vis05}) in the \'etale site. Let $g\in \Aut(X_T/T)$ and $g_i \in \Aut(X_{U_i}/U_i)$ be the pull-back of $g$. Note that for $F_i \in \Pic(U_i)$, if $\pi_i: X_{U_i} \to U_i$ is the natural morphism, then $(g_i)_*(\pi^*F_i) \in \pi_i^*(\Pic(U_i))$ as $\pi_i$ is a  fibration. Therefore, $g_i$ naturally acts on $L_i$ by pushing forward the line bundle in $\Pic(X_{U_i})$ that represents $L_i$. By abusing the notation, we write this action by $(g_i)_*L_i$. Then $(g_i)_*L_i|_{U_i \times_T U_j} = (g_j)_*L_j|_{U_i \times_T U_j}$ for each $i, j$. As $\CPic_{X_\eta, \text{(\'et)}}$ is a sheaf (see \cite[Definition 2.37]{Vis05}), there exists a unique $\ti L \in \CPic_{X_\eta, \text{(\'et)}}(T)$ such that $\ti L|_{U_i} =(g_i)_*L_i$. We write $g_*L$ for $\ti L$. 

By the definition of the fiber product,
\[
\Hom_\eta(T, \bAut(X_\eta)) \times \Hom_\eta(T, \bPic(X_\eta)) \simeq \Hom_\eta(T, \bAut(X_\eta) \times_\eta \bPic(X_\eta)).
\] Therefore, the above discussion gives natural maps
\begin{equation}\label{eq: natural map}
\begin{split}
& \Hom_\eta(T, \bAut(X_\eta) \times_\eta \bPic(X_\eta)) \\
\simeq &\Hom_\eta(T, \bAut(X_\eta)) \times \Hom_\eta(T, \bPic(X_\eta))  \xrightarrow{\tau} \Hom_\eta(T, \bPic(X_\eta))
\end{split}
\end{equation}
with $\tau(g, L) = g_*L$. Take $T=\bAut(X_\eta) \times_\eta \bPic(X_\eta)$, then the identity morphism in $\Hom_\eta(\bAut(X_\eta) \times_\eta \bPic(X_\eta), \bAut(X_\eta) \times_\eta \bPic(X_\eta))$ gives the natural morphism 
\[
\alpha: \bAut(X_\eta) \times_\eta \bPic(X_\eta) \to  \bPic(X_\eta/\eta).
\]

Next, we show that $\alpha$ is a group scheme action. Let $m: \bAut(X_\eta) \times_\eta\bAut(X_\eta) \to \bAut(X_\eta)$ be the multiplication morphism for the group scheme $\bAut(X_\eta)$, and ${\text{Id}}_A, {\text {Id}}_P$ be the identity morphisms on $\bAut(X_\eta), \bPic(X_\eta)$, respectively. For a morphism $\beta: A \to B$ over $\eta$, we use $\hat\beta: \Hom_\eta(T,A) \to \Hom_\eta(T,B)$ to denote the morphism obtained by applying $\Hom_\eta(T, -)$ to $\beta$.

For $g, h \in \Aut(X_T)$, we have $g_*(h_*L) = (g\circ h)_*L$. Therefore,
\[
\begin{split}
&\Hom_\eta(T,  \bAut(X_\eta) \times_\eta\bAut(X_\eta) \times_\eta\bPic(X_\eta)) \\
\xrightarrow{{(m \times \text{Id}_{P})}^{\wedge}} &\Hom_\eta(T,  \bAut(X_\eta)\times_\eta\bPic(X_\eta)) \xrightarrow{\hat\alpha}\Hom_\eta(T,  \bPic(X_\eta))
\end{split}
\] is the same as
\[
\begin{split}
&\Hom_\eta(T,  \bAut(X_\eta) \times_\eta\bAut(X_\eta) \times_\eta\bPic(X_\eta)) \\
\xrightarrow{{(\text{Id}_{A} \times \alpha)}^{\wedge}} &\Hom_\eta(T,  \bAut(X_\eta)\times_\eta \bPic(X_\eta)) \xrightarrow{\hat\alpha} \Hom_\eta(T,  \bPic(X_\eta)).
\end{split}
\] Let $T = \bAut(X_\eta) \times_\eta\bAut(X_\eta) \times_\eta\bPic(X_\eta)$, then the image of $\text{Id}_T: T \to T$ under $\hat\alpha\circ (m \times {\text Id}_{P})^\wedge$ is the same as $\alpha \circ (m \times \text{Id}_{P} )$. Similarly, the image of $\text{Id}_T$ under $\hat\alpha \circ {(\text{Id}_{A} \times \alpha)}^\wedge$ is the same as $\alpha \circ (\text{Id}_{A} \times \alpha)$. Thus $\alpha \circ (m\times\text{Id}_{P})=\alpha \circ (\text{Id}_{A} \times \alpha)$. Besides, it is straightforward to check that 
\[
\alpha(e,-): \{e\} \times_\eta  \bPic(X_\eta) \to  \bPic(X_\eta)
\] is an isomorphism, where $e$ is the identity element of the group scheme $\bAut(X_\eta)$. Hence $\alpha$ is a group scheme action.
\end{proof}

The following generalizes Blanchard's lemma (see Lemma \ref{le: Blanchard 0}) for fibrations over a possibly non-algebraically closed field of characteristic $0$. In the sequel, we will use $(-)_F$ to denote the base extension of an object (schemes, morphisms, etc.) to the field $F$.

\begin{lemma}\label{le: Blanchard}
Let $f: X \to Y/K$ be a proper morphism of schemes over a field $K$ of characteristic $0$. Let $\bar K$ be the algebraic closure of $K$, and $f_{\bar K}: X_{\bar K} \to Y_{\bar K}$ be the base extension of $f$ to $\bar K$. Suppose that $f_{\bar K *}\Oo_{X_{\bar K}}=\Oo_{Y_{\bar K}}$, then there exists a natural group homomorphism
\[
\nu: \bAut^0(X) \to \bAut^0(Y)
\] such that its base extension to $\bar K$ is exactly the group homomorphism \eqref{eq: Blanchard} in Blanchard's lemma.

Moreover, the morphism $\nu$ commutes with the base extension for an algebraic extension $F/K$ in the following sense: if 
\begin{equation}\label{eq: mu_F}
\tau: \bAut^0(X_F) \to \bAut^0(Y_F)
\end{equation} is the natural group homomorphism, then $\tau=\nu_F$ is the base extension of $\nu$ to $F$.
\end{lemma}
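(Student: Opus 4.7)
The plan is to invoke classical Blanchard's lemma (Lemma~\ref{le: Blanchard 0}) over the algebraic closure $\bar K$ and then descend the resulting group-scheme homomorphism to $K$. The uniqueness clause in Blanchard's lemma will make the necessary Galois equivariance automatic.

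First, I would apply Lemma~\ref{le: Blanchard 0} to $f_{\bar K}\colon X_{\bar K}\to Y_{\bar K}$ and to the tautological action of $\bAut^0(X_{\bar K})$ on $X_{\bar K}$. This produces a unique action of $\bAut^0(X_{\bar K})$ on $Y_{\bar K}$ making $f_{\bar K}$ equivariant, hence a uniquely characterized group-scheme homomorphism
\[
\nu_{\bar K}\colon \bAut^0(X_{\bar K})\longrightarrow \bAut^0(Y_{\bar K}).
\]
The automorphism functor commutes with arbitrary base change, so $\bAut(X)_{\bar K}=\bAut(X_{\bar K})$, and likewise for $Y$. In characteristic $0$, the identity component of a group scheme locally of finite type over a field is geometrically irreducible and hence commutes with field extensions; so $\bAut^0(X)_{\bar K}=\bAut^0(X_{\bar K})$ and $\bAut^0(Y)_{\bar K}=\bAut^0(Y_{\bar K})$, and $\nu_{\bar K}$ becomes a morphism between base changes of $K$-group schemes. (Incidentally, $f_\ast\Oo_X=\Oo_Y$ on $Y$ follows from the analogous identity over $\bar K$ by faithfully flat descent along $\bar K/K$.)

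Next, I would use Galois descent along $\spec\bar K\to\spec K$ with $\Gamma=\operatorname{Gal}(\bar K/K)$: a morphism between base changes of $K$-schemes descends iff it is $\Gamma$-equivariant. For $\sigma\in\Gamma$, the translate $\sigma^{\ast}\nu_{\bar K}$ makes $\sigma^{\ast} f_{\bar K}=f_{\bar K}$ (the equality because $f$ is defined over $K$) equivariant, so by the uniqueness in Blanchard's lemma $\sigma^{\ast}\nu_{\bar K}=\nu_{\bar K}$. Hence $\nu_{\bar K}$ descends to the desired group-scheme homomorphism $\nu\colon \bAut^0(X)\to\bAut^0(Y)$, whose base change to $\bar K$ is $\nu_{\bar K}$ by construction. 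For the ``moreover'' claim, running the same construction over $F$ yields $\tau\colon\bAut^0(X_F)\to\bAut^0(Y_F)$ whose base change to $\bar F=\bar K$ is again $\nu_{\bar K}$; both $\tau$ and $\nu_F$ thus descend the same $\bar K$-morphism to $F$, so $\tau=\nu_F$ by uniqueness of fpqc descent.

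The one technical input that requires care is the fact that in characteristic $0$ the identity-component functor commutes with field extensions, which is what allows us to identify $\nu_{\bar K}$ as a morphism between base changes of the $K$-group schemes $\bAut^0(X)$ and $\bAut^0(Y)$. Once this is granted, the entire argument is formal: the uniqueness in Blanchard's lemma supplies Galois equivariance, and Galois descent then supplies $\nu$.
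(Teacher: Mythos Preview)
Your proposal is correct and follows essentially the same approach as the paper: apply Blanchard's lemma over $\bar K$, use the uniqueness clause to obtain Galois equivariance, and then descend. The paper verifies Galois invariance by checking on closed points via an explicit commutative diagram, cites \cite[Lemma 9.5.1 (3)]{Kle05} for the commutation of the identity component with base change, and invokes \cite[Corollary 11.2.9]{Spr09} for the descent step, but the underlying argument is the same as yours.
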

\begin{proof}
Let 
\[
\mu: \bAut^0(X_{\bar K}) \to \bAut^0(Y_{\bar K}), \quad [g] \mapsto [g_Y]
\] be the natural group homomorphism \eqref{eq: Blanchard} in Blanchard's lemma. Then $\Gal(\bar K/K)$ naturally acts on both $\bAut^0(X_{\bar K})$ and $\bAut^0(Y_{\bar K})$. We claim that $\mu$ is $\Gal(\bar K/K)$-invariant, that is, for any $[g]\in \bAut^0(X_{\bar K})$ and $\sigma \in \Gal(\bar K/K)$, we have $\mu(\sigma\cdot [g]) = \sigma\cdot \mu([g])$. Note that we have a commutative diagram
\[
\xymatrix{
X_{\bar K} \ar[d]^{f_{\bar K}} \ar[r]^g &X_{\bar K} \ar[r]^{\sigma}\ar[d]^{f_{\bar K}} & X_{\bar K}  \ar[d]^{f_{\bar K}}\\
Y_{\bar K} \ar[r]^{g_Y} & Y_{\bar K} \ar[r]^{\sigma} &Y_{\bar K}}\\
\] where the commutativity of the left square follows from Blanchard's lemma and the commutativity of the right square follows from that $f$ is defined over $K$. By the uniqueness in Blanchard's lemma, $\mu([\sigma\circ g])=[\sigma\circ g_Y]$. As $[\sigma\circ g]=\sigma\cdot [g]$ and $[\sigma\circ g_Y]=\sigma\cdot [g_Y]$, we have
\[
\mu(\sigma\cdot[g])=[\sigma\circ g_Y]=\sigma\cdot [g_Y]=\sigma\cdot \mu([g]).
\] This shows that $\mu$ is $\Gal(\bar K/K)$-invariant. As
\[
(\bAut^0(X))_{\bar K} = \bAut^0(X_{\bar K}) \text{~and~} (\bAut^0(Y))_{\bar K} = \bAut^0(Y_{\bar K})
\] by \cite[Lemma 9.5.1 (3)]{Kle05}. By the Galois descent for proper morphisms (See \cite[Corollary 11.2.9]{Spr09}), there exists a morphism
\[
\nu: \bAut^0(X) \to \bAut^0(Y)
\] such that its base extension to $\bar K$ is exactly $\mu$. Moreover, $\nu$ is a group homomorphism because $\mu$ is a group homomorphism.

To see that $\nu$ commutes with an algebraic base extension $F/K$, without loss of generality, we can assume that $F \subset \bar K$. The same argument as above shows that $\mu$ is $\Gal(\bar K/F)$-invariant and thus descents to a morphism $\bAut^0(X_F) \to \bAut^0(Y_F)$. By $\mu=\nu_{\bar K}= (\nu_F)_{\bar K}$, it must descent to $\nu_F$.
\end{proof}

Recall that $\Aut^0(X_\eta, \De_\eta) \coloneqq \bAut^0(X_\eta, \De_\eta)(\eta)$. The following lemma will be used in Section \ref{sec: fibration vs generic fiber}.

\begin{lemma}\label{le: same aut0}
Let $\phi: X \dto Y/S$ be a birational map between $\Qq$-factorial projective varieties that are isomorphic in codimension $1$. Let $\De$ be a divisor on $X$. Then there exists an isomorphism of groups
\[
\Aut^0(X_\eta, \De_\eta) \to \Aut^0(Y_\eta, \De_{Y,\eta}),\quad g \mapsto g_Y,
\] where $g_Y = \phi \circ g \circ \phi^{-1}$.
\end{lemma}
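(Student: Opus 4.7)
My plan is to construct the map $g \mapsto g_Y = \phi \circ g \circ \phi^{-1}$ at the birational level, upgrade it to an honest regular automorphism using its trivial action on $N^1$, and then invoke functoriality to land in the identity component. First I would note that conjugation by $\phi$ gives a group isomorphism $\Bir(X_\eta, \De_\eta) \to \Bir(Y_\eta, \De_{Y,\eta})$ with inverse given by conjugation by $\phi^{-1}$. Because $\phi$ is isomorphic in codimension $1$ and both $X, Y$ are $\Qq$-factorial, this isomorphism restricts to one between the subgroups of pseudo-automorphisms preserving the respective boundary supports (here I use that $\phi_*\De = \De_Y$ and $\phi^{-1}_*\phi_* = \mathrm{Id}$ on divisors, since neither map contracts divisors). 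By the symmetry of the construction, it then suffices to show that $g \in \Aut^0(X_\eta, \De_\eta)$ forces $g_Y \in \Aut^0(Y_\eta, \De_{Y,\eta})$.

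The crucial step is promoting $g_Y$ from a pseudo-automorphism to a regular automorphism. After base-changing to $\bar\eta$, the group scheme $\bAut^0(X_{\bar\eta}, \De_{\bar\eta})$ is connected (indeed an abelian variety by Theorem \ref{thm: Xu}(1)), so its pushforward action on $N^1(X_{\bar\eta})$, which lands in the discrete group ${\rm GL}(N^1(X_{\bar\eta})_\Zz)$, must be trivial. Since $\phi_{\bar\eta}$ is iso in codimension $1$, the pushforward $N^1(X_{\bar\eta}) \to N^1(Y_{\bar\eta})$ is an isomorphism conjugating $g_*$ to $(g_Y)_*$, so $(g_Y)_*$ is also trivial. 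For any ample divisor $H$ on $Y_{\bar\eta}$, the strict-transform pullback $g_Y^* H$ (well-defined as a $\Qq$-Cartier divisor by $\Qq$-factoriality together with the iso-in-codim-$1$ property of $g_Y$) is numerically equivalent to $H$, hence ample. I would then invoke the standard fact that a pseudo-automorphism of a $\Qq$-factorial projective variety whose pullback of some ample divisor is ample is itself a regular automorphism, proved by identifying sections of $|m g_Y^* H|$ with pullbacks of sections of $|mH|$ via Hartogs and using base-point-freeness for large $m$. Galois descent then yields that $g_Y$ is already regular over $\eta$, since its indeterminacy locus is a closed subscheme of $Y_\eta$ defined over $\eta$ that becomes empty after base change.

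Finally, to see that $g_Y$ sits inside the identity component $\Aut^0(Y_\eta, \De_{Y,\eta})$, I would upgrade the above construction to a morphism of $\eta$-group schemes $\bAut^0(X_\eta, \De_\eta) \to \bAut(Y_\eta, \De_{Y,\eta})$ via the functor of points: for any $\eta$-scheme $T$ and any $T$-point $g$ of the source, conjugation by $\phi_T$ produces a pseudo-automorphism of $Y_{\eta,T}/T$ preserving $\De_{Y,\eta,T}$, and it is a regular $T$-morphism because being defined everywhere is an open condition and the fiberwise assertion holds at every geometric point. Since $\bAut^0(X_\eta, \De_\eta)$ is geometrically connected (group schemes in characteristic $0$ are smooth, so the identity component is geometrically connected) and the identity maps to the identity, the image of this morphism is contained in $\bAut^0(Y_\eta, \De_{Y,\eta})$. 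Taking $\eta$-points gives the asserted group isomorphism, with inverse supplied by the symmetric construction for $\phi^{-1}$. The main obstacle I anticipate is the regularity step: the ample-pullback-implies-regular lemma for pseudo-automorphisms, and the openness argument promoting it to the relative setting in order to produce an honest scheme morphism rather than a mere bijection on $\eta$-points.
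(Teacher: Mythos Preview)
Your approach is essentially the same as the paper's: both hinge on the observation that elements of the identity component act trivially on $N^1$ (the paper phrases this as $\bar g \cdot \bar H \equiv \bar H$ since $\bar g$ and $\mathrm{Id}$ lie in the same connected component of $\bAut(X_{\bar\eta},\De_{\bar\eta})$), so the conjugate $g_Y$ sends an ample class to an ample class and is therefore a regular automorphism rather than merely a pseudo-automorphism. The only differences are in the descent steps. For passing regularity from $\bar\eta$ to $\eta$, you argue that the indeterminacy locus becomes empty after faithfully flat base change, while the paper instead rechecks ampleness of $g_Y\cdot H$ directly over $\eta$ via the cohomological criterion together with flat base change of cohomology. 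For landing in $\Aut^0$, you upgrade to a scheme morphism over $\eta$ via the functor of points and use geometric connectedness of the source; the paper instead constructs the scheme morphism over $\bar\eta$ and then invokes the identification $\bAut^0(Y_\eta)_{\bar\eta}=\bAut^0(Y_{\bar\eta})$ to conclude that an $\eta$-point whose base change lies in $\bAut^0(Y_{\bar\eta})$ already lies in $\bAut^0(Y_\eta)$. The paper's route for this last step is lighter---it sidesteps the relative regularity issue you correctly flag as the main obstacle---so you may prefer to adopt it. (One minor remark: your parenthetical appeal to Theorem~\ref{thm: Xu}(1) is unnecessary and, strictly speaking, unavailable, since the lemma carries no Calabi--Yau hypothesis; connectedness of $\bAut^0$ is all you use and all you need.)
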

\begin{proof}
First, we show that there exists an isomorphism
\[
\bAut^0(X_{\bar\eta}, \De_{\bar\eta}) \to \bAut^0(Y_{\bar\eta}, \De_{Y,{\bar\eta}}),\quad [\bar g] \mapsto [\bar g_Y], 
\] where $\bar g_Y = \bar\phi \circ \bar g \circ \bar\phi^{-1}$ with $\bar\phi: X_{\bar\eta}\to X_{\bar\eta}$ induced from $\phi$.

Let $\bar H_Y$ be an ample divisor on $Y_{\bar\eta}$ and $\bar H$ be its strict transform on $X_{\bar\eta}$. Then because $\bar g$ and $\text{Id}$ lie in the same connected component of $\bAut(X_{\bar\eta}, \De_{\bar\eta})$, we have $\bar g \cdot \bar H \equiv \bar H$ on $X_{\bar\eta}$ and thus $\bar g_{Y} \cdot \bar H_Y \equiv \bar H_Y$ on $Y_{\bar\eta}$. In particular, $\bar g_{Y} \cdot \bar H_Y$ is ample. As $\bar \phi$ is isomorphic in codimension $1$, so is $\bar g_Y$. Hence the birational map $\bar g_{Y}$ is an isomorphism. Thus we have a natural map
\[
\varphi: \bAut^0(X_{\bar\eta}, \De_{\bar\eta}) \to \bAut(Y_{\bar\eta}, \De_{Y,{\bar\eta}}),\quad [\bar g] \mapsto [\bar g_Y].
\] As $\bAut^0(X_{\bar\eta}, \De_{\bar\eta})$ is a connected group variety containing the identity, we have $\varphi(\bAut^0(X_{\bar\eta}, \De_{\bar\eta}))\subset  \bAut^0(Y_{\bar\eta}, \De_{Y,{\bar\eta}})$. The same argument on $\phi^{-1}$ gives the desired isomorphism.

By \cite[Lemma 9.5.1 (3)]{Kle05}, we have
\begin{equation}\label{eq: base extension}
\bAut^0(X_{\bar\eta}, \De_{\bar\eta}) = \bAut^0(X_{\eta}, \De_{\eta})_{\bar\eta}\text{~and~} \bAut^0(Y_{\bar\eta}, \De_{Y,{\bar\eta}}) =\bAut^0(Y_{\eta}, \De_{Y,{\eta}})_{\bar\eta}.
\end{equation} Hence, for any $g\in \Aut^0(X_{\eta}, \De_{\eta})$, we have $[\bar g] \in \bAut^0(X_{\bar\eta}, \De_{\bar\eta})$, where $\bar g$ is the base extension of $g$ on $X_{\bar\eta}$. Thus, $[\bar g_Y] \in \bAut^0(Y_{\bar\eta}, \De_{Y,{\bar\eta}})$ by the previous argument. From this, we claim that the birational map $g_Y=\phi \circ g \circ \phi^{-1}$ is indeed an isomorphism. Let $H$ be an ample divisor on $X_\eta$ and $\bar H$ be the base extension of $H$ on $X_{\bar\eta}$. Thus
\[
(g_Y\cdot H)_{\bar\eta} = \bar g_Y \cdot \bar H.
\] We claim that $g_Y\cdot H$ is also ample. Replacing $H$ by a multiple, we can assume that $g_Y\cdot H$ is Cartier. If $\Ff$ is a coherent sheaf on $X_\eta$ with $\bar \Ff$ the base extension of $\Ff$ on $X_{\bar\eta}$, then as $\bar\eta \to \eta$ is a flat base extension, by \cite[III Proposition 9.3]{Har77}, we have
\[
H^i(X_\eta, \Ff \otimes \Oo_{X_\eta}(m(g_Y \cdot H)))_{\bar\eta} \simeq H^i(X_{\bar\eta}, \bar\Ff \otimes \Oo_{X_{\bar\eta}}(m(\bar g_Y \cdot \bar H))).
\] Therefore, $g_Y \cdot H$ is ample by the cohomological criterion of ampleness (see \cite[III Proposition 5.3]{Har77}). Then $g_Y$ is an isomorphism by the same reason as before. Thus $g_Y \in \Aut(Y_\eta, \De_\eta)$. Since $[\bar g_Y] \in \bAut^0(Y_{\bar\eta}, \De_{Y,{\bar\eta}})$, we have $g_Y \in \Aut^0(Y_\eta, \De_{Y,\eta})$ by \eqref{eq: base extension}. Applying the same argument to $Y \dto X/S$, it is straightforward to see that $\Aut^0(X_\eta, \De_\eta) \to \Aut^0(Y_\eta, \De_{Y,\eta})$ is an isomorphism of groups.
\end{proof}

The following theorem is the main result of this section.

\begin{theorem}\label{thm: num to linear}
Let $f: (X,\De) \to S$ be a klt Calabi-Yau fiber space, and $\pi: X \to Y/S$ be a fibration over $S$. Suppose that $H$ is a nef and big$/S$ Cartier divisor on $Y$, and $B = \pi^*H$. Then for any Cartier divisor $\xi$ on $Y_\eta$ such that $\xi \equiv 0$, there exist $h\in \Aut^0(X_\eta,\De_\eta)$ and $\ell\in \Zz_{>0}$ such that
\[
h \cdot B_\eta - B_\eta \sim \ell \pi_{\eta}^*\xi,
\] where $\pi_\eta$ is the base extension of $\pi$ to $\eta$.
\end{theorem}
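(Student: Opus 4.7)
The plan is to deduce the statement from the algebraically-closed version on the geometric generic fiber (Theorem~\ref{thm: general divisor}) via a descent argument at the level of $\eta$-points. On $\bar\eta$, the pair $(X_{\bar\eta},\De_{\bar\eta})$ is projective klt Calabi-Yau, $\pi_{\bar\eta}\colon X_{\bar\eta}\to Y_{\bar\eta}$ is a fibration, $H_{\bar\eta}$ is nef and big, and $B_{\bar\eta}=\pi_{\bar\eta}^*H_{\bar\eta}$. Thus Theorem~\ref{thm: general divisor}(2) applies and shows that
\[
\vartheta_{B_{\bar\eta}}\colon \bAut^0(X_{\bar\eta},\De_{\bar\eta})\to \bPic^0(X_{\bar\eta}),\quad [g]\mapsto [g^*B_{\bar\eta}-B_{\bar\eta}]
\]
has image exactly $\pi_{\bar\eta}^*\bPic^0(Y_{\bar\eta})$.

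I would next descend to an $\eta$-level surjection of group schemes. By the base-change compatibility of $\bAut^0$ and $\bPic^0$ (cf.~\cite[Lemma 9.5.1]{Kle05}) combined with Lemma~\ref{le: Blanchard}, the morphism $\vartheta_{B_{\bar\eta}}$ is the base change of a natural morphism $\vartheta_{B_\eta}\colon \bAut^0(X_\eta,\De_\eta)\to \bPic^0(X_\eta)$ of $\eta$-group schemes, and $\pi_\eta^*\bPic^0(Y_\eta)\subset \bPic^0(X_\eta)$ is an $\eta$-subscheme whose base change recovers $\pi_{\bar\eta}^*\bPic^0(Y_{\bar\eta})$. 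Since the scheme-theoretic image commutes with faithfully flat base change, the image of $\vartheta_{B_\eta}$ equals $\pi_\eta^*\bPic^0(Y_\eta)$, yielding a surjection of $\eta$-group schemes
\[
\Phi\colon \bAut^0(X_\eta,\De_\eta)\twoheadrightarrow \pi_\eta^*\bPic^0(Y_\eta)
\]
with kernel $C$ of finite type over $\eta$ (smooth, as $\mathrm{char}(\eta)=0$). Since $\xi\equiv 0$, its class lies in $\bPic^\tau(Y_\eta)(\eta)$; as $\bPic^\tau/\bPic^0$ is finite, some $\ell_1\in\Zz_{>0}$ makes $\ell_1[\xi]\in \bPic^0(Y_\eta)(\eta)$, so $\pi_\eta^*(\ell_1[\xi])$ is an $\eta$-point of $\pi_\eta^*\bPic^0(Y_\eta)$.

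The main obstacle is lifting this $\eta$-point through $\Phi$ to $\Aut^0(X_\eta,\De_\eta)$. The long exact sequence in \'etale cohomology of $0\to C\to \bAut^0(X_\eta,\De_\eta)\xrightarrow{\Phi}\pi_\eta^*\bPic^0(Y_\eta)\to 0$ provides an obstruction $\delta\colon\pi_\eta^*\bPic^0(Y_\eta)(\eta)\to H^1(\eta,C)$; a restriction-corestriction argument on any finite Galois extension splitting a given class shows that $H^1(\eta,C)$ is torsion. Hence some $\ell_2\in\Zz_{>0}$ annihilates $\delta(\pi_\eta^*(\ell_1[\xi]))$, producing $g\in \Aut^0(X_\eta,\De_\eta)$ with $\vartheta_{B_\eta}(g)=\ell\pi_\eta^*[\xi]$ for $\ell=\ell_1\ell_2$. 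Setting $h=g^{-1}$ gives $h\cdot B_\eta-B_\eta=(h^{-1})^*B_\eta-B_\eta=\vartheta_{B_\eta}(g)=\ell\pi_\eta^*[\xi]$ in $\bPic(X_\eta)(\eta)$. Finally, since $\Pic(\eta)=0$ makes the natural map $\Pic(X_\eta)\hookrightarrow \bPic(X_\eta)(\eta)$ injective, this equality lifts to $\Pic(X_\eta)$ and yields the desired linear equivalence $h\cdot B_\eta-B_\eta\sim \ell\pi_\eta^*\xi$.
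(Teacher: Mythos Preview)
Your proposal is correct and follows the same overall strategy as the paper: establish the surjectivity over $\bar\eta$ via Theorem~\ref{thm: general divisor}, then descend to produce an $\eta$-point mapping to a multiple of $[\xi]$. The descent step is packaged differently. The paper constructs the $\eta$-morphism $\Uptheta_{H_\eta}\colon \bAut^0(X_\eta,\De_\eta)\to\bPic^0(Y_\eta)$ explicitly (via Lemma~\ref{le: natural map} and Lemma~\ref{le: Blanchard}), picks a preimage $\alpha$ of $[\xi_F]$ over a finite Galois extension $F/K(S)$, and then \emph{averages} $\alpha$ over $\Gal(F/K(S))$ using the abelian group law; the resulting Galois-invariant point descends to $\eta$ and maps to $|\Gal(F/K(S))|\cdot[\xi]$. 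Your obstruction-theoretic argument via $H^1(\eta,C)$ and restriction--corestriction is precisely the cohomological incarnation of this trace: the composite $\mathrm{cores}\circ\mathrm{res}$ is multiplication by the degree, which is exactly what the paper's averaging accomplishes. Your version is slightly more conceptual and applies verbatim to any surjection of commutative $\eta$-group schemes; the paper's is more hands-on and avoids invoking \'etale cohomology. Either way the integer $\ell$ arises as the degree of a splitting extension.

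One minor point where you are in fact more careful than the paper: you observe that $\xi\equiv 0$ a priori only yields $[\xi]\in\bPic^\tau(Y_\eta)(\eta)$ and pass to a multiple $\ell_1\xi$ to land in $\bPic^0$. The paper's proof tacitly needs $[\xi_{\bar\eta}]\in\bPic^0(Y_{\bar\eta})$ to invoke the surjectivity of $(\Uptheta_{H_\eta})_{\bar\eta}$ from Theorem~\ref{thm: general divisor}(1) but does not comment on this; it is harmless since the extra factor is absorbed into $\ell$.
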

\begin{proof}
As $\pi: X \to Y$ is a fibration between normal varieties, we have $\pi_*\Oo_X=\Oo_Y$. Since $\bar\eta \to S$ is a flat base extension, $r: Y_{\bar\eta} \to Y$ is also a flat morphism. Let $s: X_{\bar\eta} \to X$ and $\pi_{\bar\eta}: X_{\bar\eta} \to Y_{\bar\eta}$ be the natural morphisms. As cohomology commutes with a flat base extension, we have
\[
\pi_{\bar\eta*}(s^*\Oo_X)=r^*(\pi_*\Oo_X).
\] By $s^*\Oo_X = \Oo_{X_{\bar\eta}}, r^*\Oo_Y = \Oo_{Y_{\bar\eta}}$ and $\pi_*\Oo_X = \Oo_Y$, we have $\pi_{\bar\eta*}\Oo_{X_{\bar\eta}}= \Oo_{Y_{\bar\eta}}$. Hence the assumptions of Lemma \ref{le: Blanchard} are satisfied.

By Lemma \ref{le: natural map}, we have a natural group scheme action
\[
\bAut(Y_\eta)\times_\eta\bPic(Y_\eta) \to \bPic(Y_\eta).
\] As $[H_\eta] \in \bPic(Y_\eta)$ is an $\eta$-point, we have the corresponding morphism
\[
\alpha_{H_\eta}: \bAut(Y_\eta) \simeq \bAut(Y_\eta) \times_\eta \{[H_\eta]\} \to \bPic(Y_\eta).
\] Let 
\[
(+): \bPic(Y_\eta) \times_\eta \bPic(Y_\eta) \to  \bPic(Y_\eta)
\] be the addition of abelian schemes. As $[-H_\eta] \in \bPic(Y_\eta)$ is an $\eta$-point, we have the morphism which is ``adding by $[-H_\eta]$''
\[
 (+)_{[-H_\eta]}: \bPic(Y_\eta) \simeq\bPic(Y_\eta)\times_\eta \{[-H_\eta]\}  \xrightarrow{(+)} \bPic(Y_\eta).
 \] By Lemma \ref{le: Blanchard}, we have the natural group homomorphism
 \[
 \nu: \bAut^0(X_\eta,\De_\eta) \hookrightarrow \bAut^0(X_\eta)\to \bAut^0(Y_\eta).
 \] Composing the above morphisms, we get
 \[
  (+)_{[-H_\eta]} \circ \alpha_{H_\eta} \circ \nu: \bAut^0(X_\eta, \De_{\eta})\to\bAut^0(Y_\eta) \to \bPic(Y_\eta) \to\bPic(Y_\eta). 
 \]
 As $[{\rm Id}] \mapsto [0]$, and $\bAut^0(X_\eta, \De_{\eta})$ is connected, the image of $\bAut^0(X_\eta,\De_\eta)$ is contained in $\bPic^0(Y_\eta)$. We denote this morphism by
 \[
 \Uptheta_{H_\eta}: \bAut^0(X_\eta, \De_{\eta}) \to \bPic^0(Y_\eta). 
 \]
 
By \cite[Lemma 9.5.1 (3)]{Kle05}, we have 
\[
(\bAut^0(X_\eta, \De_{\eta}))_{\bar\eta} = \bAut^0(X_{\bar\eta}, \De_{\bar\eta}) \text{~and~} (\bPic^0(Y_\eta))_{\bar\eta} =\bPic^0(Y_{\bar\eta}).
\] Then it is straightforward to see that
\[
 (\Uptheta_{H_\eta})_{\bar\eta}: (\bAut^0(X_\eta, \De_{\eta}))_{\bar\eta}   \to(\bPic^0(Y_\eta))_{\bar\eta}, \quad [\bar g] \mapsto [\bar g_{Y_{\bar\eta}}^*H_{\bar\eta}-H_{\bar\eta}]
\] with $[g_{Y_{\bar\eta}}] = \nu_{\bar\eta}([\bar g])$ is exactly the morphism in Theorem \ref{thm: general divisor} (1). Thus $(\Uptheta_{H_\eta})_{\bar\eta}$ surjective by Theorem \ref{thm: general divisor} (1). Therefore, there exists a finite Galois extension $F/K(S)$ such that 
\[
\bAut^0(X_F, \De_F) \cap (\Uptheta_{H_F})^{-1}([\xi_F])
\] contains $(\spec F)$-points, where $X_F, \De_F, \Uptheta_{H_F}$ and $\xi_F$ correspond to $X_\eta, \De_\eta, \Uptheta_{H_\eta}$ and $\xi_\eta$ respectively after the base extension to $F$. Take any $(\spec F)$-point
\[
\alpha \in \left(\bAut^0(X_F, \De_F) \cap (\Uptheta_{H_F})^{-1}([\xi_F])\right)(F).
\] In what follows, we will construct a $g\in \Aut^0(X_\eta,\De_\eta)$ from $\alpha$.

The addition of abelian groups
\[
(+)_F: \bAut^0(X_F,\De_F) \times_{\spec F} \bAut^0(X_F,\De_F) \to \bAut^0(X_F,\De_F)
\] admits a $\Gal(F/K(S))$-action. Then $(+)_F$ is $\Gal(F/K(S))$-invariant because  $(+)_F$ is obtained by the base extension of $\bAut^0(X_\eta,\De_\eta) \times_\eta \bAut^0(X_\eta,\De_\eta) \to \bAut^0(X_\eta,\De_\eta)$ to $F$ which is defined over $K(S)$. Therefore,
\begin{equation}\label{eq: action on sum}
\sum_{i=1}^\ell \sigma \cdot \alpha_i = \sigma \cdot \left(\sum_{i=1}^\ell \alpha_i\right) \in \Aut^0(X_F,\De_F)
\end{equation} for any $\sigma \in \Gal(F/K(S))$ and $\alpha_i \in \Aut^0(X_F,\De_F)$. We emphasize that \eqref{eq: action on sum} is the summation in the abelian group $\Aut^0(X_F,\De_F)$.

Let
\begin{equation}\label{eq: g}
\ti g \coloneqq \sum_{\sigma\in \Gal(F/K(S))} \sigma\cdot\alpha \in \Aut^0(X_F,\De_F).
\end{equation} Then $[\ti g] \in \bAut^0(X_F,\De_F)$ is a $\Gal(F/K(S))$-invariant $(\spec F)$-point by \eqref{eq: action on sum}. Thus $[\ti g]$ descents to an $\eta$-point by the Galois descent (see \cite[Proposition 11.2.8]{Spr09}). To be precise, this means that there exists an $\eta$-point $[g] \in \bAut^0(X_\eta,\De_\eta)$ such that after the base extension to $F$, we have
\[
[\ti g]=[g_F]\in \bAut^0(X_F,\De_F)=\bAut^0(X_\eta,\De_\eta)_F.
\]

As usual, we still use $g$ to denote the automorphism corresponding to $[g] \in \bAut^0(X_\eta,\De_\eta)$. Because $H_\eta$ is defined over $\eta$, we have $\Uptheta_{H_F}([\sigma\cdot \alpha]) =\Uptheta_{H_F}([\alpha])=[\xi_F]$. Thus,  by \eqref{eq: g}, 
\begin{equation}\label{eq: image of ti g}
\Uptheta_{H_{F}}([\ti g]) = [\ell \xi_F],
\end{equation} where $\ell = |\Gal(F/K(S))|$. 

Let $g_{Y_\eta}\in \Aut^0(Y_\eta)$ such that $[g_{Y_\eta}] = \nu([g])$ and $\ti g_{Y_F} \in \Aut^0(Y_F)$ such that $[\ti g_{Y_F}] = \nu_F([\ti g])$. By Lemma \ref{le: Blanchard}, we have $\nu_F([\ti g]) = \nu([g])_F$, and thus
\[
\ti g_{Y_F} = (g_{Y_\eta})_F.
\] By \eqref{eq: image of ti g},
\[
\Uptheta_{H_{F}}([\ti g])=[{\ti g_{Y_F}}^*(H_{F})-H_{F}]=[\ell\xi_F] \in \bPic^0(Y_F).
\] That is,
\[
[{\ti g_{Y_F}}^*(H_{F})-H_{F}- \ell\xi_F]=0\in \bPic^0(Y_F).
\] Because
\[
[g_{Y_\eta}^*H_\eta- H_\eta-\ell \xi]_F= [{\ti g_{Y_F}}^*(H_{F})-H_{F}- \ell\xi_F]=0 \in \bPic^0(Y_F) = \bPic^0(Y_\eta)_F.
\] We see $[g_{Y_\eta}^*H_\eta- H_\eta-\ell \xi]=0\in \bPic^0(Y_\eta)$, and thus
\[
g_{Y_\eta}^*(H_{\eta})-H_{\eta}\sim \ell\xi.
\] As $\pi_\eta\circ g = g_{Y_\eta}\circ \pi_\eta $ (because it holds after base extension to $\bar \eta$ by Lemma \ref{le: Blanchard}) and $B_\eta= \pi_\eta^*(H_\eta)$, we have
\[
g^* B_\eta - B_\eta =\pi_\eta^*(g_{Y_\eta}^*(H_{\eta})-H_{\eta})\sim \ell\pi_\eta^*\xi.
\] Note that $g^*B_{\eta}= g^{-1} \cdot (B_{\eta})$ where $g^{-1} \in \Aut^0(X_\eta,\De_\eta)$, we have 
\[
g^{-1} \cdot B_\eta-B_\eta \sim \ell\pi_\eta^*\xi
\] as desired.
\end{proof}

\begin{remark}
A similar construction of $g$ appeared in \cite{Kaw97} for an ample divisor $B$. We thank Yong Hu for clarifying the argument in  \cite{Kaw97}. 
\end{remark}

\section{Relative and generic cone conjectures}\label{sec: fibration vs generic fiber}

In this section, we study the relationship between the cone conjecture of a Calabi-Yau fibration and that of its generic fiber.

Recall that a polyhedral cone is closed by definition and $\Gamma_B$ is the image of $\PsAut(X/S, \De)$ under the group homomorphism $\PsAut(X/S, \De) \to {\rm GL}(N^1(X/S)_\Rr)$. By Definition \ref{def: polyhedral type}, we set
\[
\Mov(X/S)_+ \coloneqq \Conv(\bMov(X/S) \cap N^1(X/S)_\Qq).
\] In the sequel, we need the following lemma:

\begin{lemma}\label{le: compare movable}
Let $\xi$ be a Cartier divisor on $X_\eta$. Then $\xi$ is movable on $X_\eta$ if and only if $\xi_{\bar\eta}$ is  movable on $X_{\bar\eta}$
\end{lemma}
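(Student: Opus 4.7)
The plan is to reduce the statement to a flat base change for sections together with the fact that the faithfully flat morphism $X_{\bar\eta} \to X_\eta$ preserves codimensions of closed subsets. Since movability is defined through the base locus of $|m\xi|$ for some $m \in \Zz_{>0}$, the statement reduces to showing that for every $m$ one has
\[
\Bs |m\xi_{\bar\eta}| = \Bs |m\xi| \times_{\spec K(S)} \spec \overline{K(S)}
\]
as closed subschemes of $X_{\bar\eta}$, and that this fibre product has codimension $\geq 2$ in $X_{\bar\eta}$ if and only if $\Bs|m\xi|$ has codimension $\geq 2$ in $X_\eta$.

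First I would establish the scheme-theoretic equality of base loci. Because $X_\eta$ is proper over $K(S)$ and $\spec \overline{K(S)} \to \spec K(S)$ is flat, flat base change gives a natural isomorphism
\[
H^0(X_\eta, m\xi) \otimes_{K(S)} \overline{K(S)} \xrightarrow{\;\sim\;} H^0(X_{\bar\eta}, m\xi_{\bar\eta}).
\]
Choosing a $K(S)$-basis $s_1,\dots,s_r$ of $H^0(X_\eta, m\xi)$, the pulled-back sections $\bar s_1,\dots,\bar s_r$ form an $\overline{K(S)}$-basis of $H^0(X_{\bar\eta}, m\xi_{\bar\eta})$, and the base locus on each side is cut out, as a closed subscheme, by the images of these sections under the natural maps $H^0(-, m\xi) \otimes \mathcal O(-m\xi) \to \mathcal O$. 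Hence the scheme-theoretic base locus commutes with the base change $\spec \overline{K(S)} \to \spec K(S)$, giving the displayed equality.

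Next I would compare the codimensions. The morphism $p : X_{\bar\eta} \to X_\eta$ is faithfully flat (as it is obtained from the faithfully flat extension $K(S) \to \overline{K(S)}$ by base change), and for any closed subset $Z \subset X_\eta$ one has $\codim_{X_{\bar\eta}}(p^{-1}Z) = \codim_{X_\eta}(Z)$; this follows from the dimension formula for faithfully flat morphisms whose fibres are all zero-dimensional (indeed, the fibres of $p$ over points of $X_\eta$ are spectra of finite étale $K(S)$-algebras base changed appropriately, hence zero-dimensional). Applying this with $Z = \Bs |m\xi|$ gives $\codim \Bs |m\xi_{\bar\eta}| = \codim \Bs |m\xi|$, from which both implications follow: if $\xi$ is movable, pick $m$ with $\codim \Bs|m\xi| \geq 2$, then $\xi_{\bar\eta}$ is movable via the same $m$; conversely, if $\xi_{\bar\eta}$ is movable with witness $m$, the same $m$ witnesses the movability of $\xi$.

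The main subtlety I anticipate is the codimension-preservation step: one should be careful that the fibre product of a closed subscheme with $\spec \overline{K(S)}$ over $\spec K(S)$ really does preserve codimension for a variety over $K(S)$. This is standard (it amounts to the fact that extending scalars by a separable algebraic extension of the base field does not change dimension of irreducible components), but it is the only place where the non-algebraically-closed base plays a genuine role; everything else is a formal consequence of flat base change for cohomology.
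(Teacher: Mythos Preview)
Your proposal is correct and follows essentially the same approach as the paper: both arguments use flat base change for $H^0$ to identify $({\rm Bs}|m\xi|)_{\bar\eta}$ with ${\rm Bs}|m\xi_{\bar\eta}|$, and then conclude by noting that the codimension of a closed subset is preserved under the field extension $K(S)\hookrightarrow\overline{K(S)}$. The paper's proof is terser and leaves the codimension step implicit, whereas you spell it out; one minor inaccuracy is your description of the fibres of $X_{\bar\eta}\to X_\eta$ as ``spectra of finite \'etale $K(S)$-algebras'' --- since $\overline{K(S)}/K(S)$ is an infinite algebraic extension this is not literally true, but the fibres are nonetheless zero-dimensional (being spectra of integral extensions of the residue fields), so your conclusion stands.
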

\begin{proof}
As $\bar\eta \to \eta$ is a flat base extension, for any $m\in\Zz$, we have the natural isomorphism $H^0(X_{\eta}, m\xi)_{\bar\eta} \simeq H^0(X_{\bar\eta}, m\xi_{\bar\eta})$ by by \cite[III Proposition 9.3]{Har77}. Hence $|m\xi|_{\bar\eta} = |m\xi_{\bar\eta}|$ and 
\[
({\rm Bs}|m\xi|)_{\bar\eta}={\rm Bs}|m\xi_{\bar\eta}|,
\] where ${\rm Bs}|-|$ denotes the base locus of the linear system. Then the claim follows from the definition of movable divisors.
\end{proof}

Now we are ready to prove the first main result of this paper.

\begin{proof}[Proof of Theorem \ref{thm: main 1}]
Possibly enlarge $P_\eta$, we can further assume that $P_\eta$ is a rational polyhedral cone. By Lemma \ref{le: bir=pseudoauto}, $\PsAut(X/S,\De)=\PsAut(X_\eta, \De_\eta)$, and hence we use the same notation to denote the corresponding birational maps. The argument proceeds in several steps.

Step 1.

In this step, we lift $P_\eta$ to $\Mov(X/S)$ after some modifications on $P_\eta$.

For any $[\xi] \in P_\eta$ with $\xi \geq 0$, there exists a unique $D\geq 0$ on $X$ such that $D_\eta = \xi$ and $\Supp D$ does not have vertical components. We claim that if $\xi$ is a movable divisor, then $D$ is also movable on $X/S$. 

Possibly replacing $\xi$ by a multiple, we can assume that $\codim{\rm Bs}|\xi| \geq 2$. If $\xi \sim \xi'$ on $X_\eta$, then $\xi-\xi'= \di(\alpha)$ for some $\alpha \in K(X)$. If $D'$ is a divisor such that $D'_\eta = \xi'$, then $D-D'-\di(\alpha)$ is a vertical divisor. Note that if $E$ is a prime vertical divisor, then there exists a vertical divisor $E'\geq 0$ such that $-E \sim_\Qq E'/S$. Hence, $D \sim_\Qq D'+F/S$ with $F \geq 0$ a vertical divisor. As $\codim{\rm Bs}|\xi| \geq 2$, there exists $\xi^i \geq 0, i=1,\dots, k$ such that $\xi \sim \xi^i$ and
\[
\codim_{X_\eta}(\Supp \xi \cap \Supp \xi^1 \cap \cdots \cap \Supp \xi^k) \geq 2.
\] Then the above construction gives divisors $D^i \geq 0, i=1, \ldots, k$ such that $D^i_\eta=\xi^i$ and $D \sim_\Qq D^i/S$ (note that $\Supp D^i$ may have vertical components). Because $\Supp D$ does not have vertical components, 
\[
\codim_{X}(\Supp D \cap \Supp D^1 \cap \cdots \cap \Supp D^k) \geq 2.
\] Hence $D$ is a movable divisor on $X/S$, and this shows the claim.

By Lemma \ref{le: natural maps} (4), there exists a natural injective map
\begin{equation}\label{eq: generic to geometric}
\mu: N^1(X_\eta) \to N^1(X_{\bar\eta}), \quad [\xi] \mapsto [\bar\xi]
\end{equation} that sends $\Mov(X_\eta)$ to $\Mov(X_{\bar\eta})$ (but may not be surjective). Let $P_{\bar\eta}\coloneqq\mu(P_{\eta})$ and 
\[
G \coloneqq \{g_{\bar\eta} \mid g \in \PsAut(X_{\eta}, \De_{\eta})\} \subset \PsAut(X_{\bar\eta}, \De_{\bar\eta})
\] be the subgroup. By Lemma \ref{le: inclusion implies equal} (2), we can assume that there are effective divisors $\xi^i, i=1, \ldots, k$ on $X_{\eta}$ such that

\begin{enumerate}[label=(C\arabic*),ref=C\arabic*]
\item\label{C1} each $\xi^i_{\bar\eta}$ is movable,
\item \label{C2}$\Pi_{\bar\eta} =\Cone([\xi^i_{\bar\eta}] \mid i=1, \ldots, k) \subset P_{\bar\eta}$, and
\item $G \cdot \Pi_{\bar\eta} = \left(G\cdot P_{\bar\eta} \right)\cap \Mov(X_{\bar\eta})$.
\end{enumerate}

By Lemma \ref{le: compare movable}, each $\xi^i$ is also movable. Let $D^i$ be the unique divisor on $X$ such that $D^i_\eta = \xi^i_\eta$ and $\Supp D^i$ do not have vertical components. Then $D^i$ is movable on $X/S$ by Step 1. Let
\begin{equation}\label{eq: Pi}
\Pi \coloneqq   \Cone([D^i] \mid i=1, \ldots, k) \subset \Mov(X/S)
\end{equation} be a rational polyhedral cone.

Let $\Pi_\eta$ be the image of $\Pi$ under the natural map $N^1(X/S) \to N^1(X_{\eta})$ (see Lemma \ref{le: natural maps} (1)). We claim that
\begin{equation}\label{eq: equal on eta}
\PsAut(X_\eta, \De_\eta) \cdot \Pi_\eta = \Mov(X_{\eta}).
\end{equation}
In fact, by $\PsAut(X_\eta, \De_\eta) \cdot P_\eta \supset\Mov(X_\eta)$ and $G \cdot \Pi_{\bar\eta} = \left(G\cdot P_{\bar\eta} \right)\cap \Mov(X_{\bar\eta})$, we have
\[
G \cdot \Pi_{\bar\eta} \supset \mu (\Mov(X_\eta)).
\] As $\mu$ is injective by Lemma \ref{le: natural maps}, we have 
$\PsAut(X_\eta, \De_\eta) \cdot \Pi_\eta \supset \Mov(X_\eta)$. As $\Pi_\eta \subset \Mov(X_\eta)$ by construction, we have the desired equality.

Replacing $P$ by $\Pi$, we can assume that
\begin{equation}\label{eq: P}
P=\Cone([D^i] \mid i=1, \ldots, k) \subset \Mov(X/S).
\end{equation}

Step 2.

Let $V \subset \Eff(X/S)$ be the vector space generated by vertical divisors. Our goal is to enlarge $P$ to a rational polyhedral cone $\Pi$ in such a way that a rational polyhedral subcone $Q\subset \Pi +V$ fulfills the requirement of the theorem.

First, we show that it suffices to construct a rational polyhedral cone $\Pi \subset \Eff(X/S)$ so that for any $[B] \in \Mov(X/S)_\Qq$, if $[B_\eta]\in P_\eta$, then
$[B] \in (\Aut^0(X/S,\De) \cdot \Pi) +V$.

Indeed, for any $[D] \in \Mov(X/S)_\Qq$, by the definition of $P_\eta$, there exists $g \in \PsAut(X_\eta, \De_\eta)$ and $[B_\eta] \in P_\eta$ such that $g \cdot [B_\eta] = [D_\eta]$. Thus $[(g^{-1} \cdot D)_\eta] \in P_\eta$. By the above claim, $[g^{-1} \cdot D] \in (\Aut^0(X/S,\De)\cdot\Pi)+V$. As $\Aut^0(X/S,\De) \subset  \PsAut(X/S, \De)$ and $ \PsAut(X/S, \De) \cdot V = V$, we have $[D] \in \PsAut(X/S, \De) \cdot (\Pi+V)$. This shows $\PsAut(X/S, \De) \cdot (\Pi+V) \supset \Mov(X/S)_\Qq$. By Lemma \ref{le: inclusion implies equal} (1), there exists a rational polyhedral cone $Q \subset \Pi+V$ such that
\[
\PsAut(X/S, \De) \cdot (Q \cap N^1(X/S)_\Qq) = \Mov(X/S)_\Qq.
\]

In the subsequent steps, we will proceed to construct a $\Pi$ to satisfy the desired property in Step 2. 

Step 3.

To begin, we analyze the property in Step 2 for a single effective $\Qq$-Cartier divisor $B$ such that $[B]\in\Mov(X/S)_\Qq$ and $[B_\eta]\in P_\eta$. We will construct a rational polyhedral cone $\Pi^{h_i}_B$ such that if $D$ is an effective divisor with $D_\eta \equiv B_\eta$, then
\[
[D] \in (\Aut^0(X/S,\De)\cdot \Pi^{h_i}_B)+V.
\] It suffices to do this for an $mB$ with $m\in \Zz_{>0}$. Hence, replacing $B$ by a multiple, $B$ can be assumed to be a Cartier divisor.

By Theorem \ref{thm: HX13} and Lemma \ref{le: lift to iso in codim 1}, for some $1\gg \ep>0$, $(X/S,\De+\ep B)$ has a minimal model $\phi: X \dto Y/S$ such that $\phi$ is isomorphic in codimension $1$ and $B_Y$ is semi-ample$/S$. Let $\tau: Y \to Z/S$ be the contraction induced by $B_Y$. Possibly replacing $B$ by a multiple, we can further assume that $B_Y=\tau^*H$ for an ample divisor $H$ on $Z/S$. Because $D_{Y,\eta}\equiv B_{Y,\eta}$ on $Y_\eta$, take a sufficiently small open set $U\subset S$, we see that $D_Y|_U \equiv B_Y|_U$ is nef on $Y_U/U$ by Lemma \ref{le: natural maps}. Thus, $D_Y|_U$ is semi-ample$/U$ by Theorem \ref{thm: HX13}. In particular, $D_Y|U=\tau_U^*H'$ for an ample divisor $H'$ on $Z_U/U$. This implies 
\begin{equation}\label{eq: on generic fiber}
D_{Y,\eta}-B_{Y,\eta} = \tau_\eta^*(H_\eta-H'_\eta),
\end{equation} and $H_\eta-H'_\eta \equiv 0$ on $Z_\eta$. According to Theorem \ref{thm: num to linear}, there exist $g_Y\in \Aut^0(Y_\eta, \De_{Y,\eta})$ and $k\in\Zz_{>0}$ such that
\[
g_Y\cdot B_{Y,\eta} - B_{Y,\eta}  \sim k (D_{Y,\eta}-B_{Y,\eta}).
\] As $X_\eta \dto Y_\eta$ is isomorphic in codimension 1, we have \[
\Aut^0(X_\eta, \De_\eta) \simeq \Aut^0(Y_\eta, \De_{Y\eta}), \quad g \mapsto g_Y\coloneqq \phi\circ g\circ\phi^{-1}
\] by Lemma \ref{le: same aut0}. Hence
\[
g\cdot B_{\eta} - B_{\eta} \sim k (D_{\eta}-B_{\eta}),
\] which implies
\begin{equation}\label{eq: move by g}
g\cdot B - B \sim k (D-B) \mod (\text{vertical divisors}).
\end{equation}

We claim that under the natural map (see Lemma \ref{le: natural maps} (1))
\[
\pi: \Eff(X/S)/V \hookrightarrow N^1(X/S)/V \to N^1(X_\eta),
\] the preimage of $[B_\eta]\in N^1(X_\eta)$ is
\begin{equation}\label{eq: B+N}
\pi^{-1}([B_\eta]) = [B]+N,
\end{equation} where $N$ is a finite-dimensional vector space.

In fact, replacing $X, B_\eta$ by $Y, B_{Y,\eta}$, respectively, we can assume that $B_\eta$ is semi-ample over $\eta$, and $h: X_\eta \to Z_\eta$ is the contraction defined by $B_\eta$. We will show that 
\begin{equation}\label{eq: preimage of B}
\pi^{-1}([B_\eta]) = [B]+\Span_\Rr\{[h^*\xi] \mid \xi \equiv 0 \text{~on~} Z_\eta\}/V,
\end{equation} where, by abusing notation, $[h^*\xi]$ denotes the divisor class $[\Xi] \in N^1(X/S)/V$ such that $\Xi$ is a divisor on $X$ satisfying $h^*\xi \sim_\Qq \Xi_\eta$. This $[\Xi]$ depends uniquely on $\xi$. 

Suppose that $D \geq 0$ is a $\Qq$-Cartier divisor with $D_\eta\equiv B_\eta$ on $X_\eta$ (i.e., $\pi([D])=[B_\eta]$). By the same argument as before (see \eqref{eq: on generic fiber}), we have $[D] \in [B]+\Span_\Rr\{[h^*\xi] \mid \xi \equiv 0 \text{~on~} Z_\eta\}/V$. Conversely, any $\Qq$-Cartier divisor $D$ satisfying $[D]\in [B]+\Span_\Rr\{[h^*\xi] \mid \xi \equiv 0 \text{~on~} Z_\eta\}/V$ is $\Qq$-linearly equivalent to an effective divisor (as $B_\eta$ is the pullback of an ample divisor from $Z_\eta$). This shows the claim.

Let $D_i \geq 0, i=1, \cdots, t$ be Cartier divisors on $X$ such that $D_{i,\eta}\equiv B_\eta$ and
\[
[D_i]-[B], \quad i=1, \cdots, t
\] generate $N$ (they may not necessarily be a basis), where $N$ is defined in \eqref{eq: B+N}. By \eqref{eq: move by g}, let $h_i \in \Aut^0(X_\eta,\De_\eta)$ and $k_i \in \Zz_{> 0}$ such that
\[
h_i\cdot B-B\sim k_i(D_i-B) \mod (\text{vertical divisors}), \quad i=1, \cdots, t.
\] Define a rational polytope
\begin{equation}\label{eq: Q_B}
Q_B^{h_i} \coloneqq [B] + \sum_{i=1}^t [0,1][h_i \cdot B-B] \subset N^1(X/S).
\end{equation} Then $\Pi^{h_i}_B$ is defined to be the rational polyhedral cone
\[
\Pi^{h_i}_B \coloneqq \Cone(Q_B^{h_i} ).
\] Let $\ti Q_B^{h_i}$ and $\ti \Pi^{h_i}_B\subset N^1(X/S)/V$ be the images of $Q_B^{h_i}$ and $\Pi^{h_i}_B$ under the natural quotient map $N^1(X/S) \to N^1(X/S)/V$, respectively. 

We claim that 
\begin{equation}\label{eq: property of Pi_B}
\langle h_i \mid i=1, \cdots t \rangle \cdot \ti\Pi^{h_i}_B \supset \pi^{-1}([B_\eta]),
\end{equation} where $\langle h_i \mid i=1, \cdots t \rangle \subset \Aut^0(X/S, \De)$ is the subgroup generated by $h_i, i=1, \cdots t$. In particular, \eqref{eq: property of Pi_B} implies that $\Pi^{h_i}_B$ satisfies the desired property stated at the beginning of Step 3.

As in the proof of Theorem \ref{thm: num to linear}, there exists a natural group homomorphism
\[
\bAut^0(Y_\eta, \De_{Y,\eta}) \xrightarrow{\vartheta_{B_{Y,\eta}}}  \bPic^0(Y_\eta).
\] The corresponding map on $\eta$-points
\[
\Aut^0(Y_\eta, \De_{Y,\eta}) \to \Pic^0(Y_\eta),\quad h_Y \mapsto h_Y^*B_{Y,\eta}-B_{Y,\eta}
\] is also a group homomorphism of abelian groups. As $\Aut^0(X_\eta,\De_\eta) \simeq \Aut^0(Y_\eta, \De_{Y,\eta})$ by Lemma \ref{le: same aut0},
\[
\Aut^0(X_\eta, \De_{\eta}) \to \Pic^0(X_\eta), \quad h \mapsto h^*B_{\eta}-B_{\eta}
\] is also a group homomorphism of abelian groups. Because
\[
h\cdot B_{\eta}-B_{\eta} = (h^{-1})^*\cdot B_{\eta}-B_{\eta},
\] we see that
\begin{equation}\label{eq: action is group hom}
\Aut^0(X_\eta, \De_{\eta}) \to \Pic(X_\eta), \quad h \mapsto h\cdot B_{\eta}-B_{\eta} 
\end{equation} is still a group homomorphism of abelian groups.

If $\{[D_i]-[B] \mid i=1, \cdots, s\}$ is a basis of $N$, then $\langle h_i \mid i=1, \cdots s \rangle$ acts on $[B]+N$ by translation according to \eqref{eq: action is group hom}. Hence, the polytope $\ti Q_B^{h_i}$ tiles 
\[
[B]+N=\pi^{-1}([B_\eta])
\] under the action of $\langle h_i \mid i=1, \cdots s \rangle$. See Figure \ref{fig: 1} below. This established \eqref{eq: property of Pi_B}. 

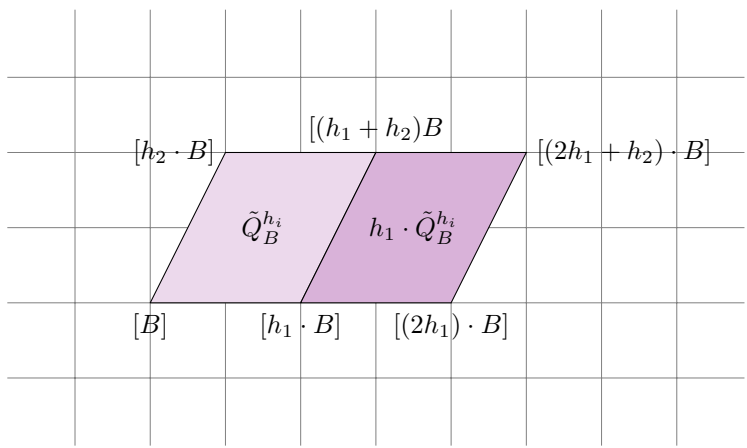
\begin{figure}[h]
  \centering
\begin{tikzpicture}
\draw[step=1cm,gray,very thin] (-1.9,-1.9) grid (7.9,3.9);
\filldraw[fill=violet!15!white, draw=black] (0,0) node[below] {\small $[B]$}-- (2,0) node[below] {\small $[h_1\cdot B]$}-- (3,2) node[pos=1, above] {\small $[(h_1+h_2)B$}--(1,2)node[left] {\small $[h_2\cdot B]$}--(0,0);
\filldraw[fill=violet!30!white, draw=black] (2,0) -- (4,0) node[below] {\small $[(2h_1)\cdot B]$}-- (5,2) node [right] {\small $[(2h_1+h_2)\cdot B]$}--(3,2)--(2,0);
\node at (1.5,1) {\small $\ti Q_B^{h_i}$};
\node at (3.5,1) {\small $h_1\cdot \ti Q_B^{h_i}$};
\end{tikzpicture}
\caption{The polytope $\ti Q_B^{h_i}$ tiles the affine space $\pi^{-1}([B_\eta]) = [B]+N$.}
\label{fig: 1}
\end{figure}

In summary, the above argument shows that as long as 
\[
[D_i]-[B], \quad i=1, \cdots, t
\] generate $N$, then
\begin{equation}\label{eq: generate W_B}
\Aut^0(X/S,\De) \cdot \ti\Pi^{h_i}_B \supset \pi^{-1}([B_\eta]).
\end{equation}

Step 4.

In this step, we globalize the construction in Step 3 for all divisors on $P_\eta$ uniformly. 

For each $g\in \Aut^0(X_\eta, \De_\eta)$ and a Cartier divisor $D$ on $X$, set
\begin{equation}\label{eq: star action}
g\star D \coloneqq [g\cdot D -D] \in N^1(X/S)_\Zz/(V\cap N^1(X/S)_\Zz).
\end{equation}

This action extends to a linear map
\[
G_g: \bigoplus_{i=1}^k \Qq \cdot D^i \to N^1(X/S)_\Qq/(V\cap N^1(X/S)_\Qq). 
\] as follows: for $D \coloneqq\sum_{i=1}^k a_i D^i$ with $a_i\in \Qq$, set
\[
G_g(\sum_{i=1}^k a_i D^i) = \sum_{i=1}^k a_i (g\star D^i) \in N^1(X/S)_\Qq/(V\cap N^1(X/S)_\Qq).
\]

As $\Aut^0(X/S, \De)=\Aut^0(X_\eta, \De_\eta)$ is an abelian group, and 
\begin{equation}\label{eq: star gp hom}
\Aut^0(X_\eta, \De_\eta) \to \Pic(X_\eta), \quad h \mapsto h\star D_\eta 
\end{equation} is a group homomorphism for a Cartier divisor $D$ (see \eqref{eq: action is group hom}). We have
\[
G_{g+h} = G_g + G_h.
\] This follows from $(G_{g+h}(D))_\eta = G_{g+h}(D_\eta)$ and
\[
G_{g+h}(D_\eta) = G_{g}(D_\eta)+G_{h}(D_\eta) \in \Pic(X_\eta).
\] Hence $G_{g+h} = G_g + G_h$.

Let $\Aut^0(X/S, \De)_\Qq \coloneqq \Aut^0(X/S, \De)\otimes_\Zz \Qq$ be a $\Qq$-vector space. For $\tau = \sum_j r_j g_j \in \Aut^0(X/S, \De)_\Qq$, set
\[
G_\tau(D)= \sum_j r_j (g_j \star D)\in N^1(X/S)/V.
\] We need to show that this is well-defined. In fact, suppose that $\tau = \sum_t r'_t g'_t \in \Aut^0(X/S, \De)_\Qq$. Choose $m \in \Zz_{>0}$ such that $mr_j, mr_t' \in \Zz$ for all $j, t$. Then $m \tau \in \Aut^0(X/S, \De)$. Hence, by the group homomorphism \eqref{eq: star gp hom},
\[
(mr_jg_j) \star D^i = mr_j(g_j \star D^i),\quad (mr_t'g_t') \star D^i = mr_t'(g_t' \star D^i), \text{~and}
\]
\[
G_{m\tau}(D) = \sum_{i=1}^k a_i((m \tau) \star D^i) = \sum_{i=1}^k a_i \left(\sum_j mr_j(g_j \star D^i)\right) = \sum_{i=1}^k a_i \left(\sum_t mr'_t(g'_t \star D^i)\right).
\] Therefore, $G_\tau$ is independent of the choice of the expression of $\tau$. Thus, we obtain a map
\[
\Aut^0(X/S, \De)_\Qq \times \bigoplus_{i=1}^k \Qq \cdot D^i \to N^1(X/S)_\Qq/V, \quad (\tau, D) \mapsto G_\tau(D),
\] and the natural corresponding map
\[
j: \Aut^0(X/S, \De)_\Qq \to \Hom_\Qq(\bigoplus_{i=1}^k \Qq \cdot D^i, N^1(X/S)_\Qq/V)
\] is linear. Choose $g_1, \cdots, g_\ell \in \Aut^0(X/S,\De)$ such that $\{G_{g_1}, \cdots, G_{g_\ell}\}$ is a basis of $j(\Aut^0(X/S, \De)_\Qq)$. By \eqref{eq: Q_B}, for each $D_i$, there is a rational polytope $Q_{D_i}^{g_j}$ associated to $g_1, \cdots, g_\ell$. Let 
\begin{equation}\label{eq: lambda}
\Lambda \coloneqq \sum_{1 \leq i \leq k}[0,1] Q_{D_i}^{g_j} \subset N^1(X/S)
\end{equation} be a rational polytope. We claim that the rational polyhedral cone
\begin{equation}\label{eq: Pi}
\Pi \coloneqq \Cone(\Lambda)
\end{equation} satisfies the requirement. Recall, this means that for any $[D] \in \Mov(X/S)_\Qq$, if $[D_\eta]\in P_\eta$, then
$[D] \in \left(\Aut^0(X/S,\De) \cdot \Pi\right) +V$. 

Let
\begin{equation}\label{eq: B}
B = \sum_i r_i D^i, \quad r_i \in [0,1]\cap \Qq.
\end{equation}
By Step 3, for each $B$, there exists $\{h_i \in \Aut^0(X/S,\De) \mid 1 \leq i \leq t\}$ such that 
\[
\{[h_i\cdot B-B]\mid i=1, \cdots, t\}
\] generates $N$. By the choice of $g_i$, we have
\[
\Span_\Qq\{G_{g_i}\mid 1 \leq i \leq \ell\} \supset\Span_\Qq\{G_{h_i}\mid 1 \leq i \leq t\}.
\] Thus
\[
\Span_\Qq\{G_{g_i}(B) \mid 1 \leq i \leq \ell\} \supset \Span_\Qq\{G_{h_i}(B)\mid 1 \leq i \leq t\}.
\] That is,
\[
\{[g_j\cdot B-B]\mid j=1, \cdots, \ell\}
\] generates $N$. Therefore, as shown by Step 3, we have
\[
\Aut^0(X/S,\De) \cdot \ti\Pi^{g_i}_B \supset \pi^{-1}([B_\eta]).
\]

Hence, to show the claim for $\Pi$, it suffices to show $\Pi \supset \Pi^{g_i}_B$ for any  $B = \sum_i r_i D^i, r_i \in \Qq_{\geq 0}$. In fact, if $[D]\in \Mov(X/S)_\Qq$ satisfies $[D_\eta] \in P_\eta$, then there exists a $B= \sum_i r_i D^i, r_i \in \Qq_{\geq 0}$ such that $D_\eta \equiv B_\eta$, that is, $[D] \in \pi^{-1}([B_\eta])$.

By \eqref{eq: B},
\[
g_j \cdot B  = \sum_i r_i (g_j \cdot D^i),
\] and thus for $0\leq \mu_j \leq 1, 1 \leq j \leq \ell$, we have
\[
B+\sum_j \mu_j(g_j \cdot B-B) =\sum_i r_i\left(D^i+\sum_j\mu_j(g_j\cdot D^i-D^i)\right).
\] By the construction of $Q^{g_i}_B$ (see \eqref{eq: Q_B}) and $\Lambda$ (see \eqref{eq: lambda}),
we have $\Pi^{g_i}_B \subset \Pi$. 

Finally, by Lemma \ref{le: inclusion implies equal} (1), there exists a rational polyhedral cone $Q \subset \Lambda\cap\Mov(X/S)$ such that $\PsAut(X,\De) \cdot (Q \cap N^1(X/S)_\Qq)= \Mov(X/S)_\Qq$. This completes the proof.
\end{proof}

\begin{remark}
Let $V, W$ be $\Qq$-vector spaces, and $O \subset \Hom_\Qq(V, W)$ be a subspace. Fix an affine space $L \subset V$. Suppose that for each $v\in L$, we have $\Span_\Qq\{Gv \mid G \in O\} = W$. This does not imply that the same property holds for $L_\Rr$. That is, there may exist $v \in L_\Rr -L$ such that $\Span_\Rr\{Gv \mid G \in O\} \neq W_\Rr$. We have the following example:

\begin{example}
Let $V=W = \Qq^2$, $O= \Span_\Qq\{
G_1=\begin{pmatrix}
1 & 2 \\
3 & 4
\end{pmatrix}, G_2=\begin{pmatrix}
1 & 0 \\
0 & 1
\end{pmatrix}\}$, and $L=\{x=-1\}$. Then one can check 
\[ 
\dim_\Rr\Span_\Rr\{Gv \mid G \in O\} = \left\{ 
\begin{array}{ll}
         1 & v=(-1, \frac{\pm\sqrt{33}-3}{4}),\\
        2 & v \in L \text{~and~} v \neq (-1, \frac{\pm\sqrt{33}-3}{4}).
\end{array} \right. 
\] 
\end{example}

Thus, we cannot obtain $\Aut^0(X/S,\De) \cdot Q = \Mov(X/S)$ from $\Aut^0(X/S,\De) \cdot (Q\cap N^1(X/S)_\Qq) = \Mov(X/S)_\Qq$.
\end{remark}

\section{Finiteness of contractions, minimal models, and the existence of weak fundamental domains}\label{sec: applications}

\subsection{Finiteness of contractions and minimal models}

The contractions of varieties are governed by the nef cones rather than movable cones. Nevertheless, the finiteness of (targets of) the contractions can be derived from the movable cone conjecture as stated in Theorem \ref{thm: finite contractions}. 

\begin{proof}[Proof of Theorem \ref{thm: finite contractions}]
Note that
\[
C\coloneqq\Cone\{g^*H \mid g: X \to Z/S, H \text{~is an ample}/S\text{~Cartier divisor on~} Z \} \subset \Mov(X/S).
\] By Lemma \ref{le: inclusion implies equal} (1), there exists a rational polyhedral cone $Q \subset \Mov(X/S)$ such that
\[
\PsAut(X/S,\De) \cdot (Q\cap N^1(X/S)_\Qq) =\Mov(X/S)_\Qq  \supset C\cap N^1(X/S)_\Qq.
\] Let $Q = \sqcup_{i=1}^m Q_i^\circ$ be the decomposition as in Theorem \ref{thm: Shokurov-Choi} such that for effective divisors $B, D$ with $[B], [D] \in Q_i^\circ$ and $1\gg\ep {> 0}$, the klt pairs $(X, \De+\ep B)$ and $(X, \De+\ep D)$ share same minimal models. Let $\sigma_i: X \dto Y_i$ be a minimal model that corresponds to $Q^\circ_i$. By Lemma \ref{le: lift to iso in codim 1}, we can further assume that $\sigma_i$ is isomorphic in codimension 1. Let $Q_i = \overline{Q_i^\circ}$ be the closure of $Q_i^\circ$, then the strict transform of the rational polyhedral cone $Q_i$, denoted by $Q^{Y_i}_i$, is contained in $\bAmp(Y_i/S)$. Hence, there are finitely many contractions $h^{i_k}_i: Y_i \to Z^{i_k}_i/S$ corresponding to the faces of $Q^{Y_i}_i$. We claim that if $g: X \to Z/S$ is a contraction, then $Z$ must be isomorphic to one of $Z^{i_k}_i$.

Let $H$ be an ample divisor on $Z$. Then $g^*H \in C$. Hence there exists $\tau \in \PsAut(X/S,\De)$ such that $\tau_*(g^*H) \in Q$. Suppose that $\tau_*(g^*H) \in Q^\circ_i$, then $\sigma_{i*}(\tau_*(g^*H))$ is nef$/S$ on $Y_i$. By Theorem \ref{thm: HX13}, $\sigma_{i*}(\tau_*(g^*H))$ is semi-ample$/S$. Thus there exists some $i_k$ such that 
\[
Z^{i_k}_i = \proj_S R(Y_i, n\sigma_{i*}(\tau_*(g^*H)))
\] for a fixed $n \in \Zz_{>0}$ such that $n\sigma_{i*}(\tau_*(g^*H))$ is Cartier. Here
\[
R(Y_i, n\sigma_{i*}(\tau_*(g^*H))) \coloneqq \bigoplus_{m\in \Zz_{\geq 0}}H^0(Y_i, \Oo_{Y_i}(mn\sigma_{i*}(\tau_*(g^*H)))).
\] Similarly, $Z = \proj_S R(X, ng^*H)$. As 
\[
\sigma_i \circ \tau: X \dto X \dto Y_i
\] is a composition of birational maps that are isomorphic in codimension 1, we have
\[
R(X, ng^*H) \simeq R(X, n\tau_*g^*H) \simeq R(Y_i, n\sigma_{i*}(\tau_*(g^*H))).
\] Therefore, $Z \simeq Z^{i_k}_i/S$.
\end{proof}

\begin{remark}\label{rmk: not the same as nef cone conj}
Theorem \ref{thm: finite contractions} is weaker than the consequence of the cone conjecture for nef cones which predicts that the contractions $\{X \to Z/S\}$ are finite up to isomorphisms on $X/S$. In the proof of Theorem \ref{thm: finite contractions}, we actually showed that $\{X \to Z/S\}$ are finite up to pseudo-automorphisms on $X/S$.
\end{remark}

\begin{proof}[Proof of Corollary \ref{cor: finite contractions generic case}]
By Theorem \ref{thm: main 1}, there exists a rational polyhedral cone $Q \subset \Mov(X/S)$ such that $\PsAut(X/S,\De) \cdot Q \supset\Mov(X/S)_\Qq$. Then the claim follows from Theorem \ref{thm: finite contractions}.
\end{proof}

Theorem \ref{thm: main 1} does not imply the existence of a weak fundamental domain directly (this will be addressed in Section \ref{subsec: fundamental domain} using deep results on the geometry of convex cones). But it at least implies the finiteness of models assuming that good minimal models exist.

\begin{proof}[Proof of Theorem \ref{thm: main 2}]
By Theorem \ref{thm: main 1}, there exists a rational polyhedral cone $Q\subset \Mov(X/S)$ such that $\PsAut(X,\De)\cdot Q \supset \Mov(X/S)_\Qq$. Let $Q = \sqcup_{i=1}^m Q_i^\circ$ be the decomposition as in Theorem \ref{thm: Shokurov-Choi} such that for effective divisors $B, D$ with $[B], [D] \in Q_i^\circ$ and $1\gg\ep {> 0}$, the klt pairs $(X, \De+\ep B)$ and $(X, \De+\ep D)$ share same minimal models. Let $\sigma_i: X \dto Y^i$ be a minimal model that corresponds to $Q^\circ_i$. By Lemma \ref{le: lift to iso in codim 1}, there exist a birational map $g_i: X \dto W/S$ that is isomorphic in codimension $1$ and a morphism $\nu: W \to Y^i$ such that $\nu\circ g_i = \sigma_i$.

First, we show that for any $\Qq$-factorial variety $W$ such that $X \dto W/S$ is isomorphic in codimension $1$, then there exists $i, 1\leq i \leq m$ such that $W \simeq W^i$. In fact, let $H$ be an ample$/S$ Cartier divisor on $W$. Let $H_X$ be the strict transform of $H$ on $X$. Then there exists $\tau \in \PsAut(X,\De)$ such that $\tau_*H_X \in Q$ and thus $\tau_*H_X \in Q^\circ_i$ for some $i, 1\leq i \leq m$. Then by the definition of $Q^\circ_i$ and $W^i$, we see that $g_{i*}(\tau_*H_X)$ is nef$/S$. Note that the nef$/S$ divisor $g_{i*}(\tau_*H_X)$ is also the strict transform of the ample$/S$ divisor $H$ through the birational maps 
\[
W \dto X \xdashrightarrow{\tau} X  \xdashrightarrow{g_i} W_i/S
\] that are all isomorphic in codimension $1$. As $W, W_i$ are both $\Qq$-factorial varieties, we have $W \simeq W_i/S$.

Next, as $g_i$ is isomorphic in codimension $1$, if $P^i_\eta$ is the strict transform of $P_\eta$ on $W^i_\eta$, then we still have
\[
\PsAut(W^i_\eta, \De_{W^i,\eta})\cdot P^i_\eta \supset  \Mov(W^i_\eta).
\] Therefore, by Corollary \ref{cor: finite contractions generic case},
\[
\{Z \mid W^i \to Z/S \text{~is a contraction,~} 1 \leq i \leq m\}
\] is a finite set. By the first part, $Y^i$ must belong to this finite set.
\end{proof}

\begin{remark}\label{rmk: terminalization}
In practice, for a klt Calabi-Yau fibration $f: (X,\De) \to S$, one can first take a terminalization $(\ti X/S, \ti \De)$ of $(X/S,\De)$. Then because a minimal model of $(X/S,\De)$ is also a minimal model of $(\ti X/S, \ti \De)$, one can apply Theorem \ref{thm: main 2} to $(\ti X/S, \ti \De)$ to obtain the finiteness of minimal models of $(X/S,\De)$.
\end{remark}

This following result generalizes both \cite{Kaw97} and \cite{FHS21} where the finiteness of minimal models are established for threefolds with $\dim(X/S) \leq 2$, and for elliptic fibrations, respectively. 

\begin{corollary}\label{cor: surface fibration}
Let $f: X \to S$ be a canonical Calabi-Yau fibration. Suppose that $\dim(X/S) \leq 2$, then $X/S$ has finitely many minimal models.
\end{corollary}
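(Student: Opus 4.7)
The plan is to reduce the statement to Theorem \ref{thm: main 2} by passing to a terminalization. Since $(X,0)$ is canonical with $K_X \sim_\Rr 0/S$, I would first take a $\Qq$-factorial terminalization $\pi : \ti X \to X$; the morphism $\pi$ is crepant, so $K_{\ti X} = \pi^*K_X \sim_\Rr 0/S$ and $(\ti X,0)$ is terminal. Thus $\ti X \to S$ is a terminal Calabi-Yau fiber space in the sense of the paper. As recorded in Remark \ref{rmk: terminalization}, a minimal model of $\ti X/S$ is a minimal model of $X/S$ after contracting the $\pi$-exceptional divisors (and conversely every minimal model of $X/S$ lifts), so it is enough to show that $\ti X/S$ has only finitely many minimal models.

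Next, I would invoke Theorem \ref{thm: main 2} for $\ti X/S$. The hypothesis on good minimal models for effective klt pairs in dimension $\le 2$ is the classical log minimal model program and the abundance theorem for log surfaces, both available in our setting. What remains is to produce a rational polyhedral cone $P_\eta \subset \Eff(\ti X_\eta)$ with
\[
\PsAut(\ti X_\eta) \cdot P_\eta \supset \Mov(\ti X_\eta).
\]

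To construct $P_\eta$, I would argue by cases on $d \coloneqq \dim(\ti X/S)$. For $d=0$ the statement is vacuous; for $d=1$, the generic fiber $\ti X_\eta$ is a smooth genus-one curve over $K(S)$ and $\Mov(\ti X_\eta)=\bAmp(\ti X_\eta)$ is a single ray, so any positive multiple of an ample class gives $P_\eta$. For $d=2$, terminal implies smooth, and $\ti X_\eta$ is a smooth projective surface over $K(S)$ with $K_{\ti X_\eta} \sim_\Qq 0$. After base change to $\overline{K(S)}$ the geometric generic fiber $\ti X_{\bar\eta}$ is, by the classification of Calabi-Yau surfaces, an abelian surface, a bielliptic surface, a K3 surface, or an Enriques surface. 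The cone conjecture for movable cones is known in each of these cases over an algebraically closed field by the work of Sterk, Kawamata, Namikawa, Totaro, and Prendergast-Smith, yielding a rational polyhedral cone $P_{\bar\eta} \subset \Eff(\ti X_{\bar\eta})$ whose $\PsAut(\ti X_{\bar\eta})$-orbit contains $\Mov(\ti X_{\bar\eta})$.

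The main obstacle is descending this cone from $\ti X_{\bar\eta}$ back to $\ti X_\eta$: the group $\PsAut(\ti X_\eta)$ acts only through its image in the Galois-invariant subgroup of $\PsAut(\ti X_{\bar\eta})$, and by Lemma \ref{le: natural maps} the map $N^1(\ti X_\eta) \hookrightarrow N^1(\ti X_{\bar\eta})$ is injective but not generally surjective. I would handle this by averaging $P_{\bar\eta}$ over a finite Galois orbit to obtain a Galois-invariant rational polyhedral cone, intersecting with the image of $N^1(\ti X_\eta)$ to descend to a rational polyhedral cone in $N^1(\ti X_\eta)$, and combining this with Lemma \ref{le: inclusion implies equal} and Lemma \ref{le: compare movable} to arrange that the result lies inside $\Eff(\ti X_\eta)$ and its $\PsAut(\ti X_\eta)$-orbit covers $\Mov(\ti X_\eta)$. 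Once $P_\eta$ is in hand, Theorem \ref{thm: main 2} applied to $\ti X/S$ yields the finiteness of minimal models for $\ti X/S$, and hence for $X/S$.
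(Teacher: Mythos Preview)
Your overall plan coincides with the paper's: pass to a $\Qq$-factorial terminalization, note that every minimal model of $X/S$ is a minimal model of $\ti X/S$ (this is the direction actually used in Remark~\ref{rmk: terminalization}; your ``contracting the $\pi$-exceptional divisors'' formulation is the wrong direction for the implication, though your parenthetical has the correct one), and then apply Theorem~\ref{thm: main 2} to $\ti X/S$. The verification of good minimal models in the relevant dimensions and the cases $d\le 1$ are handled the same way.

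The genuine gap is your treatment of the hypothesis on $P_\eta$ when $d=2$. The paper does \emph{not} pass to the geometric generic fiber and then descend; it invokes the Morrison--Kawamata cone conjecture for Calabi--Yau surfaces directly over the (non-closed) field $K(S)$, citing \cite[Remark~2.2]{Kaw97}. Your proposed Galois descent does not go through as stated: averaging $P_{\bar\eta}$ over a finite Galois orbit produces a Galois-invariant rational polyhedral cone, but that says nothing about whether the orbit of this cone under $\PsAut(\ti X_\eta)$---as opposed to the larger group $\PsAut(\ti X_{\bar\eta})$---covers $\Mov(\ti X_\eta)$. Concretely, for $[D]\in\Mov(\ti X_\eta)$ you only obtain some $g\in\PsAut(\ti X_{\bar\eta})$ with $g\cdot[D_{\bar\eta}]\in P_{\bar\eta}$, and you have provided no mechanism to replace $g$ by a pseudo-automorphism defined over $K(S)$; Lemmas~\ref{le: inclusion implies equal} and~\ref{le: compare movable} do not supply one. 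That this descent is a real issue is acknowledged immediately after the paper's proof: the extension to klt pairs is said to require the cone conjecture for terminal log Calabi--Yau surfaces over non-closed fields, which is deferred to future work.
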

\begin{proof}
Let $\nu: \ti X \to X/S$ be a terminalization of $X$. Then $X$ is a $\Qq$-factorial terminal variety and $K_{\ti X}=\nu^*K_X$. As a minimal model of $X/S$ is also a minimal model of $\ti X/S$, replacing $X$ by $\ti X$, we can assume that $f: X \to S$ is a $\Qq$-factorial terminal Calabi-Yau fibration (see Remark \ref{rmk: terminalization}). When $\dim(X_\eta)\leq 2$, there exists a rational polyhedral cone $P_\eta \subset \Eff(X_\eta)$ such that 
\[
 \PsAut(X_\eta)\cdot P_\eta =  \bMov(X_\eta) \cap \Eff(X_\eta) \supset \Mov(X_\eta).
\] In fact, this automatically holds when $\dim(X_\eta)\leq 1$, and follows from the Morrison-Kawamata cone conjecture for surfaces when $\dim(X_\eta)\leq 2$ (see \cite[Remark 2.2]{Kaw97}). Besides, it is known that good minimal models exist for effective klt pairs in dimension $\leq 3$. Then the claim follows from Theorem \ref{thm: main 2}.
\end{proof}

Corollary \ref{cor: surface fibration} is also expected to hold for klt pairs. For this purpose, it suffices to show the Morrison-Kawamata cone conjecture for terminal log Calabi-Yau surfaces over a non-algebraically closed field of characteristic $0$. This case will be addressed elsewhere.

Using Corollary \ref{cor: surface fibration}, we can show that if $\dim X - \kappa(X) \leq 2$, then $X$ admits only finitely many minimal models. Here $\kappa(X)$ is the Kodaira dimension of $X$. Note that such an $X$ may not necessarily be a Calabi-Yau variety. It is straightforward to generalize this result to the relative setting, however, we just state the absolute case for the sake of simplicity.

\begin{proof}[Proof of Corollary \ref{cor: coKodaira leq 2}]
By \cite{BCHM10}, $X$ admits a canonical model $f: X \dto S$ where $S = \proj \oplus_{m\in\Zz_{\geq 0}} H^0(X, mnK_X)$ for some $n\in \Zz_{> 0}$ such that $nK_X$ is Cartier. Moreover, $\dim S = \kappa(X)$. Let $p_1: W \to X$ be a birational morphism from a smooth variety $W$ such that $q_1 \coloneqq f \circ p_1: W \to S$ is a morphism. Moreover, $q_1$ is a fibration by the definition of the canonical model. Then $K_W=p_1^*K_X+E$ with $E \geq 0$ a $p_1$-exceptional divisor. If $X \dto Y$ is a minimal model of $X$, then it is a standard fact that $Y$ is also a minimal model of $W$ under the natural map $W \to X \dto Y$.

As $\dim(W/S)=\dim X -\kappa(X) \leq 2$, by \cite[Theorem 0.2]{Lai11}, $W$ has a good minimal model $\theta: W \dto Y/\spec \Cc$. Thus, the semi-ample divisor $K_Y$ induces the natural morphism $Y \to S$ as $S$ is also the canonical model of $Y$. Moreover, $Y$ has canonical singularities and $K_Y \sim_\Qq 0/S$. Let $\mu: W \dto Y'$ be another minimal model of $W$. Suppose that $h: T \to W$, $p: T \to Y$ and $q: T \to Y'$ are birational morphisms such that
\[
\theta \circ h=p, \quad \mu\circ h = q.
\] Then
\[
h^*K_W=p^*K_Y+E, \quad h^*K_W=q^*K_{Y'}+F,
\] where $E \geq 0$ and $F \geq 0$ are $p$-exceptional and $q$-exceptional divisors, respectively. As $K_Y$ and $K_{Y'}$ are nef, $E$ is the negative part in the Nakayama-Zariski decomposition of $p^*K_Y+E$. Similarly, $F$ is the negative part in the Nakayama-Zariski decomposition of $q^*K_{Y'}+F$. Therefore, $E=F$ and 
\begin{equation}\label{eq: same pullback}
p^*K_Y=q^*K_{Y'}.
\end{equation} In particular, $K_{Y'}$ is also semi-ample, and it incudes the natural morphism $Y' \to S$.

Let $\tau: \ti Y \to Y$ be a $\Qq$-factorial terminalization of $Y$, we claim that $\ti\mu: \ti Y \dto Y'$ does not extract divisors. Thus $Y'$ is a minimal model of $\ti Y$ and also a minimal model of $\ti Y$ over $S$. Let $\ti p: \ti T \to \ti Y$ and $q': \ti T \to Y'$ be birational morphisms from a smooth variety $\ti T$ such that $\ti\mu\circ\ti p =q'$. By \eqref{eq: same pullback},
\begin{equation}\label{eq: same pullback 2}
\ti p^*K_{\ti Y} = \ti p^*(\tau^*K_{Y})=q'^*K_{Y'}.
\end{equation} Write $K_{\ti T}=\ti p^*K_{\ti Y} +\ti E$ and $K_{\ti T}=q'^*K_{Y'} +E'$. Then $\ti E$ and $E'$ are $\ti p$-exceptional and $q'$-exceptional divisors respectively. Moreover, as $\ti Y$ has $\Qq$-factorial terminal singularities, $\Supp E = \Exc(\ti p)$. By \eqref{eq: same pullback 2}, $\ti E = E'$. Thus any $\ti p$-exceptional divisor is also $q'$-exceptional. This shows that $\ti \mu: \ti Y \to Y'$ does not extract divisors.

It follows from Corollary \ref{cor: surface fibration} that the terminal Calabi-Yau fibration $\ti Y \to S$ has only finitely many minimal models. This completes the proof.
\end{proof}

\subsection{Existence of weak rational polyhedral fundamental domains}\label{subsec: fundamental domain}

Finally, we address the existence of weak rational polyhedral fundamental domains for $\Mov(X/S)_+$. Since $\Mov(X/S)_+ \not\subset \Eff(X/S)$ in general, the existence of weak rational polyhedral fundamental domains does not imply the finiteness of minimal models by the Shokurov polytope argument. However, when $R^1f_*\Oo_X =0$, we do have $\Mov(X/S)_+ = \Mov(X/S)\subset \Eff(X/S)$ by \cite[Proposition 5.3]{LZ22}.

\begin{proof}[Proof of Theorem \ref{thm: weak fundamental domains}]
Let $W$ be the maximal vector space in $\bMov(X/S)$. By Proposition \ref{prop: defined over Q}, $W$ is defined over $\Qq$. For any set $T\subset N^1(X/S)$, let $\widetilde T$ be the image of $S$ under the quotient map $N^1(X/S) \to N^1(X/S)/W$.

By Theorem \ref{thm: main 1}, there exists a rational polyhedral cone $Q\subset \Mov(X/S)$ such that
\[
\PsAut(X,\De) \cdot (Q \cap N^1(X/S)_\Qq)= \Mov(X/S)_\Qq.
\] Therefore,
\[
\PsAut(X,\De) \cdot \widetilde Q \supset \widetilde{\Mov(X/S)}_\Qq.
\] Note that $\widetilde{\Mov(X/S)}_\Rr$ is non-degenerate and $\widetilde Q \subset  \widetilde{\Mov(X/S)}_+$. By Proposition \ref{prop: prop-def} (3), we see that $(\widetilde{\Mov(X/S)}_+, \ti \Gamma_B)$ is of polyhedral type, where $\ti \Gamma_B$ is the image of $\PsAut(X/S,\De)$ in ${\rm GL}(N^1(X/S)_\Rr/W)$. By Proposition \ref{prop: lift cone}, there is a rational polyhedral cone $\Pi \subset \Mov(X/S)_+$ such that $\Gamma_B \cdot \Pi = \Mov(X/S)_+$, and for each $\gamma \in \Gamma_B$, either $\gamma \Pi\cap \Int(\Pi) = \emptyset$ or $\gamma \Pi = \Pi$. Thus, $\Pi$ is a weak rational polyhedral fundamental domain under the action of $\Gamma_B$.
\end{proof}

\begin{corollary}\label{cor: surface fibration fundamental domain}
Let $f: X \to S$ be a terminal Calabi-Yau fibration. Suppose that $\dim(X/S) \leq 2$, then $\Mov(X/S)_+$ admits a weak rational polyhedral fundamental domain under the action of $\Gamma_B$.
\end{corollary}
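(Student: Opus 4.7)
The plan is to reduce directly to Theorem \ref{thm: weak fundamental domains} by verifying its two hypotheses for the fibration $f: X\to S$: namely, (a) existence of good minimal models of effective klt pairs in dimension $\dim(X/S)\leq 2$, and (b) existence of a polyhedral cone $P_\eta\subset \Eff(X_\eta)$ with $\PsAut(X_\eta)\cdot P_\eta \supset \Mov(X_\eta)$. The corollary then follows immediately.

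For (a), good minimal models of effective klt pairs are classical in dimensions $\leq 2$ (surfaces and curves are handled by the classical MMP). For (b), when $\dim(X/S)=0$ or $1$ the claim is essentially vacuous since $\Mov(X_\eta)$ is at most a half-line. When $\dim(X/S)=2$, the generic fiber $X_\eta$ is a terminal Calabi-Yau surface over the (not necessarily algebraically closed) field $K(S)$, and the Morrison-Kawamata cone conjecture for such surfaces provides a rational polyhedral cone $P_\eta\subset \Eff(X_\eta)$ such that $\PsAut(X_\eta)\cdot P_\eta \supset \Mov(X_\eta)$; this is the content of the remark used already in the proof of Corollary \ref{cor: surface fibration} (cf.\ \cite[Remark 2.2]{Kaw97}).

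With both hypotheses in place, Theorem \ref{thm: weak fundamental domains} applies directly to $f: X\to S$ (with $\De=0$), giving a weak rational polyhedral fundamental domain $\Pi\subset \Mov(X/S)_+$ under the action of $\Gamma_B$. The only point one might worry about is that Theorem \ref{thm: weak fundamental domains} is stated for a Calabi-Yau fiber space $(X,\De)\to S$ with $\De$ possibly nonzero; here we simply take $\De=0$, which is legitimate since $f: X\to S$ is a terminal Calabi-Yau fibration by hypothesis. No step should present any genuine obstacle — the entire argument is a bookkeeping check that Theorem \ref{thm: weak fundamental domains} applies, exactly parallel to how Corollary \ref{cor: surface fibration} is deduced from Theorem \ref{thm: main 2}.
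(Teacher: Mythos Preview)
Your proposal is correct and follows essentially the same approach as the paper: verify the two hypotheses of Theorem \ref{thm: weak fundamental domains} (existence of good minimal models in low dimension, and the polyhedral cone $P_\eta$ on the generic fiber via the surface cone conjecture from \cite[Remark 2.2]{Kaw97}), then apply that theorem with $\De=0$. The paper's proof is identical in structure, referring back to the proof of Corollary \ref{cor: surface fibration} for the verification of the generic-fiber hypothesis.
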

\begin{proof}
As mentioned in the proof of Corollary \ref{cor: surface fibration}., when $\dim(X_\eta)\leq 2$, there exists a rational polyhedral cone $P_\eta \subset \Eff(X_\eta)$ such that 
\[
 \PsAut(X_\eta)\cdot P_\eta =  \bMov(X_\eta) \cap \Eff(X_\eta) \supset \Mov(X_\eta).
\] Besides, it is known that good minimal models exist for effective klt pairs in dimension $\leq 3$. Then the claim follows from Theorem \ref{thm: weak fundamental domains}.
\end{proof}

\bibliographystyle{alpha}
\bibliography{bibfile}
\end{document}